\newtheorem{thm}{Theorem}[section]
\newtheorem{lemma}[thm]{Lemma}
\newtheorem{prop}[thm]{Proposition}
\theoremstyle{definition}
\newtheorem{rem}[thm]{Remark}
\newcommand{\N}{\mathbb{N}}
\newcommand{\Z}{\mathbb{Z}}
\newcommand{\E}{E}
\renewcommand{\P}{P}
\newcommand {\1} {\mathds{1}}
\newcommand{\Db}  {{\mathbb D}}
\newcommand{\Nb}  {{\mathbb N}}
\newcommand{\Rb}  {{\mathbb R}}
\newcommand{\cP}{\mathcal{P}}
\newcommand{\eps}{\varepsilon}
\newcommand{\om}{\omega}
\newcommand{\dd}{{\mathrm{d}}}
\numberwithin{equation}{section}
\begin{document}
\title{$\Lambda$-coalescents arising in populations with dormancy}
\author{Fernando Cordero\thanks{Technische Fakult\"at, Universit\"at Bielefeld.  Email: fcordero@techfak.uni-bielefeld.de}, Adri\'an Gonz\'alez Casanova\thanks{Instituto de Matem\'aticas, Universidad Nacional Aut\'onoma de M\'exico.  Email: adriangcs@matem.unam.mx}, \\ Jason Schweinsberg\thanks{Department of Mathematics, University of California San Diego. Email: jschweinsberg@ucsd.edu}, and Maite Wilke-Berenguer\thanks{Fakult\"at f\"ur Mathematik, Ruhr-Universit\"at Bochum. Email: maite.wilkeberenguer@ruhr-uni-bochum.de}}
\maketitle

\vspace{-.3in}

\begin{abstract}
Consider a population evolving from year to year through three seasons: spring, summer and winter.  Every spring starts with $N$ dormant individuals waking up independently of each other according to a given distribution.  Once an individual is awake, it starts reproducing at a constant rate.  By the end of spring, all individuals are awake and continue reproducing independently as Yule processes during the whole summer.  In the winter, $N$ individuals chosen uniformly at random go to sleep until the next spring, and the other individuals die.  We show that because an individual that wakes up unusually early can have a large number of surviving descendants, for some choices of model parameters the genealogy of the population will be described by a $\Lambda$-coalescent.  In particular, the beta coalescent can describe the genealogy when the rate at which individuals wake up increases exponentially over time.  We also characterize the set of all $\Lambda$-coalescents that can arise in this framework.
\end{abstract}

{\small {{\it AMS 2020 subject classifications}:  Primary 60J90;
Secondary 60J80, 92D15, 92D25}

{{\it Key words and phrases}:  seed bank, dormancy, $\Lambda$-coalescent}}

\allowdisplaybreaks

\section{Introduction}

Dormancy is a widespread evolutionary strategy. A significant proportion of all living microorganisms are in some form of latent state of life \cite{LJ}. The persistence of a latent subset of a population creates a buffer against selection \cite{GCetall, tell} and 
contributes to the maintenance of biodiversity \cite{LJ2}. Recently, probabilistic models have been successful in explaining some aspects of this facet of evolution and opening interesting questions in the fields of population genetics and beyond.  The impact of dormancy at different scales on the coalescent processes describing the genealogies of populations were investigated in \cite{ kkl,bgks,BGKW1, BGKW2}. Branching processes in random environment explain how dormancy can be selectively advantageous under fluctuating environmental conditions \cite{bhs20} while models from adaptive dynamics uncover that dormancy can arise from competition \cite{bt}.

While some microbial organisms can last dormant for millions of years \cite{mil}, most deactivation periods are much shorter. For example, mosquitos survive hostile environmental conditions by producing eggs which resist low temperatures and dry conditions \cite{mosco}. Water triggers the eclosion of the eggs, so typically the newborns have favorable weather conditions and avoid the dry season. The earlier an individual reaches the reproductive state the higher its chances of having a large number of descendants in the next generation. This induces a selective pressure favoring mosquitos that are fortunate to be in contact with water soon after the rainy season starts.

The mechanisms for leaving a dormant state have been observed to be under selective pressure not only in the case of mosquitos \cite{mosco2} but also in different contexts such as experimental evolution; see, for example, figure 2 of \cite{Lenski1} and \cite{Lag}. In such experiments, individuals reproduce until the resources are depleted, and after that some of them are sampled and propagated to fresh identical media. The process of taking some bacteria from a depleted to a fully replenished environment induces a form of latency. Similarly to the mosquitos, the bacteria undergoing the experiment are subjected to selective pressures resulting from the randomness of the activation time, and waking up rapidly from the dormant state provides an important advantage.

Wright and Vetsigian \cite{wv19} postulated that the randomness in the times when individuals emerge from a dormant state could cause the distribution of the numbers of offspring produced by different individuals to become highly skewed. Indeed, they demonstrated in their bacterial experiments that ``the heavy-tailed nature of the distribution of descendants can, in our case, be largely explained by phenotypic variability in lag time before exponential growth."  It is well established in the probability literature \cite{sag99, sch03} that heavy-tailed offspring distributions can affect the genealogical structure of the population.  Whereas the genealogy of populations under a wide range of conditions can be modeled by Kingman's coalescent \cite{king82}, in which only two lineages can merge at a time, the genealogies of populations with heavy-tailed offspring distributions are sometimes best described by coalescents with multiple mergers, also known as $\Lambda$-coalescents, which allow many ancestral lines to merge at once.  The primary aim of this paper is to describe how the randomness in the times when individuals emerge from dormancy affects the genealogy of the population and, in particular, to understand the conditions under which the genealogy is best described by a $\Lambda$-coalesent.  Wright and Vetsigian \cite{wv19} report that ``it is unlikely that the variance diverges with population size for the particular species and conditions we examined," indicating that $\Lambda$-coalescents probably do not arise in this instance. Nevertheless, we find that $\Lambda$-coalescent genealogies can appear if some rare individuals emerge from dormancy sufficiently early.

\subsection{A model involving dormancy}\label{modelsec}

We now describe a population model involving dormancy, which is very similar to the one introduced by Wright and Vetsigian in Section 2.7 of \cite{wv19}.  Note that we refer to the time period in the model as a day, which might be most appropriate when considering an evolutionary experiment involving bacteria, but in other contexts it will be more natural to think of this time period as lasting a year, and we will refer to different seasons.  The model evolves as follows.  We begin every day (or year) with a population of $N$ dormant individuals.  Each day (or year) has length $T_N$ and consists of three phases:
\begin{itemize}
\item {\bf Activation phase (Spring)}:  This phase has length $t_N$.  Each individual wakes up at some random time before $t_N$ and then starts reproducing at rate $\lambda_N$.

\item {\bf Active phase (Summer)}:  This phase has length $T_N - t_N$, and during this phase all individuals are awake and reproducing at rate $\lambda_N$.

\item {\bf Sampling phase (Winter)}:  At time $T_N$, we choose $N$ individuals uniformly at random from the population to go to sleep until the start of the next day (or year), and all other individuals die.
\end{itemize}
Note that we have compressed the winter into a single time point, whereas it may be more realistic to think of the deaths as occurring gradually over a longer winter period.  However, this will not substantially affect the results that follow.

For $d\in\Z$ and $i\in[N]\coloneqq \{1,\ldots,N\}$, we denote by $\tau_{i,N}^{(d)}$ the random time in $[0,t_N]$ when the $i$th individual at the start of day $d$ becomes active.  We assume that the random variables $\tau_{1,N}^{(d)}, \ldots, \tau_{N,N}^{(d)}$ are independent and identically distributed.  We denote by $X_{i,N}^{(d)}$ and $\nu_{i,N}^{(d)}$ the number of descendants of the $i$th individual starting day $d$ at time $T_N$ before and after sampling respectively. The total number of individuals at the end of day $d$ before sampling is then $$S_N^{(d)}\coloneqq \sum_{i\in[N]}X_{i,N}^{(d)}.$$  Note also that $\sum_{i\in[N]}\nu_{i,N}^{(d)}=N$.  Since the distributions of the $\tau_{i,N}^{(d)}$, $X_{i,N}^{(d)}$, and $\nu_{i,N}^{(d)}$ do not depend on $d$, we will frequently drop the superscripts when we are concerned only with the distributions of these quantities.  By a well-known fact about Yule processes, conditional on the activation time $\tau_{i,N}$, the random variable $X_{i,N}$ is geometrically distributed with parameter $\exp(-\lambda_N (T_N-\tau_{i,N}))$. Moreover, conditional on $(X_{1,N},\ldots, X_{N,N})$, we see that $\nu_{i,N}$ has a hypergeometric distribution with parameters $S_N$, $X_{i,N}$ and $N$.

We now explain how we will represent the genealogy of this population. Let us assume that we sample $n\in[N]$ individuals at random on day $0$. We define a discrete-time Markov chain $(\Psi_{n,N}(d))_{d=0}^\infty$ taking its values on the set of partitions $\cP_n$ of $[n]$, by letting $\Psi_{n,N}(d)$ be the partition of $[n]$ such that $i$ and $j$ are in the same block if and only if the $i$th and $j$th individuals in the sample have the same ancestor on day $-d$.  We will be interested in the asymptotic behaviour as $N\to\infty$ of this ancestral process.  The quantity
$$c_N\coloneqq \frac{ E[(\nu_{1,N})_2]}{N-1},$$
where $(\cdot)_n$ denotes the falling factorial, will play a crucial role. Note that $c_N$ is the probability that two individuals chosen uniformly at random from one generation have the same ancestor in the previous generation.  It therefore establishes the appropriate time scale on which to study the process because after scaling time by $1/c_N$, the expected time for two randomly chosen individuals to trace their lineages back to a common ancestor will equal 1.

Tools for studying the limits as $N \to \infty$ in models such as this one were developed in \cite{sag99, mosa01, sch03}.  It is well-known that when the distribution of the family sizes $\nu_{1,N}$ is highly skewed, the genealogy can sometimes be described by a $\Lambda$-coalescent. Recall that whenever $\Lambda$ is a finite measure on $[0,1]$, the $\Lambda$-coalescent, introduced independently in \cite{pit99} and \cite{sag99}, is a $\cP_n$-valued Markov process having the property that whenever there are $b$ blocks, each possible transition that involves $k$ of the blocks merging into one happens at rate $$\lambda_{b,k} \coloneqq \int_0^1 y^{k-2} (1-y)^{b-k} \: \Lambda({\rm d}y).$$  When $\Lambda$ is a unit mass at 0, we have $\lambda_{b,2} = 1$ and $\lambda_{b,k} = 0$ whenever $k \geq 3$, so we obtain Kingman's coalescent in which each pair of blocks merges at rate 1.  When $\Lambda(\{0\}) = 0$, the $\Lambda$-coalescent can be constructed from a Poisson point process on $[0, \infty) \times (0, \infty)$ with intensity ${\rm d}t \times y^{-2} \: \Lambda({\rm d}y)$, in such a way that if $(t, y)$ is a point of this Poisson point process, then at time $t$ we have a merger event in which each lineage independently participates in the merger with probability $y$.  This viewpoint is useful for understanding 
the results that follow.  In particular, when $\Lambda$ is a unit mass at one, we obtain a process known as the star-shaped coalescent in which all blocks merge after a waiting time whose distribution is exponential(1).

In Section \ref{ssec:simple}, we consider a simple case in which there is no summer, and individuals can only wake up at the very beginning and at the very end of the spring.  The genealogy of the population in this case is described in Theorem \ref{simplethm}.  In Section \ref{expsub}, we consider the case in which there is also no summer, but the rate at which individuals wake up from dormancy increases exponentially over time.  The genealogy of the population in this case is stated in Theorem \ref{expthm}.  In Section \ref{summersub}, we state Proposition~\ref{springsummer}, which gives conditions under which the summer period does not affect the genealogy of the population.  In Section \ref{charsub}, we state Theorem \ref{charthm}, which characterizes the possible $\Lambda$-coalescents that can arise as limits in this model.  After establishing some preliminary results in Section \ref{sec:cannings}, we prove Theorem~\ref{simplethm} in Section~\ref{2ptsec}, Theorem \ref{expthm} in Section~\ref{expsec}, Proposition~\ref{springsummer} in Section~\ref{summersec}, and Theorem \ref{charthm} in Section~\ref{classsec}.

\subsection{A two-point distribution for the exit time from dormancy}\label{ssec:simple}

Here we describe the limiting genealogy in a simple instance of the model introduced above in which there is no summer, meaning that $T_N = t_N$, and the random variables $\tau_{i,N}$ can take only the two values $0$ and $T_N$.  We write
\begin{equation*}
 \P(\tau_{i,N}=0)=\omega_N=1-\P(\tau_{i,N}=T_N),
\end{equation*}
where $\omega_N$ is assumed to satisfy 
\begin{equation}\label{cond0}
\lim_{N\to\infty} N\omega_N=0.
\end{equation}
Note that the progeny of individual $i$ at time $T_N$ is given by
$$X_{i,N}=b_{i,N} G_{i,N}+(1-b_{i,N}),$$
where $(b_{i,N})_{i=1}^N$ is an i.i.d. family of Bernoulli random variables with parameter $\omega_N$ and $(G_{i,N})_{i=1}^N$ is an i.i.d. family of geometric random variables with parameters $e^{-\lambda_N T_N}$ independent of $(b_{i,N})_{i=1}^N$. Let us assume in addition that
\begin{equation}\label{cond}
 \lambda_N T_N=\beta\log(\kappa N), \qquad \textup{for some }\kappa,\beta>0.
\end{equation}

We denote by $\Db_S[0, \infty)$ the set of c\`adl\`ag functions from $[0, \infty)$ to $S$, equipped with the usual Skorohod $J_1$ topology.  We then have the following result.  Throughout the paper, convergence of ancestral processes as $N \rightarrow \infty$ refers to weak convergence of stochastic processes in $\Db_{\cP_n}[0, \infty)$.

\begin{thm}\label{simplethm}
Assume that conditions \eqref{cond0} and \eqref{cond} hold.  Then:
\begin{enumerate}
\item If $\beta>1$, the processes $(\Psi_{n,N}(\lfloor t/c_N\rfloor))_{t \geq 0}$ converge as $N\to\infty$ to the star-shaped coalescent.
\item If $\beta=1$, the processes $(\Psi_{n,N}(\lfloor t/c_N\rfloor))_{t \geq 0}$ converge as $N\to\infty$ to the $\Lambda$-coalescent, with $\Lambda$ being the probability measure characterized by $y^{-2}\Lambda_\kappa({\rm d}y) := \frac{1}{\E[Y_\kappa^2]}\P(Y_\kappa \in {\rm d}y)$, where $Y_\kappa$ is a $[0,1]$-valued random variable whose distribution is determined by
\begin{equation}\label{Ydef}
\P(Y_\kappa>x)\coloneqq e^{- \frac{x}{\kappa(1-x)}},\qquad x\in[0,1].
\end{equation}
\item If $\beta<1$, the processes $(\Psi_{n,N}(\lfloor t/c_N\rfloor))_{t \geq 0}$ converge as $N\to\infty$ to Kingman's coalescent.
\end{enumerate}
\end{thm}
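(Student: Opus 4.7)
My plan is to apply a Möhle--Sagitov style convergence criterion for Cannings models (to be established in Section~\ref{sec:cannings}): convergence of the time-scaled ancestral process to the $\Lambda$-coalescent follows from the moment asymptotics
$$\lim_{N \to \infty} \frac{E[(\nu_{1,N})_k]}{N^{k-1}\, c_N} = \int_0^1 y^{k-2}\, \Lambda(dy), \qquad k \geq 2,$$
together with the no-simultaneous-mergers bound $E[(\nu_{1,N})_2 (\nu_{2,N})_2] = o(N^2 c_N)$. The hypergeometric identity $E[(\nu_{1,N})_k \mid X_{1,N},\ldots,X_{N,N}] = (X_{1,N})_k (N)_k / (S_N)_k$ reduces everything to explicit moments of the family sizes.

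The structural key is $K_N := \sum_{i=1}^N b_{i,N} \sim \text{Binomial}(N,\omega_N)$, the number of early wakers. From $N\omega_N \to 0$ one has $P(K_N \geq m) = O((N\omega_N)^m)$. Since $(X_{i,N})_k = 0$ for $k \geq 2$ whenever $b_{i,N} = 0$, the dominant contribution to $E[(X_{1,N})_k / (S_N)_k]$ comes from $\{b_{1,N}=1,\, K_N = 1\}$ (probability $\omega_N(1-\omega_N)^{N-1} \sim \omega_N$), on which $X_{1,N} = G_N$ is Geometric$(p_N)$ with $p_N = e^{-\lambda_N T_N} = (\kappa N)^{-\beta}$ and $S_N = G_N + N - 1$. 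The contribution from $\{K_N \geq 2\}$, estimated by $(X_{1,N})_k/(S_N)_k \leq G_N^k/(N-K_N)^k$ together with the binomial tail, is of relative order $N\omega_N$, hence negligible. Therefore
$$E[(\nu_{1,N})_k] \sim \omega_N N^k\, E\!\left[\frac{(G_N)_k}{(G_N + N - 1)_k}\right], \quad c_N \sim \omega_N N\, E\!\left[\frac{(G_N)_2}{(G_N + N - 1)_2}\right].$$

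The three regimes emerge from the behaviour of $G_N/N$. Since $P(G_N > xN) \approx \exp(-xNp_N) = \exp(-x\kappa^{-\beta}N^{1-\beta})$, we have $G_N/N \to \infty$ in probability when $\beta > 1$; $G_N/N \to 0$ when $\beta < 1$; and $G_N/N \Rightarrow \kappa Y$ with $Y \sim \text{Exp}(1)$ when $\beta = 1$. The ratio $(G_N)_k/(G_N+N-1)_k$ is bounded by $1$, so dominated convergence gives its $L^1$-limit in each case: the constant $1$ when $\beta > 1$; the variable $(\kappa Y/(1+\kappa Y))^k = Y_\kappa^k$ when $\beta = 1$, where setting $Y_\kappa := \kappa Y/(1+\kappa Y)$ one checks $P(Y_\kappa > x) = P(Y > x/(\kappa(1-x))) = e^{-x/(\kappa(1-x))}$ in agreement with~\eqref{Ydef}; and for $\beta < 1$ the refined bound $(G_N)_k/(G_N+N-1)_k \leq G_N^k/N^k$ with $E[G_N^k] \sim k!(\kappa N)^{k\beta}$ gives polynomial order $\Theta(N^{k(\beta-1)})$. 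Forming the moment ratio then yields the three required limits: $1 = \int y^{k-2}\,\delta_1(dy)$ for every $k \geq 2$ when $\beta > 1$ (star-shaped coalescent); $E[Y_\kappa^k]/E[Y_\kappa^2] = \int_0^1 y^{k-2}\,\Lambda_\kappa(dy)$ when $\beta = 1$, by the very definition of $\Lambda_\kappa$; and $\1\{k=2\}$ when $\beta < 1$ (the $k \geq 3$ ratio vanishing at rate $N^{(k-2)(\beta-1)}$), matching $\Lambda = \delta_0$ (Kingman). The no-simultaneous-mergers bound is checked analogously, since $(X_{1,N})_2(X_{2,N})_2 \neq 0$ forces $b_{1,N}=b_{2,N}=1$ (probability $\omega_N^2$), and the resulting ratio is $O(N\omega_N) + O(c_N)$ in all three regimes.

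The main technical obstacles are (a) the dominated-convergence step at $\beta = 1$, which promotes $G_N/N \Rightarrow \kappa Y$ to $L^1$-convergence of the moment ratios (routine thanks to the universal bound $\leq 1$), and (b) the uniform control of the $\{K_N \geq 2\}$ contribution across the three regimes, handled by the crude inequality $(X_{1,N})_k \leq (S_N)_k$ combined with $P(K_N \geq m) = O((N\omega_N)^m)$.
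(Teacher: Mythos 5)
Your proposal is correct in substance and rests on the same structural pillars as the paper's proof: the urn identity of Lemma \ref{dict}, the observation that only the event ``individual $1$ woke early and nobody else did'' contributes at leading order (probability $\sim\omega_N$), and the asymptotics of $E\big[(G_{1,N}/(G_{1,N}+N-1))^k\big]$ in the three regimes, which is exactly Lemma \ref{moments}. Where you genuinely diverge is the convergence criterion: the paper verifies the tail form, condition 3 of Theorem \ref{Cannings}, using Lemma \ref{LLNlem} to pass from $\nu_{1,N}/N$ to $X_{1,N}/S_N$ and Lemma \ref{nu12tail} for the simultaneous-merger condition, and it settles $\beta<1$ separately via M\"ohle's third-moment criterion from \cite{mo00} (Lemma \ref{c-King}); you instead verify the full family of factorial-moment conditions $E[(\nu_{1,N})_k]/(N^{k-1}c_N)\to\int_0^1 y^{k-2}\,\Lambda({\rm d}y)$ for all $k\ge 2$ plus the $(\nu_{1,N})_2(\nu_{2,N})_2$ condition. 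That criterion is legitimate (it is the M\"ohle--Sagitov formulation; the remark after Theorem \ref{Cannings} records the equivalences from \cite{mosa01} that you need), and it buys a uniform treatment of all three regimes at the cost of computing every moment rather than one second/third moment and one tail.

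One step should be tightened: the control of the contribution from your event $\{K_N\ge 2\}$. The crude bound $(X_{1,N})_k\le (S_N)_k$ invoked in your closing remark gives an error of order $\omega_N\cdot N\omega_N$, which is \emph{not} negligible against the main term $\omega_N\,\Theta(N^{k(\beta-1)})$ when $\beta<1$ (take $\omega_N=1/(N\log N)$); conversely, the refined bound $G_{1,N}^k/(N-K_N)^k$ used in the body has expected contribution of order $\omega_N\cdot N\omega_N\cdot N^{k(\beta-1)}$, which is not $o(\omega_N)$ when $\beta>1$. Either combine the two bounds (take the minimum with $1$), or use the paper's cleaner route in Lemma \ref{c1-J}: since $S_N\ge X_{1,N}+N-1$ and $G_{1,N}$ is independent of $\{\textstyle\sum_{i\ge 2}b_{i,N}\ge 1\}$, the conditional expectation of $(X_{1,N})_k/(S_N)_k$ on $\{b_{1,N}=1,\,K_N\ge 2\}$ is at most $E[(G_{1,N})_k/(G_{1,N}+N-1)_k]$, i.e.\ the same quantity as on the single-early-riser event, so the relative error is $N\omega_N$ in every regime. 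Relatedly, for $\beta<1$ your bound $(G_{1,N})_k/(G_{1,N}+N-1)_k\le G_{1,N}^k/N^k$ only yields an upper bound $O(N^{k(\beta-1)})$; since you divide by $c_N$, you also need the matching lower bound at $k=2$ (that $c_N\asymp\omega_N N^{2\beta-1}$), which is the explicit computation in part 3 of Lemmas \ref{moments} and \ref{c1-J} and must be supplied rather than asserted as a $\Theta$. With these repairs the argument goes through.
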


This result can be understood as follows.  Suppose an individual wakes up from dormancy unusually early, at time $0$ rather than at time $T_N$.  Then this individual spawns a branching process in which individuals give birth at rate $\lambda_N$, meaning that the expected number of descendants alive at time $T_N$ will be $e^{\lambda_N T_N}$.  Suppose first that $\beta = 1$.  Then the number of descendants alive at time $T_N$ can be approximately expressed as $\kappa N W$, where $W$ has an exponential(1) distribution.  Because there will also be $N-1$ individuals who wake up at time $T_N$, the fraction of the population at time $T_N$ that is descended from the individual who woke up early will be approximately $\kappa W/(\kappa W + 1)$, which means approximately a fraction $\kappa W/(\kappa W + 1)$ of the ancestral lines will merge at this time.  Indeed it is straightforward to check using (\ref{Ydef}) that we have the equality in distribution
\begin{equation}\label{eq:Y_is_W}
 Y_{\kappa} =_d \frac{\kappa W}{\kappa W + 1}.
\end{equation}

If instead $\beta < 1$, then the number of descendants at time $T_N$ of the individual who woke up early will be much smaller than $N$, making multiple mergers unlikely and giving rise to a Kingman's coalescent genealogy.  On the other hand, if $\beta > 1$, then the number of descendants of this individual will be much larger than $N$, meaning that with high probability all of the sampled ancestral lines merge, leading to a star-shaped genealogy.

\subsection{Exponentially increasing rates of exit from dormancy}\label{expsub}

We now consider a possibly more realistic scenario in which the rate at which individuals exit from dormancy increases approximately exponentially over time.  As in the model in Section \ref{ssec:simple}, we assume there is no summer, so $T_N = t_N$.  We assume that $\lambda_N = \lambda > 0$ for all $N$.  We also assume that $T_N - \tau_{i,N} = \zeta_i \wedge T_N$, where $(\zeta_i)_{i=1}^N$ is a sequence of i.i.d. random variables whose distribution does not depend on $N$.  We assume there exist constants $\gamma > 0$ and $c > 0$ such that, using $\sim$ to denote that the ratio of the two sides tends to one, we have
\begin{equation}\label{zetatail}
\P(\zeta_1 > y) \sim  c e^{-\gamma y}, \quad\textrm{as $y\to\infty$,}
\end{equation}
Finally, we assume that
\begin{equation}\label{zetaTN}
  \lim_{N\to\infty}\frac{\log N}{T_N}=0.
\end{equation}
We obtain the following result.

\begin{thm}\label{expthm}
Assume that conditions (\ref{zetatail}) and (\ref{zetaTN}) hold, and let $a = \gamma/\lambda$. Then:
\begin{enumerate}
\item If $a \geq 2$, the processes $(\Psi_{n,N}(\lfloor t/c_N\rfloor))_{t \geq 0}$ converge as $N\to\infty$ to Kingman's coalescent.
\item If $1 \leq a < 2$, the processes $(\Psi_{n,N}(\lfloor t/c_N\rfloor))_{t \geq 0}$ converge as $N\to\infty$ to the $\Lambda$-coalescent in which $\Lambda$ is the Beta$(2-a, a)$ distribution.
\item If $0< a < 1$, the processes $(\Psi_{n,N}(\lfloor d \rfloor))_{d =0}^{\infty}$ converge as $N\to\infty$ to the discrete-time $\Xi_{a}$-coalescent described in Theorem 4(d) of \cite{sch03}.
 \end{enumerate}
\end{thm}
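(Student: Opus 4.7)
The plan is to reduce Theorem~\ref{expthm} to Cannings-model convergence criteria of the sort developed in \cite{mosa01, sch03} and presumably packaged in Section~\ref{sec:cannings}, and then carry out the moment asymptotics dictated by the tail of $X_{i,N}$. Throughout, the relevant object is the family-size vector $(\nu_{i,N})_{i\in[N]}$, obtained from the i.i.d.\ variables $X_{i,N}$ via hypergeometric sampling from $S_N=\sum_i X_{i,N}$.

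The first step is to establish the tail of $X_{1,N}$. Conditional on $\zeta_1$, $X_{1,N}$ is geometric with parameter $e^{-\lambda(\zeta_1\wedge T_N)}$, so $\P(X_{1,N}>k\mid\zeta_1)\approx\exp(-k e^{-\lambda\zeta_1})$ in the regime $k\ll e^{\lambda T_N}$. Combining with \eqref{zetatail} and the substitution $u=ke^{-\lambda z}$ gives
\[
\P(X_{1,N}>k)\sim \frac{c\gamma\,\Gamma(a)}{\lambda}\,k^{-a},\qquad 1\ll k\ll e^{\lambda T_N},
\]
with $a=\gamma/\lambda$; by \eqref{zetaTN}, $e^{\lambda T_N}$ is superpolynomial in $N$, so the cutoff is relevant only when $a\le 1$. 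For $a>1$, $\mu=\E X_{1,N}$ is bounded in $N$, so $S_N/N\to\mu$ in probability with good moment control; for $a\le 1$, $S_N$ instead requires $a$-stable normalization by $b_N=N^{1/a}$ (with a logarithmic correction at $a=1$). A direct computation of $c_N=N\,\E[(X_{1,N})_2/(S_N)_2]$ then yields $c_N\sim C/N$ for $a>2$, $c_N\asymp N^{-(2-a)}$ for $1<a<2$, logarithmic corrections at $a=2$, and $c_N$ of constant order for $a<1$, matching the time scalings used in each part of the theorem.

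For part~(i), $a\ge 2$, I would verify the Möhle--Sagitov criterion for Kingman's coalescent: $c_N\to 0$ together with $\E[(\nu_{1,N})_3]/(N^2 c_N)\to 0$. Writing $\E[(\nu_{1,N})_3]=(N)_3\,\E[(X_{1,N})_3/(S_N)_3]$ and truncating $X_{1,N}$ at its natural heavy-tail scale, the third moment is of smaller order than $N^2 c_N$ because for $a\ge 2$ no single family yields a macroscopic fraction of the population. For part~(ii), $1\le a<2$, I would instead identify $\Lambda$ via the moment/tail criterion of \cite{mosa01}: it suffices to show that $N\,\P(\nu_{1,N}/N\in{\rm d}y)/(c_N\, y^2)$ converges as a measure on $(0,1]$ to $\Lambda({\rm d}y)/y^2$ with $\Lambda=\mathrm{Beta}(2-a,a)$. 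Using $\nu_{1,N}/N\approx X_{1,N}/S_N$, the heavy tail of $X_{1,N}$ contributes the $y^{1-a}$ factor of the Beta density, while the $(1-y)^{a-1}$ factor emerges from the joint behaviour: conditional on $X_{1,N}$ being macroscopic, $S_N-X_{1,N}$ still concentrates around $(N-1)\mu$, so $X_{1,N}/(X_{1,N}+(N-1)\mu)$ has precisely the required shape after the change of variables.

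For part~(iii), $0<a<1$, the sum $S_N$ is dominated by its largest summands: the point process $\{X_{i,N}/b_N:i\in[N]\}$ converges to a Poisson point process on $(0,\infty)$ with intensity proportional to $y^{-a-1}\,{\rm d}y$, and $S_N/b_N$ converges jointly to the sum of these points, a positive $a$-stable variable. Hence the ranked ratios $(X_{(i),N}/S_N)_{i\ge 1}$ converge to a Poisson--Dirichlet$(a,0)$ sequence on the Kingman simplex, and hypergeometric sampling (with $S_N\gg N$) preserves this structure in the limit, producing exactly the family-size input of the discrete-time $\Xi_a$-coalescent of Theorem~4(d) of \cite{sch03}. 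The main obstacle, I expect, is the joint asymptotics of $X_{1,N}$ and $S_N$ in parts (ii) and (iii) --- especially at the boundary $a=1$, where $S_N$ picks up logarithmic corrections and the limit is the Bolthausen--Sznitman coalescent Beta$(1,1)$, and at $a<1$ where $S_N$ itself is genuinely stable --- since one must simultaneously track $X_{1,N}$ and $S_N-X_{1,N}$ to extract the precise Beta$(2-a,a)$ density and the $\Xi_a$-measure.
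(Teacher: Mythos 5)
Your proposal is sound in outline but follows a genuinely different, and much heavier, route than the paper. The paper does not redo any of the moment or tail analysis in the three regimes: it introduces a second model in which the truncation of $\zeta_i$ at $T_N$ is removed (so $\tilde X_i$ is geometric with parameter $e^{-\lambda\zeta_i}$), observes that the two offspring vectors coincide unless some $\zeta_i>T_N$ (Lemma \ref{coupling}), and shows in Proposition \ref{equiva} that under $N^{r+2}\P(\zeta_1>T_N)\to0$ --- which is where \eqref{zetaTN} enters, playing the role of your informal ``superpolynomial cutoff'' remark --- the quantity $c_N$ and all normalized joint factorial moments $E[(\nu_{1,N})_{k_1}\cdots(\nu_{r,N})_{k_r}]/(N^{k_1+\cdots+k_r-r}c_N)$ have the same limits in both models. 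It then computes the tail of $\tilde X_1$ by Karamata's Tauberian theorem (Lemma \ref{l-tail}), obtaining the same constant $c\,\Gamma(1+a)$ that you found, and simply invokes Theorem 4 of \cite{sch03} together with Theorem 2.1 of \cite{mosa01} to settle all three cases at once. What you propose instead --- verifying M\"ohle's criterion for $a\ge2$, the tail criterion with the Beta$(2-a,a)$ identification for $1\le a<2$, and the stable/Poisson--Dirichlet paintbox analysis for $0<a<1$ --- amounts to re-deriving that cited theorem for the truncated model; it would work and is more self-contained, but the parts you yourself flag as the main obstacle ($a=1$, where $E[X_{1,N}]$ diverges and $S_N$ needs a truncated-mean normalization, and $a<1$, where the full M\"ohle--Sagitov classification is needed to obtain the discrete-time $\Xi_a$-limit) are exactly the work the paper avoids repeating.

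Two corrections/caveats. First, your claimed order $c_N\asymp N^{-(2-a)}$ for $1<a<2$ is wrong: since $E[X_{1,N}^2\1_{\{X_{1,N}\le N\}}]\asymp N^{2-a}$ and $S_N\approx N\mu$, one gets $c_N\asymp N\cdot N^{2-a}/N^2=N^{-(a-1)}$; your formula would make $c_N$ of order one as $a\uparrow 2$, which cannot be right. This slip does not break the structure of the argument, because the criteria you invoke are self-normalizing in $c_N$, but it signals that the ``direct computation'' was not actually carried out. Second, for part (ii) you should also record that condition 2 of Theorem \ref{Cannings} (no two simultaneously macroscopic families) must be checked; in this sampling-from-i.i.d.\ framework it comes for free from Lemma \ref{nu12tail} once $c_N\to0$, and for part (i) your route via $E[(\nu_{1,N})_3]/(N^2c_N)\to0$ is indeed the one the paper itself uses elsewhere (Lemma \ref{c-King}, citing \cite{mo00}).
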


To understand this result, consider the distribution of the random variables $X_{i,N}$.  If we disregard the truncation at $T_N$, which will turn out to have minimal effect, then conditional on $\zeta_i = u$, the distribution of $X_{i,N}$ is geometric with parameter $e^{-\lambda u}$.   We assume for simplicity that the distribution of $\zeta_i$ is exactly exponential with rate $\gamma$. Then, making the change of variables $s = e^{-u \lambda}$ and using Stirling's approximation in the last step, we obtain
$$\P(X_{i,N}>k)=\int_0^{\infty}(1-e^{-u\lambda})^k\gamma e^{-\gamma u}{\rm d}u= a \int_{0}^{1}(1-s)^ks^{a-1}{\rm d}s= \frac{a \Gamma(k+1) \Gamma(a)}{\Gamma(k+1+a)} \sim \Gamma(a+1) k^{-a}.$$  Therefore, this result fits into the framework of
Theorem 4 of \cite{sch03}, where it was established that beta coalescents describe the limiting genealogies in populations with these heavy-tailed offspring distributions.  Note also that when $0 < a < 1$, there will be multiple individuals in each day whose descendants comprise a substantial fraction of the population, which is why the limit of the ancestral processes is not a $\Lambda$-coalescent but rather a discrete-time process in which multiple groups of lineages merge in each time step.

\subsection{The effect of the summer on the genealogy}\label{summersub}
In Sections \ref{ssec:simple} and \ref{expsub}, we assumed there was no summer. We now consider how the inclusion of a summer period impacts the genealogy of the population.  For this, we compare two populations whose spring has length $t_N$ and whose reproduction rate is $\lambda_N$, which are subject to the same activation times.  One population has a summer of length $T_N - t_N>0$, and the other one has no summer, so that years have length $t_N$.
Let $(\hat{\Psi}_{n,N}(d))_{d=0}^\infty$ denote the ancestral process associated with a sample of $n$ individuals at time $0$ from the population without a summer, and let $(\Psi_{n,N}(d))_{d=0}^\infty$ denote the corresponding ancestral process for the model with a summer.  We obtain the following result.

\begin{prop}\label{springsummer}
Assume that there is a sequence $(\rho_N)_{N=1}^{\infty}$ of positive numbers such that $(\hat{\Psi}_{n,N}(\lfloor \rho_N t \rfloor))_{t\geq 0}\Rightarrow (\Psi_n(t))_{t\geq 0}$ in $\Db_{\cP_n}[0, \infty)$, where $\Psi_n$ is a continuous-time Markov chain with values in $\cP_n$.  For the model with a summer, let $Y_{i,N}$ denote the number of descendants of the $i$th individual at the beginning of a given season who are alive at time $t_N$, and assume that
\begin{equation}\label{H}
\lim_{N \rightarrow \infty} \rho_N E\left[\frac{1}{ Y_{1,N}+\cdots+Y_{N,N}}\right] = 0.
\end{equation}
Then 
$$
({\Psi}_{n,N}(\lfloor \rho_N t\rfloor))_{t\geq 0}\Rightarrow (\Psi_n(t))_{t\geq 0}.
$$
\end{prop}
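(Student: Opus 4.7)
The plan is to couple $(\Psi_{n,N}(d))_{d\geq 0}$ and $(\hat\Psi_{n,N}(d))_{d\geq 0}$ on a common probability space by sharing the activation times and the spring Yule processes, and to show that with probability tending to one the two coupled processes coincide throughout $0\leq d\leq \lfloor \rho_N T\rfloor$, for each $T>0$.  Condition \eqref{H} will supply the required smallness of the discrepancy event, so that the assumed weak convergence of the rescaled no-summer process transfers to the rescaled summer process.

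The first step is a one-step total variation bound of the form
\[
d_{\mathrm{TV}}\bigl(K_N(\pi,\cdot),\hat K_N(\pi,\cdot)\bigr)\leq \P(E_N^c)\qquad\forall\,\pi\in\cP_n,
\]
where $K_N,\hat K_N$ are the one-step transition kernels on $\cP_n$ and $E_N$ is the event, in the summer model, that the (at most $n$) lineages being traced back have pairwise distinct $t_N$-ancestors (``no summer coalescence'').  The crucial ingredient is the following exchangeability observation.  Condition on the spring outcomes $(Y_{j,N})_{j=1}^N$ and label the $M_N:=Y_{1,N}+\cdots+Y_{N,N}$ individuals alive at $t_N$ as $k=1,\ldots,M_N$; let $X_k$ be the number of descendants of $k$ at time $T_N$, so that $(X_k)_k$ is i.i.d.\ and independent of the spring.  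Uniform sampling without replacement of $n$ individuals from $[S_N]$, with $S_N=\sum_k X_k$, gives conditional probability $\prod_{k\in T}X_k/\binom{S_N}{n}$ that the set of $t_N$-ancestors equals a given $n$-subset $T\subset[M_N]$.  By $k$-exchangeability of $(X_k)$, the expectation of this quantity is constant over such $T$, so conditional on $(Y_{j,N})$ and on $E_N$ the set of $n$ $t_N$-ancestors is uniformly distributed over $n$-subsets of $[M_N]$ --- exactly its distribution in the no-summer model.  Since the partition induced on $[n]$ depends only on the common spring's family labels, the two one-step kernels agree on $E_N$ conditional on $(Y_{j,N})$, and the claimed TV bound follows by integrating.

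A union bound over pairs of lineages gives $\P(E_N^c)\leq\binom{n}{2}\,E\bigl[\sum_k X_k(X_k-1)/(S_N(S_N-1))\bigr]$.  Using the decomposition $\sum_k X_k^2/S_N^2=1/M_N+\sum_k(X_k-\bar X)^2/S_N^2$ together with the bounded coefficient of variation of the geometric offspring law, one obtains an absolute constant $C$ with $\P(E_N^c)\leq C n^2\, E[1/M_N]$.  Combining this with a maximal coupling at each step yields
\[
\P\bigl(\Psi_{n,N}(d)=\hat\Psi_{n,N}(d)\text{ for all }0\leq d\leq\lfloor \rho_N T\rfloor\bigr)\geq 1-Cn^2\,T\,\rho_N\, E[1/M_N]\longrightarrow 1
\]
by \eqref{H}.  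The weak convergence of $\hat\Psi_{n,N}$ then transfers to $\Psi_{n,N}$ on every compact time interval, and hence in $\Db_{\cP_n}[0,\infty)$.

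The principal obstacle I anticipate is the exchangeability argument for the one-step bound: size-biasing by the i.i.d.\ Yule sizes $X_k$, though nontrivial at the level of individual $t_N$-labels, averages out to uniform at the level of $n$-subsets because the weights are i.i.d.\ and independent of the spring.  The remaining bound on $\P(E_N^c)$ is essentially routine (the restriction $M_N\geq N$ rules out small-$m$ pathologies), and the coupling and transfer of convergence are standard.
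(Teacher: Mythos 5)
Your proposal is correct and follows the same overall strategy as the paper: couple the summer and no-summer models through a shared spring, bound the per-day probability of a summer coalescence among the at most $n$ traced lineages by a constant times $n^2\,\E[1/(Y_{1,N}+\cdots+Y_{N,N})]$, and use condition \eqref{H} to conclude that the coupled processes agree up to time $\lfloor K\rho_N\rfloor$ with probability tending to one, so the assumed convergence transfers. The differences lie in the one-step analysis. Your exchangeability/size-biasing computation with the i.i.d.\ summer family sizes $(X_k)$, showing that conditional on no summer coalescence the $t_N$-ancestors of the traced lineages are uniform and hence the two one-step kernels agree, makes explicit a step the paper leaves implicit in its ``canonical coupling''; that is a genuine (and welcome) addition. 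For the pairwise coalescence bound, however, the paper proceeds differently: it uses a Polya urn lemma (Lemma \ref{polya}), exploiting that the summer Yule growth of the $M$ families is a Polya urn started from $M$ distinct colors, so that two uniformly chosen time-$T_N$ individuals share a $t_N$-ancestor with probability at most $2/(M+1)$ --- no moment computation, and uniformly over the summer length. Your moment route reaches the same $O(1/M_N)$ bound but is glossed at one point: bounding the denominator merely by $S_N\geq M_N$ and invoking the bounded coefficient of variation gives only $\sigma^2/M_N$ with $\sigma^2=(1-p)/p^2$, $p=e^{-\lambda_N(T_N-t_N)}$, which is not $O(1/M_N)$ for long summers; you additionally need concentration of $S_N$ around $M_N/p$ (e.g.\ split on $\{S_N\geq M_N/(2p)\}$ and apply Chebyshev on the complement, where the integrand is at most one). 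This is routine and yields an absolute constant, but it is precisely the extra work that the paper's Polya urn argument avoids.
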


When condition (\ref{H}) holds, coalescence of lineages during the summer is sufficiently rare that the inclusion of the summer period does not affect the genealogy of the population in the limit.  To understand why (\ref{H}) is the correct condition, note that under the usual scaling $\rho_N = 1/c_N$, the probability that two randomly chosen lineages coalesce is $c_N = 1/\rho_N$.  On the other hand, conditional on there being $M$ individuals alive at the end of the spring, the probability that two randomly chosen lineages merge during the summer, regardless of the length of the summer, is bounded above by $2/(M + 1)$, as a consequence of Lemma \ref{polya}. Therefore, condition (\ref{H}) implies that the probability that two lineages coalesce in the summer is of smaller order than the probability that two lineages coalesce in the spring. 

On the other hand, if (\ref{H}) fails, then as long as the summer has a length that does not tend to zero as $N \rightarrow \infty$, the summer period will cause additional pairwise mergers of ancestral lines.

\subsection{\texorpdfstring{A characterization of the possible $\Lambda$-coalescent limits}{}}\label{charsub}

Theorem \ref{charthm} below characterizes the $\Lambda$-coalescents that can arise as limits of ancestral processes in the model introduced in Section \ref{modelsec}.  Essentially, the possible measures $\Lambda$ are those that can be expressed as mixtures of the measures $\Lambda$ that appear as limits in Theorem \ref{simplethm}.  Note that the model in Section \ref{modelsec} is characterized by the birth rates $(\lambda_N)_{N=1}^{\infty}$, the times $(t_N)_{N=1}^{\infty}$ and $(T_N)_{N=1}^{\infty}$, and the distributions of the random variables $(\tau_{1,N})_{N=1}^{\infty}$.  Therefore, to say that the $\Lambda$-coalescent can arise as a limit in this model means that there are choices of these parameters for which the rescaled ancestral processes converge to the $\Lambda$-coalescent.

\begin{thm}\label{charthm}
It is possible for the $\Lambda$-coalescent to arise as the limit of the rescaled ancestral processes $(\Psi_{n,N}(\lfloor \rho_N t \rfloor))_{t \geq 0}$ in the population model defined in Section \ref{modelsec} if and only if we can write
\begin{equation}\label{decomp}
 \Lambda = a_1 \delta_0 + a_2 \delta_1 + \Lambda',
\end{equation}
where $a_1$ and $a_2$ are nonnegative real numbers and $\Lambda'$ is a measure on $(0,1)$ with density $h$ with respect to Lebesgue measure, where
\begin{equation}\label{hdef}
h(y) = \int_0^{\infty} \frac{1}{\kappa} \bigg(\frac{y}{1-y} \bigg)^2 e^{- \frac{y}{\kappa(1-y)}} \: \eta({\rm d} \kappa)
\end{equation}
for all $y \in (0,1)$ and $\int_0^{\infty} (1 \wedge \kappa^2) \: \eta({\rm d} \kappa) < \infty$.
\end{thm}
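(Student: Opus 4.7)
The plan is to prove both directions by leveraging Theorem~\ref{simplethm} as the fundamental building block, together with the general Cannings/M\"ohle--Sagitov theory of convergence to $\Lambda$-coalescents developed in \cite{sag99, mosa01, sch03}.

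For the sufficiency direction, given $\Lambda$ of the stated form, I would construct a sequence of model parameters whose ancestral processes converge to the $\Lambda$-coalescent. The idea is to design the distribution of $\tau_{1,N}$ so that three distinct mechanisms coexist: a ``normal'' wake-up at $T_N$ which, combined with the winter sampling, contributes the Kingman atom $a_1\delta_0$ via pairwise binomial coalescences; a very rare deep-early wake-up in the $\beta>1$ regime of Theorem~\ref{simplethm} contributing $a_2\delta_1$ via occasional full mergers; and a spread of moderately early wake-up times (each in the $\beta=1$ regime for some $\kappa$) whose aggregate intensity is chosen so that, per generation, the expected number of individuals with effective scaling $e^{\lambda_N(T_N-\tau)}/N$ lying in ${\rm d}\kappa$ is approximately $c_N\,\eta({\rm d}\kappa)$, yielding $\Lambda'$ after time is rescaled by $c_N$. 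A short computation gives $\int_0^1 h(y)\,{\rm d}y = \int \E[Y_\kappa^2]\,\eta({\rm d}\kappa)$, and the integrability condition $\int(1\wedge\kappa^2)\,\eta({\rm d}\kappa)<\infty$ is equivalent to finiteness of $\Lambda'$ because $\E[Y_\kappa^2]\asymp\kappa^2$ for small $\kappa$ and $\E[Y_\kappa^2]\asymp 1$ for large $\kappa$. Convergence of the rescaled ancestral processes then follows by adapting the transition-rate computations of Sections~\ref{2ptsec}--\ref{expsec}.

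For the necessity direction, I would assume that the rescaled ancestral processes converge to a $\Lambda$-coalescent and use the general characterization of such limits in terms of the asymptotic family-size distribution of $\nu_{1,N}$. The structural observation is that, in this model, an individual can produce a family with $\nu_{1,N}/N$ bounded away from zero only if it wakes up at a time $\tau$ with $e^{\lambda_N(T_N-\tau)}$ of order $N$. Conditional on $e^{\lambda_N(T_N-\tau)}/N\to\kappa$, the descendant count $X_{i,N}$ is asymptotically $\kappa N W$ with $W$ exponential$(1)$, and \eqref{eq:Y_is_W} identifies the sampled relative family size as $Y_\kappa$. A Poissonisation of these large wake-up events then represents the jumps of the limit coalescent as a Poisson point process on $(0,1)$ whose intensity must be a mixture $\int y^{-2}\Lambda_\kappa({\rm d}y)\,\eta({\rm d}\kappa)$ for some positive measure $\eta$ on $(0,\infty)$, which forces the density of $\Lambda'$ to be \eqref{hdef}. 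The atom $a_1\delta_0$ accounts for any residual Kingman-type pairwise coalescences, while $a_2\delta_1$ accounts for rare events in which a single family after sampling comprises essentially the whole population.

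The main obstacle is the necessity direction: showing that the list of mechanisms above is exhaustive and extracting a well-defined intensity $\eta$ from the limit. This will require a careful tightness argument on the empirical distribution of the effective $\kappa$-values across generations, a verification that intermediate wake-up regimes cannot contribute additional mass to $\Lambda$ beyond what $\eta$ captures, and a reduction (reminiscent of Proposition~\ref{springsummer}) showing that the summer phase contributes to $\Lambda$ at most through the $\delta_0$ atom. Once $\eta$ is extracted from the Poisson point process of large jumps, the integrability condition $\int(1\wedge\kappa^2)\,\eta({\rm d}\kappa)<\infty$ reduces to the finiteness of $\Lambda$ on $[0,1]$.
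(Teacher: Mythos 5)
Your necessity argument is where the genuine gap lies, and you flag it yourself without closing it. Saying that the large wake-up events can be ``Poissonised'' and that the resulting jump intensity ``must be a mixture $\int y^{-2}\Lambda_\kappa({\rm d}y)\,\eta({\rm d}\kappa)$'' asserts exactly what has to be proved: nothing in your sketch forces the limit $\Lambda'$ to be absolutely continuous with a density of the special form \eqref{hdef} (why can $\Lambda$ have no atoms in $(0,1)$? why no singular part?), and no concrete procedure extracts $\eta$ from the assumed convergence. The paper's proof rests on a structural fact your sketch circles around but never uses: $X_{1,N}$ is \emph{exactly} a mixture of geometric laws, $\P(X_{1,N}>m)=\int_0^1(1-p)^m\,\mu_N({\rm d}p)$ as in \eqref{geomeq}. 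Combining this with the reduction of condition 3 of Theorem \ref{Cannings} to the tail condition \eqref{X1cond} on $X_{1,N}/a_N$ (the last lemma of Section \ref{sec:cannings}), truncating the mixture at $p=N^{-3/4}$ and replacing $(1-p)^k$ by $e^{-pk}$, one sees that $\frac{N}{c_N}\P\big(X_{1,N}>a_Nx/(1-x)\big)$ is, up to vanishing errors, the Laplace transform at $x/(1-x)$ of a rescaled push-forward $\chi_N$ of $\mu_N$. One then rules out atoms of $\Lambda$ in $(0,1)$, applies the continuity theorem for Laplace transforms and vague convergence (Theorem 8.5 of \cite{bw}) to obtain a limit measure $\chi$, and identifies $h$ through the identity \eqref{expid}, with $\eta$ the push-forward of $\chi$ under $x\mapsto 1/x$; only the final integrability step ($E[Y_\kappa^2]\asymp 1\wedge\kappa^2$) matches what you wrote. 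Your proposed ``tightness of the empirical distribution of effective $\kappa$-values'' would at best give subsequential limits and does not by itself impose the completely monotone form of the limiting tail, so the central idea of the necessity direction is missing. (Note also that the correct normalisation is $X_{1,N}/a_N$ with $a_N$ the concentration scale of $S_N$ from Lemma \ref{iidXS}, not $N$ itself; $a_N\gg N$ is possible.)

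In the sufficiency direction your plan (mix the regimes of Theorem \ref{simplethm} and verify the Cannings criteria) is essentially the paper's, but one stated mechanism is wrong: an individual that wakes up exactly at $T_N$ has exactly one descendant, so if all ``normal'' individuals do this then $S_N=N$ and the winter sampling is trivial; it produces no ``pairwise binomial coalescences'' and cannot generate the $a_1\delta_0$ component. In the paper's construction the Kingman atom comes instead from rare early risers placed in the subcritical regime ($\sigma_{1,N}=\log(N^{1-r})$, i.e.\ $\beta=1-r<1$), whose families are large but $o(N)$, with timescale $c_N\asymp \omega_N N^{1-2r}$. Giving the normal individuals nondegenerate offspring could also produce a Kingman part, but it changes $c_N$ to order $1/N$ and forces a re-balancing of the rare-event probabilities that your sketch does not carry out; as written, the construction would not yield the claimed limit without this repair.
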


We now make a few remarks concerning this result:
\begin{enumerate}
\item To relate this result to Theorem \ref{simplethm}, note that the density of the random variable $Y_\kappa$ described in (\ref{Ydef}) is given by $$f_{\kappa}(y) = \frac{1}{\kappa (1-y)^2} e^{- \frac{y}{\kappa(1-y)}},\qquad y\in[0,1].$$  Therefore, if $\Lambda'$ has density $h$ with respect to Lebesgue measure, where $h$ is given by (\ref{hdef}), then $y^{-2} \Lambda'({\rm d}y)/{\rm d}y = \int_0^{\infty} f_{\kappa}(y) \: \eta({\rm d}\kappa)$. Note also that
\begin{equation}\label{declp}
 \Lambda'({\rm d}y)=h(y){\rm d}y=\left(\int_0^\infty y^2 f_\kappa(y)\eta({\rm d} \kappa)\right){\rm d}y=\int_0^\infty\Lambda_\kappa({\rm d}y)E[Y_\kappa^2]\eta({\rm d}\kappa).
\end{equation}

\item Suppose $\eta({\rm d} \kappa) = \kappa^{-1-a} \: {\rm d} \kappa$, where $0 < a < 2$.  Then $\int_0^{\infty} (1 \wedge \kappa^2) \: \eta({\rm d} \kappa) < \infty$.  In this case, making the substitution $x = y/(\kappa(1-y))$, we have
$$h(y) = \bigg(\frac{y}{1-y} \bigg)^2 \int_0^{\infty} e^{- \frac{y}{\kappa(1-y)}} \kappa^{-2-a} \: {\rm d} \kappa = \bigg(\frac{y}{1 - y} \bigg)^{1 - a} \int_0^{\infty} e^{-x} x^a \: {\rm d}x,$$ which is a constant multiple of the Beta$(2-a, a)$ density appearing in Theorem \ref{expthm}.

\item Typically, when rescaling time in the ancestral processes, we take $\rho_N = 1/c_N$, so that the expected time required for two randomly chosen lineages to merge equals $1$.  When $\rho_N = 1/c_N$, the measure $\Lambda$ that appears in the limit must be a probability measure.  Allowing arbitrary scaling constants $(\rho_N)_{N=1}^{\infty}$ simply allows finite measures $\Lambda$ that are not probability measure to arise in the limit.  See Remark \ref{probrem} for details.

\item For the purposes of this result, we may assume without loss of generality that $t_N = T_N$, because regardless of the choice of $t_N$, it is always possible that the distribution of the times $\tau_{i,N}$ may be concentrated on a smaller subinterval of $[0, T_N]$.  That is, every $\Lambda$-coalescent that can arise in this model can arise in an instance of the model in which there is no summer period.

\item The decomposition \eqref{hdef} in Theorem \ref{charthm} is unique. Indeed, if two measures $\eta$ and $\hat{\eta}$ define the same function $h$ via \eqref{hdef}, then the Laplace transforms of the measures $z \eta \circ \phi^{-1}({\rm d}z)$ and $z \hat{\eta}\circ \phi^{-1}({\rm d}z)$, where $\phi(\kappa)=1/\kappa$, coincide in $(0,\infty)$. We conclude that $\eta\circ \phi^{-1}=\hat{\eta}\circ \phi^{-1}$, and hence $\eta=\hat{\eta}$. In combination with Theorem \ref{charthm} we infer that $\{\delta_0,\delta_1\}\cup \{\Lambda_\kappa\}_{\kappa>0}$ is the set of extremal points of the convex set of probability measures $\Lambda$ appearing in Theorem~\ref{charthm}.  Note that (\ref{declp}) shows how any probability measure $\Lambda$ appearing in Theorem~\ref{charthm} can be written as a mixture of the probability measures $\Lambda_{\kappa}$.
\end{enumerate}

We end this section with an alternate characterization of the measures $\Lambda'$ appearing in Theorem~\ref{charthm}.
\begin{prop}
 An integrable function $h:(0,1)\to(0,\infty)$ can be expressed as in \eqref{hdef} for some measure $\eta$ on $[0,\infty)$ if and only if there is a completely monotone function $g:(0,\infty)\to(0,\infty)$ such that 
 $$h(y)=\frac{y^2}{(1-y)^2}\,g\left(\frac{y}{1-y}\right),$$
 for all $y\in(0,1)$ and $\int_0^\infty g(v) (1\wedge v^2)\: {\rm d}v<\infty$.
\end{prop}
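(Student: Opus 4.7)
The plan is to reduce \eqref{hdef} to a Laplace transform identity and then invoke Bernstein's theorem. Make two changes of variable: on the target side, put $v = y/(1-y)$, which bijects $(0,1)$ with $(0,\infty)$, and define $g:(0,\infty)\to(0,\infty)$ by the claimed factorization $h(y) = \frac{y^2}{(1-y)^2}\,g(y/(1-y))$; on the source side, put $u = 1/\kappa$ and write $\phi(\kappa) = 1/\kappa$ as in the final remark after Theorem~\ref{charthm}. Setting $\mu(\dd u) := u\,(\eta\circ\phi^{-1})(\dd u)$ converts the integrand $\kappa^{-1} e^{-v/\kappa}$ in \eqref{hdef} into $u e^{-vu}$, so the representation becomes
\begin{equation*}
g(v) = \int_0^\infty e^{-vu}\,\mu(\dd u), \qquad v > 0.
\end{equation*}
It therefore suffices to show that the positive measures $\mu$ arising this way from admissible $\eta$ are exactly those whose Laplace transform $g$ is completely monotone and satisfies $\int_0^\infty g(v)(1\wedge v^2)\,\dd v < \infty$.

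For the ``only if'' direction, the displayed formula exhibits $g$ as the Laplace transform of a positive measure, hence completely monotone by differentiation under the integral sign. The integrability of $g(v)(1\wedge v^2)$ follows by Fubini from the elementary asymptotics $\int_0^\infty e^{-vu}(1\wedge v^2)\,\dd v \asymp u^{-1}\wedge u^{-3}$; combined with the factor $u$ in $\mu(\dd u) = u\,(\eta\circ\phi^{-1})(\dd u)$ this produces a $(\eta\circ\phi^{-1})$-integrand of order $1 \wedge u^{-2}$, which pulls back under $u = 1/\kappa$ to $1\wedge\kappa^2$.

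For the ``if'' direction, Bernstein's theorem gives a unique nonnegative Radon measure $\mu$ on $[0,\infty)$ with $g(v) = \int_{[0,\infty)} e^{-vu}\,\mu(\dd u)$. The assumption $\int_0^\infty g(v)(1\wedge v^2)\,\dd v < \infty$ rules out an atom of $\mu$ at $0$, since any mass $\mu(\{0\}) = c > 0$ would force $g(v) \geq c$ for all $v$ and contradict integrability. Hence $\mu$ is concentrated on $(0,\infty)$; defining $\eta$ as the push-forward of $u^{-1}\mu(\dd u)$ under $u \mapsto 1/u$ and running the two substitutions in reverse recovers \eqref{hdef} and translates the integrability of $g(v)(1\wedge v^2)$ back into $\int_0^\infty (1\wedge\kappa^2)\,\eta(\dd\kappa) < \infty$.

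The main (modest) obstacle is the bookkeeping around the two substitutions: tracking the factor of $u$ that relates $\mu$ to the push-forward of $\eta$, and verifying that the endpoint behavior at $\kappa \in \{0,\infty\}$, equivalently $u \in \{\infty,0\}$, is consistent so that Bernstein's representation produces a measure with no spurious atoms on either end.
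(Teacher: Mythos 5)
Your proof is correct and follows essentially the same route as the paper's: the substitutions $v=y/(1-y)$ and $u=1/\kappa$ turn \eqref{hdef} into the Laplace-transform identity $g(v)=\int_0^\infty e^{-vu}\,\mu({\rm d}u)$ with $\mu({\rm d}u)=u\,(\eta\circ\phi^{-1})({\rm d}u)$, where $\phi(\kappa)=1/\kappa$, and Bernstein's theorem gives the converse, exactly as in the paper. Your explicit check that the integrability of $g(v)(1\wedge v^2)$ rules out an atom of $\mu$ at $0$ is a point of care the paper's proof leaves implicit.
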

\begin{proof}
First note that, the relation $Tg(y)=(y/(1-y))^2g(y/(1-y))$, $y\in(0,1)$, defines a bijective function $T$ mapping positive functions on $(0,\infty)$ into positive functions on $(0,1)$. Moreover, making the substitution $v=y/(1-y)$, we see that
 $$\int_0^1 \frac{y^2}{(1-y)^2}\,g\left(\frac{y}{1-y}\right)\:{\rm d}y=\int_0^\infty \frac{v^2}{(1+v)^2}g(v)\:{\rm d}v.$$
 Hence, the integrability of $Tg$ is equivalent to the integrability of $v\mapsto g(v) (1\wedge v^2)$.
 In addition, if $h$ is given by \eqref{hdef}, then $h=Tg$ with  
 $$g(v)=\int_0^\infty \frac1{\kappa} e^{-\frac{v}{\kappa}}\: \eta({\rm d} \kappa).$$
Clearly $g$ is the Laplace transform of the measure $m({\rm d}z)\coloneqq z\: \eta \circ \phi^{-1}({\rm d}z)$, where $\phi(\kappa)=1/\kappa$, and thus, $g$ is completely monotone. Conversely, assume that $h=Tg$ for some completely monotone function $g$. The latter is, by Bernstein's theorem, the Laplace transform of a measure $m$ on $[0,\infty)$. It follows that $h$ is given by \eqref{hdef} with $\eta({\rm d}\kappa)\coloneqq \kappa\: m \circ \phi^{-1}({\rm d}\kappa)$, which ends the proof.
 \end{proof}

\section{Genealogies in Cannings Models}\label{sec:cannings}

The model introduced in Section \ref{modelsec} is an example of a Cannings model.  Cannings models, first introduced in \cite{can1, can2}, have discrete generations and a fixed population size $N$, and the distribution of the family size vector $(\nu_{1,N}, \dots, \nu_{N,N})$ is required to be exchangeable.  There is by now a standard set of tools for studying the genealogy of such models.  To prove convergence of the ancestral process in our model to a $\Lambda$-coalescent, we will mainly use the following result, which is essentially Theorem 3.1 of \cite{sag99}; see also Theorem 3.1 of \cite{sch03}.

\begin{thm}\label{Cannings}
Consider a Cannings model in which the sizes of the $N$ families are denoted by $\nu_{1,N}, \dots, \nu_{N,N}$.  Let $$c_N = \frac{E[(\nu_{1,N})_2]}{N-1}.$$ Now sample $n$ individuals at random from the population in generation zero, and let $\Psi_{n,N}(k)$ be the partition of $[n]$ such that $i$ and $j$ are in the same block of $\Psi_{n,N}(k)$ if and only if the $i$th and $j$th individuals in the sample have the same ancestor in generation $-k$.  Let $\Lambda$ be a probability measure on $[0,1]$.  Then the processes $(\Psi_{n,N}(\lfloor t/c_N\rfloor))_{t \geq 0}$ converge in $\Db_{{\cP}_n}[0, \infty)$ to the $\Lambda$-coalescent for all $n\in\Nb$ if and only if the following three conditions hold:
\begin{enumerate}
\item We have $$\lim_{N \rightarrow \infty} c_N = 0.$$

\item We have $$\lim_{N \rightarrow \infty} \frac{E[(\nu_{1,N})_2(\nu_{2,N})_2]}{N^2 c_N} = 0.$$

\item For all $x \in (0, 1)$ such that $\Lambda(\{x\}) = 0$, we have
$$\lim_{N \rightarrow \infty} \frac{N}{c_N}\P(\nu_{1,N} > Nx) = \int_x^1 y^{-2} \: \Lambda({\rm d}y).$$
\end{enumerate}
\end{thm}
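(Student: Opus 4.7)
The plan is to verify convergence of the discrete-time ancestral chains to the $\Lambda$-coalescent by matching the rescaled one-step transition probabilities to the $\Lambda$-coalescent jump rates, following the M\"ohle--Sagitov framework. Exchangeability of the family sizes $(\nu_{1,N}, \ldots, \nu_{N,N})$ implies that, for a transition corresponding to the merger of $b$ blocks into groups of sizes $k_1, \ldots, k_r$ (with $\sum_i k_i = b$ and $r$ distinct parental families), the one-step probability has the form
\[
p_N(b; k_1, \ldots, k_r) \;=\; \frac{(N)_r}{(N)_b} \cdot E\bigl[(\nu_{1,N})_{k_1} (\nu_{2,N})_{k_2} \cdots (\nu_{r,N})_{k_r}\bigr]
\]
times a combinatorial factor counting the set partitions of $[b]$ realizing the given type. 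Convergence of the rescaled chains in $\Db_{\cP_n}[0,\infty)$ then reduces to showing that $p_N/c_N$ converges to the corresponding $\Lambda$-coalescent rate $\lambda_{b,k}$ for \emph{simple} mergers (exactly one $k_i \geq 2$) and to $0$ for every \emph{complex} event (at least two $k_i \geq 2$).

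For the forward direction, the complex events are handled first: when at least two of the $k_i$'s are $\geq 2$, the bound $(\nu_{i,N})_{k_i} \leq N^{k_i - 2}(\nu_{i,N})_2$ on those indices (and $(\nu_{j,N})_{k_j} \leq N^{k_j}$ on the others) yields
\[
E\bigl[(\nu_{1,N})_{k_1} \cdots (\nu_{r,N})_{k_r}\bigr] \leq N^{b-4} E\bigl[(\nu_{1,N})_2 (\nu_{2,N})_2\bigr].
\]
Multiplying by $(N)_r/(N)_b \sim N^{r-b}$ and dividing by $c_N$, the contribution vanishes by hypothesis~(2). For simple mergers, the rate is controlled by a single factorial moment, and after algebraic manipulation involving $(N)_r/(N)_b$ the required convergence amounts to
\[
\frac{1}{c_N N^{b-1}} E\bigl[(\nu_{1,N})_k (N - \nu_{1,N})_{b-k}\bigr] \longrightarrow \int_0^1 y^{k-2}(1-y)^{b-k} \: \Lambda({\rm d}y).
\]
The crux is proving this identity. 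I would express the left side as an expectation against the distribution of $\nu_{1,N}/N$, noting that to leading order $(\nu_{1,N})_k/N^k \approx (\nu_{1,N}/N)^k$ and $(N-\nu_{1,N})_{b-k}/N^{b-k} \approx (1-\nu_{1,N}/N)^{b-k}$, then rewrite this expectation via Fubini / integration by parts as an integral of the tail $\P(\nu_{1,N} > Nx)$. Condition~(3) then delivers the limiting integral, while condition~(1) guarantees that single-generation jumps become infinitesimal on the $1/c_N$ timescale, producing a genuine continuous-time limit.

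The main obstacle will be the careful conversion between the tail asymptotic in~(3) and the two-parameter factorial moment controlling the simple-merger rate, in particular the boundary behaviour near $y=1$ (where $\Lambda$ may carry an atom corresponding to the star-shaped component and produces the $(1-y)^{b-k}$ weighting) and near $y=0$ (where $\int_0^1 y^{-2}\Lambda({\rm d}y)$ may diverge but is tamed by the factor $y^{k-2}$ for $k \geq 2$, so that only the diagonal atom at $0$ contributes through $c_N$ itself). Once pointwise convergence of all transition rates is established, tightness in $\Db_{\cP_n}[0,\infty)$ is automatic because $\cP_n$ is finite and the total rate out of any state is uniformly bounded, and standard semigroup convergence yields Skorohod convergence of the processes. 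For the converse direction, I would read off conditions~(1)--(3) by inverting the same asymptotic identifications: $c_N\to 0$ is forced by the pair-merger rate being $O(1)$ after time scaling; condition~(2) is forced by the absence of simultaneous mergers in the $\Lambda$-coalescent; and condition~(3) follows by inverting the tail-to-moment relation, since $\Lambda$ is uniquely determined by the rates $(\lambda_{b,k})_{b\ge k\ge 2}$.
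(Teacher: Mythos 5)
You should first be aware that the paper does not prove Theorem \ref{Cannings} at all: it is imported as ``essentially Theorem 3.1 of Sagitov (1999)'' (see also M\"ohle--Sagitov (2001) and Theorem 3.1 of Schweinsberg (2003)), and the remark following the statement merely reconciles the present formulation of conditions 1--3 with Sagitov's, via equations (16), (17) and (20) of M\"ohle--Sagitov. So you are reconstructing the proof of the cited result, and your overall route --- the exchangeable factorial-moment formula $\frac{(N)_r}{(N)_b}E[(\nu_{1,N})_{k_1}\cdots(\nu_{r,N})_{k_r}]$ for one-step transition probabilities, condition 2 to kill simultaneous mergers, condition 3 to identify simple-merger rates, and semigroup convergence on the finite state space $\cP_n$ --- is exactly the route of those sources, so the strategy is the right one.

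There is, however, a concrete step that fails as written: the complex-merger estimate. Your bound $E[(\nu_{1,N})_{k_1}\cdots(\nu_{r,N})_{k_r}]\leq N^{b-4}E[(\nu_{1,N})_2(\nu_{2,N})_2]$, combined with $(N)_r/(N)_b\sim N^{r-b}$, only gives $p_N/c_N\lesssim N^{r-2}\cdot\frac{E[(\nu_{1,N})_2(\nu_{2,N})_2]}{N^2c_N}$, which condition 2 makes $o(N^{r-2})$ but not $o(1)$ once $r\geq 3$. This is not a corner case: the generic complex transition has extra singleton blocks (e.g.\ $b=5$, $k_1=k_2=2$, $k_3=1$, so $r=3$), and for it your bound yields only $N\cdot o(1)$. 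The loss comes from the crude pointwise bound $(\nu_{j,N})_{k_j}\leq N^{k_j}$ on the undistinguished factors. The standard repair is either to absorb those factors using exchangeability together with $\sum_j\nu_{j,N}=N$, exactly as the paper does inside the proof of Lemma \ref{nu12tail} (where $E[(\nu_{1,N})_2\nu_{2,N}]\leq\frac{N}{N-1}E[(\nu_{1,N})_2]=Nc_N$), or, more simply, to bound the probability of \emph{any} simultaneous-merger event among $b$ lineages by a union bound over choices of two disjoint pairs merging into two distinct parents, which is at most $\binom{b}{2}\binom{b-2}{2}\frac{(N)_2}{(N)_4}E[(\nu_{1,N})_2(\nu_{2,N})_2]\lesssim E[(\nu_{1,N})_2(\nu_{2,N})_2]/N^2=o(c_N)$ by condition 2. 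Two further points are glossed over: the identification of the simple-merger probability with $\frac{1}{c_NN^{b-1}}E[(\nu_{1,N})_k(N-\nu_{1,N})_{b-k}]$ discards cross terms in which two of the $b-k$ non-merging lineages fall into the same family, and controlling these again requires condition 2; and the tail-to-moment conversion you call the ``crux'' is the actual substance of Sagitov's proof (weak convergence of the measures $\Lambda_N({\rm d}y)=\frac{N}{c_N}y^2\,\P(\nu_{1,N}/N\in{\rm d}y)$, with total mass pinned by $E[(\nu_{1,N})_2]=(N-1)c_N$, including the treatment of mass escaping to $y=0$, which produces the Kingman component), so it must be carried out rather than asserted; the same applies to the converse direction.
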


\begin{rem} In place of condition 1 in Theorem \ref{Cannings}, Sagitov \cite{sag99} has the condition that $N^{-1} E[(\nu_{1,N} - 1)^2] \rightarrow 0$, which is equivalent because the fact that $E[\nu_{1,N}] = 1$ implies that $E[(\nu_{1,N})_2] = E[(\nu_{1,N} - 1)^2]$.  Also, in place of condition 3, Sagitov has the condition that for all integers $a \geq 2$, we have $$\lim_{N \rightarrow \infty} \frac{E[(\nu_{1,N} - 1)^2 \cdots (\nu_{a,N} - 1)^2]}{N^a c_N} = 0.$$  However, the equivalence of (16) and (20) in \cite{mosa01} implies that we can consider $(\nu_{k,N})_2$ in place of $(\nu_{k,N} - 1)^2$, and equation (17) of \cite{mosa01} implies that the limit is zero for all $a \geq 2$ if and only if the limit is zero when $a = 2$.  These observations lead to the formulation of the result given above.
\end{rem}
For the rest of this section, we consider a subclass of Cannings models in which the family sizes in each generation are obtained in the following way.  We consider a sequence of independent and identically distributed positive integer-valued random variables  $X_{1,N}, \dots, X_{N,N}$, where $X_{k,N}$ denotes the number of offspring produced by the $k$th individual.  Note that we do not allow the random variables $X_{k,N}$ to take the value zero.  We let $S_N = X_{1,N} + \dots + X_{N,N}$ be the total number of offspring.  We then sample $N$ of the $S_N$ offspring without replacement to form the next generation, and denote by $\nu_{k,N}$ the number of offspring of the $k$th individual that are sampled.  Note that the model introduced in Section \ref{modelsec} fits into this framework.

In this section, we will use the notation $f(N) \ll g(N)$ to mean $\lim_{N \rightarrow \infty} f(N)/g(N) > 0$ and $f(N) \lesssim g(N)$ to mean $\sup_N f(N)/g(N) < \infty$.

Lemma \ref{dict} is useful for translating properties of the family sizes after sampling to properties of the family sizes before sampling, and vice versa.

\begin{lemma} \label{dict}
For $r\in\Nb$, $k_1,\ldots, k_r\geq 2$, and $N>k_1+\cdots+k_r$, we have
\begin{equation}\label{multimoments}
\frac{ E[(\nu_{1,N})_{k_1}\cdots({\nu}_{r,N})_{k_r}]}{(N)_{k_1+\cdots+k_r}}= E\left[\frac{(X_{1,N})_{k_1}\cdots(X_{r,N})_{k_r}}{({S}_N)_{k_1+\cdots+k_r}}\right].
\end{equation}
In particular,
 \begin{equation}\label{cNX}
c_N \coloneqq \frac{E[(\nu_{1,N})_2]}{N-1} = N E \bigg[ \frac{X_{1,N}(X_{1,N} - 1)}{S_N(S_N - 1)} \bigg].
\end{equation}
Moreover,
\begin{equation}\label{lowcN}
c_N\geq \frac{\P(X_{1,N}\geq 2)^2}{2N}.
\end{equation}
\end{lemma}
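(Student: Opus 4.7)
The proof naturally splits into three pieces; only the combinatorial moment identity (\ref{multimoments}) requires any real work, and the specialization (\ref{cNX}) and the lower bound (\ref{lowcN}) then follow quickly. To prove (\ref{multimoments}) I would condition on $(X_{1,N},\ldots,X_{N,N})$. Given these family sizes, the vector $(\nu_{1,N},\ldots,\nu_{N,N})$ is obtained by a uniform, size-$N$ sample without replacement from the $S_N$ offspring partitioned into the $N$ families. The product $(\nu_{1,N})_{k_1}\cdots(\nu_{r,N})_{k_r}$ counts the ordered tuples of pairwise distinct offspring in which the first $k_1$ entries belong to family $1$, the next $k_2$ to family $2$, and so on, with all entries required to lie in the sample. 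The number of such tuples ignoring the sampling constraint is $(X_{1,N})_{k_1}\cdots(X_{r,N})_{k_r}$, and any fixed tuple of $k\coloneqq k_1+\cdots+k_r$ offspring is contained in the sample with conditional probability $(N)_k/(S_N)_k$. Combining these facts and taking the outer expectation gives (\ref{multimoments}); the specialization $r=1$, $k_1=2$ is then (\ref{cNX}).

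For the lower bound (\ref{lowcN}), exchangeability of the $X_{i,N}$ rewrites (\ref{cNX}) as
$$c_N = E\!\left[\frac{\sum_{i=1}^N X_{i,N}(X_{i,N}-1)}{S_N(S_N-1)}\right].$$
The Cauchy--Schwarz inequality yields $\sum_{i=1}^N X_{i,N}^2 \geq S_N^2/N$, and subtracting $S_N$ gives $\sum_{i=1}^N X_{i,N}(X_{i,N}-1)\geq S_N(S_N-N)/N$, so $c_N \geq N^{-1}\,E[(S_N-N)/(S_N-1)]$. Now let $K\coloneqq \#\{i:X_{i,N}\geq 2\}$, which is $\mathrm{Binomial}(N,p_N)$ with $p_N\coloneqq P(X_{1,N}\geq 2)$; since $X_{i,N}-1\geq \mathds{1}\{X_{i,N}\geq 2\}$, one has $S_N-N\geq K$. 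The map $t\mapsto t/(t+N-1)$ is nondecreasing on $[0,\infty)$, and $K+N-1\leq 2N$ because $K\leq N$, so $(S_N-N)/(S_N-1)\geq K/(K+N-1)\geq K/(2N)$. Taking expectations gives $c_N\geq E[K]/(2N^2)=p_N/(2N)\geq p_N^2/(2N)$, which is (\ref{lowcN}).

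No step here is delicate. The only point requiring care is the combinatorial reading of $(\nu_{1,N})_{k_1}\cdots(\nu_{r,N})_{k_r}$ as a count of ordered tuples of pairwise distinct sampled offspring with prescribed family memberships; once that is in place the hypergeometric conditional probability $(N)_k/(S_N)_k$ does all the work in the first identity, and the lower bound then follows from the two elementary inequalities above (in fact giving the slightly stronger bound $c_N\geq p_N/(2N)$).
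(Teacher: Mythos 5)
Your proof is correct. For the identity \eqref{multimoments} and its specialization \eqref{cNX} you argue along exactly the lines the paper uses (it cites equations (19) and (21) from Lemma 6 of \cite{sch03} and gives the same probabilistic reading): conditionally on the family sizes, $(\nu_{1,N})_{k_1}\cdots(\nu_{r,N})_{k_r}$ counts ordered tuples of distinct sampled offspring with prescribed family memberships, and each fixed tuple of $k=k_1+\cdots+k_r$ offspring lies in the size-$N$ sample with conditional probability $(N)_k/(S_N)_k$; so this part is the same computation, written out rather than cited. Where you genuinely diverge is the lower bound \eqref{lowcN}. The paper works with the single summand: starting from \eqref{cNX} it replaces $S_N-1$ by $S_N$, uses $X_{1,N}(X_{1,N}-1)\geq \tfrac12 X_{1,N}^2$ on $\{X_{1,N}\geq 2\}$, applies Jensen's inequality to $E\big[X_{1,N}S_N^{-1}\1_{\{X_{1,N}\geq 2\}}\big]$, and bounds that expectation below by $N^{-1}\P(X_{1,N}\geq 2)$ (a stochastic-monotonicity/exchangeability step left implicit), which yields exactly the squared bound $\P(X_{1,N}\geq 2)^2/(2N)$. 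You instead symmetrize over the $N$ families, apply Cauchy--Schwarz in the form $\sum_i X_{i,N}^2\geq S_N^2/N$, and compare $S_N-N$ with the binomial count $K$ of families of size at least two; each step checks out (monotonicity of $t\mapsto t/(t+N-1)$ on $[0,\infty)$, $K\leq N$, $E[K]=N\P(X_{1,N}\geq 2)$), and you arrive at the stronger linear bound $c_N\geq \P(X_{1,N}\geq 2)/(2N)$, from which \eqref{lowcN} is immediate since the probability is at most one. The stronger bound would serve equally well where \eqref{lowcN} is invoked (in the proof of Proposition \ref{equiva}), so nothing is lost; your route trades the paper's Jensen step and its implicit bound $E[X_{1,N}S_N^{-1}\mid X_{1,N}\geq 2]\geq N^{-1}$ for symmetrization plus some elementary bookkeeping with $K$.
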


\begin{proof}
The identity (\ref{multimoments}) follows from equations (19) and (21) in the proof of Lemma 6 of \cite{sch03}.
Both sides give the probability that, if we sample $k_1 + \dots + k_r$ individuals, the first $k_1$ are descended from the first individual in the previous generation, the next $k_2$ are descended from the second individual in the previous generation, and so on.  The result (\ref{cNX}) is a special case of (\ref{multimoments}).
It remains to prove (\ref{lowcN}). Using \eqref{cNX} for the first inequality and Jensen's inequality for the third, we have
\begin{align*}
 c_N &\geq N E\left[\frac{X_{1,N}(X_{1,N}-1)}{S_N^2}\right]\geq \frac{N}{2} E\left[\frac{X_{1,N}^2}{S_N^2}\1_{\{X_{1,N}\geq 2\}}\right] \\
 &\geq \frac{N}{2}\left(E\left[\frac{X_{1,N}}{S_N}\1_{\{X_{1,N}\geq 2\}}\right]\right)^2 \geq \frac{N}{2}\left(\frac{1}{N}\P(X_{1,N}\geq 2)\right)^2,
\end{align*}
which ends the proof.
 \end{proof}

Lemma \ref{iidXS} shows that when $c_N \rightarrow 0$, the distribution of the total number of offspring is highly concentrated around some value $a_N$ when $N$ is large.  This result implies that, when an unusually large family arises that will produce multiple mergers of ancestral lines, the total size of the remaining $N-1$ families in that generation can be treated as being essentially nonrandom, so that the size of the large family will determine the proportion of ancestral lineages that merge in this generation.

\begin{lemma}\label{iidXS}
Suppose $\lim_{N \rightarrow \infty} c_N = 0$.  Then there exists a sequence of positive numbers $(a_N)_{N=1}^{\infty}$ such that the following hold:
\begin{enumerate}
\item We have $S_N/a_N \rightarrow_p 1,$ where $\rightarrow_p$ denotes convergence in probability as $N \rightarrow \infty$.

\item There exists $\delta > 0$ such that $$\lim_{N \rightarrow \infty} c_N^{-1} \P\Big(S_N - \max_{1 \leq k \leq N} X_{k,N} < \delta a_N\Big) = 0.$$
\end{enumerate}
\end{lemma}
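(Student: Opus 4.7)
The plan is to prove the two parts of the lemma separately, with the moment identity $c_N = NE[X_{1,N}(X_{1,N}-1)/(S_N(S_N-1))]$ from Lemma~\ref{dict} as the main quantitative tool.

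Part (i). Using $X(X-1)\geq X^2/2$ for $X\geq 2$ together with $S(S-1)\leq S^2$, the identity yields $E[(X_{1,N}/S_N)^2 \1_{\{X_{1,N}\geq 2\}}] \leq 2 c_N/N$. Markov's inequality plus a union bound then gives
\[
P\Big(\max_{1\leq k\leq N} X_{k,N}/S_N > \varepsilon\Big) \;\leq\; \frac{2 c_N}{\varepsilon^2}\;\longrightarrow\;0
\]
for every $\varepsilon>0$, so no family occupies a positive fraction of the total with high probability. I take $a_N$ to be a median of $S_N$, and pass from the bound above to $S_N/a_N\rightarrow_p 1$ via a classical weak law of large numbers for sums of positive i.i.d.\ random variables: choosing a truncation level $b_N$ with $NP(X_{1,N}>b_N)\to 0$ (which is possible since each $X_{1,N}$ is a proper random variable), Chebyshev's inequality on the truncated sum $\sum_k (X_{k,N}\wedge b_N)$, with $b_N$ tuned to balance its variance against the squared truncated mean, yields concentration around the truncated mean, and the median is then asymptotically equivalent to that mean.

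Part (ii). The pivotal elementary observation is that, since $X_{k,N}\geq 1$,
\[
S_N - \max_{1\leq k\leq N} X_{k,N} \;=\; \sum_{k\ne k^\star} X_{k,N} \;\geq\; N-1,
\]
so the event $A_\delta := \{S_N-\max_k X_{k,N} < \delta a_N\}$ is empty whenever $\delta a_N < N-1$. I split into cases. If $\limsup_N a_N/N < \infty$, choosing $\delta$ smaller than the reciprocal makes $A_\delta$ vacuous for all $N$ large, and the claim is immediate. Otherwise $a_N/N\to\infty$; on the intersection of $A_\delta$ with the concentration event $\{|S_N/a_N-1|\leq\delta\}$ from Part (i), one must have $\max_k X_{k,N}\geq (1-2\delta)a_N$. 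By exchangeability, $P(\max_k X_{k,N}\geq (1-2\delta)a_N)\leq NP(X_{1,N}\geq (1-2\delta)a_N)$; on the single-variable event $\{X_{1,N}\geq(1-2\delta)a_N,\,S_N\leq(1+\delta)a_N\}$ the integrand $X_{1,N}(X_{1,N}-1)/(S_N(S_N-1))$ is bounded below by $((1-2\delta)/(1+\delta))^2(1+o(1))$, so
\[
P\big(X_{1,N}\geq (1-2\delta)a_N,\; S_N\leq (1+\delta)a_N\big) \;\leq\; \frac{c_N}{N}\bigg(\frac{1+\delta}{1-2\delta}\bigg)^2(1+o(1)),
\]
which after multiplication by $N$ yields a bound of order $c_N$ on $P(A_\delta)$.

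The main obstacle is that the display above produces only $P(A_\delta)=O(c_N)$ in the regime $a_N/N\to\infty$, whereas the lemma asks for $o(c_N)$. Sharpening to $o(c_N)$ requires showing that the bulk of $c_N$, represented as $E[\sum_k (X_{k,N}/S_N)^2]+o(c_N)$, is accumulated on configurations in $A_\delta^c$ where several moderately-sized families coexist, so that $E[\sum_k(X_{k,N}/S_N)^2; A_\delta]=o(c_N)$; then since $\sum_k(X_{k,N}/S_N)^2\geq (1-2\delta)^2$ on $A_\delta$, one obtains $P(A_\delta)=o(c_N)$. I expect this can be realized by first upgrading Part (i) to a quantitative version $P(|S_N/a_N-1|>\varepsilon)=o(c_N)$ (via iteration of the truncation argument or a Bernstein-type refinement of Chebyshev), and then exploiting the bound $\sum_k(X_{k,N}/S_N)^2\leq \max_k X_{k,N}/S_N$ in concert with the deterministic inequality $S_N-\max_k X_{k,N}\geq N-1$ to partition the expectation between $A_\delta$ and $A_\delta^c$ in a way that isolates the desired $o(c_N)$ term.
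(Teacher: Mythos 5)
There are two genuine gaps, one in each part. In Part (i), the passage from $\max_{k} X_{k,N}/S_N \rightarrow_p 0$ to $S_N/a_N \rightarrow_p 1$ is not covered by any ``classical'' weak law of large numbers: the $X_{k,N}$ form a triangular array whose distribution changes with $N$, and relative stability of $S_N$ is precisely the statement that has to be extracted from the hypothesis $c_N \rightarrow 0$. Your truncation recipe only ensures $N\P(X_{1,N}>b_N)\rightarrow 0$, which is available for any proper random variable and carries no information; the second requirement, that $N\Var(X_{1,N}\wedge b_N)$ be negligible compared with $\big(N E[X_{1,N}\wedge b_N]\big)^2$, is not automatic and cannot be arranged by generic ``tuning''---it must be derived from $c_N\rightarrow 0$. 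This is exactly what the paper does: it truncates at level $\delta m_N$ with $m_N$ a median-type quantile of $\bar{S}_{N-1}$, proves $N\P(X_{1,N}\geq \delta m_N)\lesssim c_N$, and then obtains the key bound $E[W_{1,N}(W_{1,N}-1)]\lesssim c_N a_N^2/N$ for the truncated variables directly from the formula $c_N = N E[X_{1,N}(X_{1,N}-1)/(S_N(S_N-1))]$; that variance bound is what makes Chebyshev close the argument. Your sketch produces no analogue of it, so the concentration claim is unsupported.

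In Part (ii) you acknowledge that your argument gives only $\P(A_\delta)=O(c_N)$ rather than $o(c_N)$, and the repair you propose---upgrading Part (i) to $\P(|S_N/a_N-1|>\eps)=o(c_N)$---cannot work: a single individual with $X_{1,N}\geq \eps a_N$ occurs with probability of order $c_N/N$, so the deviation probability of the full sum is genuinely of order $c_N$ (this is visible already in the two-point model of Section 3 with $\beta=1$, where one early riser inflates $S_N$ by a constant factor with probability comparable to $c_N$). The missing idea, which is how the paper resolves it, is to split the $N$ individuals into three blocks of size roughly $N/3$: on the event $\{S_N-\max_k X_{k,N}<\delta a_N\}$ the two blocks not containing the maximizer both have sums below $\delta a_N$, each block-sum being that small has probability $O(c_N)$ by the Chebyshev estimate built on the truncated variance bound, and since at least two of the three block events must occur simultaneously, the probability is $O(c_N^2)=o(c_N)$. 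Without that (or an equivalent device), the $o(c_N)$ rate in Part (ii) is not reached.
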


\begin{proof}
Let $\bar{S}_{N-1} = S_N - X_{1,N} = X_{2,N} + \dots + X_{N,N}$.
Let
\begin{equation}\label{mndef}
m_N = \inf\{k:\P(\bar{S}_{N-1} \leq k) \geq 1/2\}.
\end{equation}
Let $0 < \delta < 1/2$.  We have
$$\P\big(X_{1,N} \geq \delta \bar{S}_{N-1} \big) \geq\P(X_{1,N} \geq \delta m_N)\P(\bar{S}_{N-1} \leq m_N)
\geq \frac{1}{2}\P(X_{1,N} \geq \delta m_N).$$
Because $\bar{S}_{N-1} = S_N - X_{1,N}$, it follows that
\begin{equation}\label{NPX}
N\P(X_{1,N} \geq \delta m_N) \leq 2N \P\big(X_{1,N} \geq \delta \bar{S}_{N-1}\big) = 2N \P \bigg( X_{1,N} \geq \frac{\delta}{1 + \delta} S_N \bigg).
\end{equation}
Using the fact that $S_N \geq N$ followed by Markov's Inequality, we have
\begin{align}\label{PXS}
\P \bigg( X_{1,N} \geq \frac{\delta}{1 - \delta} S_N \bigg) &= \P \bigg( X_{1,N}(X_{1,N} - 1) \geq \Big(\frac{\delta}{1 + \delta} S_N \Big) \Big( \frac{\delta}{1 + \delta} S_N - 1 \Big) \bigg) \nonumber \\
&= \P \bigg( X_{1,N}(X_{1,N} - 1) \geq \frac{\delta^2}{(1 + \delta)^2} S_N(S_N - 1) \bigg(\frac{S_N - \frac{1+\delta}{\delta}}{S_N - 1} \bigg) \bigg) \nonumber \\
&\leq \P \bigg( \frac{X_{1,N}(X_{1,N} - 1)}{S_N(S_N - 1)} \geq \frac{\delta^2}{(1 + \delta)^2} \cdot \frac{N - \frac{1+\delta}{\delta}}{N - 1} \bigg) \nonumber \\
&\leq \bigg(\frac{(1 + \delta)^2}{\delta^2} \cdot \frac{N - 1}{N - \frac{1+\delta}{\delta}} \bigg) E \bigg[\frac{X_{1,N}(X_{1,N} - 1)}{S_N(S_N - 1)} \bigg].
\end{align}
From (\ref{cNX}), (\ref{NPX}), and (\ref{PXS}) we get
\begin{equation}\label{Xtail}
N\P(X_{1,N} \geq \delta m_N) \leq \bigg(\frac{2 (1 + \delta)^2}{\delta^2} \cdot \frac{N - 1}{N - \frac{1+\delta}{\delta}} \bigg) c_N,
\end{equation}
which tends to zero as $N \rightarrow \infty$ by assumption.

For $1 \leq k \leq N$, let $$W_{k,N} = X_{k,N}\1_{\{X_{k,N} \leq \delta m_N\}} + \1_{\{X_{k,N} > \delta m_N\}}.$$ Let $U_N = W_{1,N} + \dots + W_{N,N}$, and let $\bar{U}_{N-1} = U_N - W_{1,N}$.  Equation (\ref{Xtail}) implies
\begin{equation}\label{STcompare}
\P(S_N \neq U_N) \leq N \P(X_{1,N} \neq W_{1,N}) \rightarrow 0 \quad \mbox{as }N \rightarrow \infty.
\end{equation}
Let $d_N = E[W_{1,N}]$, and let $a_N = N d_N$.  Then $E[U_N] = a_N$.
It follows from (\ref{mndef}) that $$\P(\bar{S}_{N-1} \leq m_N-1) < 1/2,$$ and therefore by Markov's Inequality,
\begin{align*}
1/2 &< \P(\bar{S}_{N-1} \geq m_N) \\
&\leq \P(\bar{U}_{N-1} \geq m_N) + (N-1)\P(X_{1,N} \neq W_{1,N}) \\
&\leq \frac{E[\bar{U}_{N-1}]}{m_N} + (N-1)\P(X_{1,N} \neq W_{1,N}) \\
&= \frac{(N-1)a_N}{Nm_N} + (N-1)\P(X_{1,N} \neq W_{1,N}).
\end{align*}
Because $(N-1) \P(X_{1,N} \neq W_{1,N}) \rightarrow 0$ by (\ref{STcompare}), it follows that for sufficiently large $N$, we have
\begin{equation}\label{amratio}
\frac{a_N}{m_N} \geq \frac{1}{3}.
\end{equation}

We now establish the result using a second moment argument.  Given $r = 1/\ell$ for some positive integer $\ell$ and an integer $k$ such that $1 \leq k \leq \ell$, define
$$U_{k,r,N} = W_{\lfloor (k-1)rN \rfloor + 1, N} + \dots + W_{\lfloor krN \rfloor, N}.$$
Note that $U_N = U_{1,1,N}$, and that $U_{k,r,N}$ is the sum of between $rN - 1$ and $rN + 1$ random variables.  Therefore,
\begin{equation}\label{ETk}
E[U_{k,rN}] \geq \Big(r - \frac{1}{N} \Big) a_N,
\end{equation}
and using that $W_{1,N} \geq 1$, we have
\begin{align}\label{VarT}
\mbox{Var}(U_{k,r,N}) &\leq (rN + 1)\mbox{Var}(W_{1,N}) \nonumber \\
&= (rN + 1) \big(E[W_{1,N}^2] - (E[W_{1,N}])^2\big) \nonumber \\
&\leq (rN + 1)E[W_{1,N}(W_{1,N} - 1)].
\end{align}
Using that $X_{1,N} \leq \delta m_N$ on the event $\{W_{1,N} - 1 \neq 0\}$, and that $W_{1,N}$ is independent of $\bar{S}_{N-1}$, we have
\begin{align}\label{Yfac1}
c_N &\geq N E \bigg[ \frac{W_{1,N}(W_{1,N} - 1)}{S_N^2} \bigg] \geq N E \bigg[ \frac{W_{1,N}(W_{1,N} - 1)}{(\delta m_N + \bar{S}_{N-1})^2} \bigg] \nonumber \\
&= N E[W_{1,N}(W_{1,N} - 1)] E \bigg[ \frac{1}{(\delta m_N + \bar{S}_{N-1})^2} \bigg].
\end{align}
Now
\begin{equation}\label{Yfac2}
E \bigg[ \frac{1}{(\delta m_N + \bar{S}_{N-1})^2} \bigg] \geq \P(\bar{S}_{N-1} = \bar{U}_{N-1}) E \bigg[ \frac{1}{(\delta m_N + \bar{S}_{N-1})^2} \Big| \bar{S}_{N-1} = \bar{U}_{N-1} \bigg].
\end{equation}
Let $Z_{1,N}, \dots, Z_{N,N}$ be independent random variables whose distribution is the conditional distribution of $X_{1,N}$ given $X_{1,N} \leq \delta m_N$.  Note that
\begin{equation}\label{Zmean}
E[Z_{1,N}] = E[W_{1,N}|X_{1,N} \leq \delta m_N] \leq \frac{E[W_{1,N}]}{\P(X_{1,N} \leq \delta m_N)} \leq \frac{a_N}{N \P(X_{1,N} \leq \delta m_N)}.
\end{equation}
Let $\bar{V}_{N-1} = Z_{2,N} + \dots + Z_{N,N}$.  Then, using Jensen's Inequality, followed by (\ref{Zmean}) and then (\ref{amratio}), for $N$ large enough that $\delta m_N > 1$ we have
\begin{align*}
E \bigg[ \frac{1}{(\delta m_N + \bar{S}_{N-1})^2} \Big| \bar{S}_{N-1} = \bar{U}_{N-1} \bigg] &= E \bigg[ \frac{1}{(\delta m_N + \bar{V}_{N-1})^2} \bigg] \\
&\geq \frac{1}{(\delta m_N + E[\bar{V}_{N-1}])^2} \\
&\geq \bigg(\delta m_N + \frac{(N-1) a_N}{N\P(X_{1,N} \leq \delta m_N)} \bigg)^{-2} \\
&\geq \frac{1}{a_N^2} \bigg(3 \delta + \frac{N-1}{N \P(X_{1,N} \leq \delta m_N)} \bigg)^{-2}.
\end{align*}
It follows from (\ref{STcompare}) that $\lim_{N \rightarrow \infty} (N-1)/(N\P(X_{1,N} \leq \delta m_N)) = 1$, and therefore for sufficiently large $N$, we have
$$E \bigg[ \frac{1}{(\delta m_N + \bar{S}_{N-1})^2} \Big| \bar{S}_{N-1} = \bar{U}_{N-1} \bigg] \geq \frac{1}{a_N^2(1 + 4 \delta)^2}.$$
Combining this result with (\ref{Yfac1}) and (\ref{Yfac2}), we get
$$c_N \geq N E[W_{1,N}(W_{1,N} - 1)] \P(\bar{S}_{N-1} = \bar{U}_{N-1}) \cdot \frac{1}{a_N^2 (1 + 4 \delta)^2},$$
and therefore
$$E[W_{1,N}(W_{1,N} - 1)] \leq \frac{c_N a_N^2 (1 + 4 \delta)^2}{N \P(\bar{S}_{N-1} = \bar{U}_{N-1})}.$$
By (\ref{VarT}),
\begin{equation}\label{VarTk}
\mbox{Var}(U_{k,r,N}) \leq \frac{(rN + 1)c_N a_N^2 (1 + 4 \delta)^2}{N \P(\bar{S}_{N-1} = \bar{U}_{N-1})}.
\end{equation}

Let $\eps > 0$.  Taking $r = 1$ and using Chebyshev's Inequality along with (\ref{STcompare}) and the assumption that $\lim_{N \rightarrow \infty} c_N = 0$, we get
$$\P \bigg( \bigg|\frac{S_N}{a_N} - 1 \bigg| > \eps \bigg) \leq \P(S_N \neq U_N) + \frac{\mbox{Var}(U_N)}{a_N^2 \eps^2} \leq \P(S_N \neq U_N) + \frac{(N+1) c_N (1 + 4 \delta)^2}{N\P(\bar{S}_{N-1} = \bar{U}_{N-1}) \eps^2} \rightarrow 0.$$
That is, we have $S_N/a_N \rightarrow_p 1$, which is part 1 of the result.  To prove part 2, we take $r = 1/3$ and note that we can have $$S_N - \max_{1 \leq k \leq N} X_{k,N} < \delta a_N$$ only if at least two of the random variables $U_{k,r,N}$, for $k \in \{1, 2, 3\}$, are less than $\delta a_N$.  Let $0 < \delta < 1/3$.  By (\ref{ETk}), (\ref{VarTk}), and Chebyshev's Inequality,
\begin{align*}
\P(U_{k,r,N} < \delta a_N) &\leq \P\bigg( \big|U_{k,r,N} - E[U_{k,r,N}] \big| > \Big(r - \delta - \frac{1}{N} \Big)a_N \bigg) \\
&\leq \Big(r - \delta - \frac{1}{N} \Big)^{-2} \frac{\mbox{Var}(U_{k,r,N})}{a_N^2} \\
&\leq \Big(r - \delta - \frac{1}{N} \Big)^{-2} \frac{(rN + 1)c_N (1 + 4 \delta)^2}{N \P(\bar{S}_{N-1} = \bar{U}_{N-1})}.
\end{align*}
Therefore, the probability that at least two of the random variables $U_{k,r,N}$, for $k \in \{1, 2, 3\}$, are less than $\delta a_N$ is bounded above by
$$3 \Big(r - \delta - \frac{1}{N} \Big)^{-4} \cdot \frac{(rN + 1)^2c_N^2 (1 + 4 \delta)^4}{N^2 \P(\bar{S}_{N-1} = \bar{U}_{N-1})^2}.$$
This expression tends to zero faster than $c_N$ because $\delta < 1/3$.
\end{proof}

\begin{lemma}\label{LLNlem}
Suppose $\lim_{N \rightarrow \infty} c_N = 0$.  For all $\eps > 0$ we have
$$\lim_{N \rightarrow \infty} \frac{N^2}{c_N} \P \bigg( \bigg| \frac{\nu_{k,N}}{N} - \frac{X_{k,N}}{S_N} \bigg| > \eps \mbox{ for some }k \in [N] \bigg) = 0.$$
\end{lemma}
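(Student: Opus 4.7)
The plan is to apply Hoeffding's inequality for hypergeometric sampling conditionally on the offspring counts, combined with the key observation that the discrepancy $|\nu_{k,N}/N - X_{k,N}/S_N|$ is automatically at most $\eps$ unless $X_{k,N}>\eps N$. Indeed, $\nu_{k,N}\le X_{k,N}$ and $S_N\ge N$ force both $\nu_{k,N}/N$ and $X_{k,N}/S_N$ to lie in $[0,X_{k,N}/N]$, so if $X_{k,N}\le\eps N$ both quantities belong to $[0,\eps]$ and their difference cannot exceed $\eps$. Therefore the event inside the probability in the lemma is contained in $\{\exists k:X_{k,N}>\eps N\text{ and }|\nu_{k,N}/N-X_{k,N}/S_N|>\eps\}$.

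Conditional on $(X_{1,N},\ldots,X_{N,N})$, the variable $\nu_{k,N}$ is hypergeometric with parameters $(S_N,X_{k,N},N)$ and conditional mean $NX_{k,N}/S_N$, so Hoeffding's inequality for sampling without replacement gives
\[
\P\!\left(\left|\frac{\nu_{k,N}}{N}-\frac{X_{k,N}}{S_N}\right|>\eps\,\Big|\,X_{1,N},\ldots,X_{N,N}\right)\le 2e^{-2N\eps^2}.
\]
Combining this with the observation above, taking expectations, using a union bound over $k\in[N]$, and appealing to exchangeability yields
\[
\P\!\left(\exists k:\left|\tfrac{\nu_{k,N}}{N}-\tfrac{X_{k,N}}{S_N}\right|>\eps\right)\le 2Ne^{-2N\eps^2}\,\P(X_{1,N}>\eps N).
\]

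It remains to bound $\P(X_{1,N}>\eps N)$ in terms of $c_N$. The idea is to use identity \eqref{cNX} together with Lemma~\ref{iidXS}, whose proof in fact gives $\P(S_N>2a_N)=O(c_N)$. Restricting the expectation defining $c_N/N$ to the event $\{X_{1,N}>\eps N\}\cap\{S_N\le 2a_N\}$, where the integrand $X_{1,N}(X_{1,N}-1)/(S_N(S_N-1))$ is at least of order $\eps^2 N^2/a_N^2$, leads to $\P(X_{1,N}>\eps N)\lesssim c_N\bigl(1+a_N^2/(\eps^2 N^3)\bigr)$. Substituting back gives
\[
\frac{N^2}{c_N}\,\P(\exists k:\ldots)=O\bigl((N^3+a_N^2)\,e^{-2N\eps^2}\bigr)\longrightarrow 0,
\]
since $a_N$ grows at most polynomially in $N$ in all regimes of interest here. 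The main technical hurdle is the tension between the blow-up factor $1/c_N$ and the Hoeffding bound; it is resolved by noting that the families capable of triggering a large discrepancy are precisely the ones that drive $c_N$, so their rarity is already encoded in $c_N$ itself.
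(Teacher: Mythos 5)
Your reduction to the event $\{X_{k,N}>\eps N\}$, the conditional Hoeffding/Chv\'atal bound for the hypergeometric variable $\nu_{k,N}$, and the union bound are all correct, and up to that point you are following essentially the same strategy as the paper. The genuine gap is in the final step: the conclusion $(N^3+a_N^2/\eps^2)e^{-2N\eps^2}\to 0$ rests on the claim that ``$a_N$ grows at most polynomially in $N$ in all regimes of interest,'' but the lemma has no such hypothesis --- its only assumption is $c_N\to 0$, and it is invoked in exactly that generality (in Lemma \ref{nu12tail}, in the convergence criterion for the Cannings subclass, and in the proof of Theorem \ref{charthm}, where the offspring law is arbitrary). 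The quantity $a_N=NE[W_{1,N}]$ from Lemma \ref{iidXS} need not be polynomially bounded under $c_N\to 0$: for instance, if $X_{i,N}=\lceil e^{e^N}W_i\rceil$ with $W_i$ i.i.d.\ exponential, then $c_N\asymp 1/N\to 0$ while $a_N\asymp Ne^{e^N}$, so your final bound diverges; moreover your intermediate estimate $\P(X_{1,N}>\eps N)\lesssim c_N\bigl(1+a_N^2/(\eps^2N^3)\bigr)$ is vacuous there, since in that example $\P(X_{1,N}>\eps N)\to 1$. (A smaller point: $\P(S_N>2a_N)=O(c_N)$ can indeed be extracted from the proof of Lemma \ref{iidXS}, via (\ref{Xtail}) and the Chebyshev step, but it is not part of the lemma's statement, so it would need to be argued explicitly.)

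The way to close the gap --- and the route the paper takes --- is to avoid $\P(X_{1,N}>\eps N)$ and $a_N$ altogether. On the event $\{S_N=N\}$ every $\nu_{k,N}=X_{k,N}=1$ and the discrepancy is zero, so the bad event is contained in $\{S_N>N\}$, giving $\P(\exists k:\cdots)\leq 2Ne^{-2\eps^2N}\P(S_N>N)$. Conditional on $S_N>N$, some family has at least two offspring, so the probability that two randomly chosen sampled individuals share a parent is at least $2/(N(N+1))$; hence $\P(S_N>N)\leq\tfrac12N(N+1)c_N$, which is exactly (\ref{cNlower}). This cancels the factor $1/c_N$ identically, leaving a bound of order $N^4e^{-2\eps^2N}\to 0$ with no control on $a_N$ required. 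If you want to keep your decomposition via $\{X_{k,N}>\eps N\}$, you would still need a bound of this type relating the tail of $X_{1,N}$ to $c_N$ with only polynomial factors, which is precisely what your $a_N$-based estimate fails to deliver in general.
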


\begin{proof}
Suppose $Y$ is the number of red balls drawn, when $n$ balls are chosen from an urn containing $b$ balls, of which $r$ are red.  Chv\'atal \cite{chv} showed that if $\eps > 0$, then
\begin{equation}\label{hypg}
\P \bigg(Y \geq \Big(\frac{r}{b} + \eps \Big)n \bigg) \leq e^{-2 \eps^2 n}.
\end{equation}
Conditional on $X_{k,N}$ and $S_N$, we see that $\nu_{k,N}$ can be interpreted as the number of red balls drawn, when $N$ balls are chosen from an urn containing $S_N$ balls, of which $X_{k,N}$ are red.  By applying (\ref{hypg}) on the event that $S_N > N$, and noting that on the event that $S_N = N$, we have $\nu_{k,N} =  X_{k,N} = 1$ for all $k$, we get
$$\P \bigg( \bigg| \frac{\nu_{k,N}}{N} - \frac{X_{k,N}}{S_N} \bigg| > \eps N \mbox{ for some }k \in [N] \bigg) \leq 2N e^{-2 \eps^2 N} \P(S_N > N).$$
Conditional on $S_N > N$, the probability that two randomly chosen individuals have the same ancestor is at least $2/(N(N+1))$, so
\begin{equation}\label{cNlower}
c_N \geq \frac{2}{N(N+1)} \P(S_N > N).
\end{equation}
The result follows.
\end{proof}

\begin{lemma}\label{nu12tail}
Suppose $\lim_{N \rightarrow \infty} c_N = 0$. For all $\eps > 0$, we have
\begin{equation}\label{tail1}
\lim_{N \rightarrow \infty} \frac{N^2}{c_N} \P \big( \nu_{1,N} \geq N \eps, \: \nu_{2,N} \geq N \eps \big) = 0.
\end{equation}
This in turn implies
\begin{equation}\label{tail2}
\lim_{N \rightarrow \infty} \frac{E[(\nu_{1,N})_2 (\nu_{2,N})_2]}{N^2 c_N} = 0.
\end{equation}
\end{lemma}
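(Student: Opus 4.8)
The plan is to prove \eqref{tail1} first and then deduce \eqref{tail2} from it.

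\emph{Reduction for \eqref{tail1}.} Fix $\eps>0$; we may assume $\eps$ is small enough that $\delta:=\eps/2<1/4$. First I would invoke Lemma~\ref{LLNlem}: on the event that $|\nu_{k,N}/N-X_{k,N}/S_N|\le\delta$ for every $k\in[N]$, the inequalities $\nu_{1,N}\ge N\eps$ and $\nu_{2,N}\ge N\eps$ force $X_{1,N}\ge\delta S_N$ and $X_{2,N}\ge\delta S_N$, while the complement of that event has probability $o(c_N/N^2)$; hence it suffices to show $N^2 c_N^{-1}\,\P(X_{1,N}\ge\delta S_N,\,X_{2,N}\ge\delta S_N)\to0$. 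Writing $\bar S_N:=X_{3,N}+\dots+X_{N,N}$, so that $\bar S_N\ge N-2$, $S_N=X_{1,N}+X_{2,N}+\bar S_N$, and $X_{1,N},X_{2,N},\bar S_N$ are independent, the event in question is contained in $\{X_{1,N}\ge\delta\bar S_N,\,X_{2,N}\ge\delta\bar S_N\}$, and conditioning on $\bar S_N$ gives
$$\P(X_{1,N}\ge\delta S_N,\ X_{2,N}\ge\delta S_N)\ \le\ E\big[G(\bar S_N)^2\big],\qquad G(s):=\P(X_{1,N}\ge\delta s).$$

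\emph{The split.} I would bound $E[G(\bar S_N)^2]$ by splitting according to whether $\bar S_N\ge m_N/2$, where $m_N\ (\ge N-2)$ is a median of $\bar S_N$. On $\{\bar S_N\ge m_N/2\}$, monotonicity of $G$ and then independence give $E\big[G(\bar S_N)^2\1_{\{\bar S_N\ge m_N/2\}}\big]\le G(m_N/2)\,E[G(\bar S_N)]=G(m_N/2)\,\P(X_{1,N}\ge\delta\bar S_N)$. Here $\P(X_{1,N}\ge\delta\bar S_N)\lesssim c_N/N$: on $\{X_{2,N}\le S_N/2\}$ the event forces $X_{1,N}\ge c S_N$ for a fixed $c>0$, and both $\{X_{1,N}\ge cS_N\}$ and $\{X_{2,N}>S_N/2\}$ have probability $\lesssim c_N/N$ by Markov's inequality applied to $X_{1,N}(X_{1,N}-1)/(S_N(S_N-1))$ exactly as in \eqref{PXS}, using \eqref{cNX}. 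Moreover $N\,G(m_N/2)=N\,\P(X_{1,N}\ge(\delta/2)m_N)\to0$ by \eqref{Xtail} (established en route to Lemma~\ref{iidXS}, its proof going through with $N-2$ in place of $N-1$). Hence $N^2c_N^{-1}\,E[G(\bar S_N)^2\1_{\{\bar S_N\ge m_N/2\}}]\lesssim N\,G(m_N/2)\to0$.

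\emph{Main obstacle.} The remaining piece $E[G(\bar S_N)^2\1_{\{\bar S_N<m_N/2\}}]$ is where I expect the real work. Crudely, $G(\bar S_N)\le G(N-2)\le\P(X_{1,N}\ge2)$ on this event, so by \eqref{lowcN} it is at most $\P(X_{1,N}\ge2)^2\,\P(\bar S_N<m_N/2)\lesssim Nc_N\,\P(\bar S_N<m_N/2)$, and it then suffices to show $\P(\bar S_N<m_N/2)$ decays fast enough (faster than a suitable negative power of $N$ after multiplying by $N^2/c_N$). This is a lower-tail concentration estimate for the partial sum $\bar S_N$, which I would obtain — mirroring the proof of Lemma~\ref{iidXS} — by truncating the $X_{k,N}$ at level $\delta m_N$ and applying Bernstein's inequality to the truncated sum; for genuinely heavy-tailed offspring laws one may need to replace the single cut at $m_N/2$ by a dyadic decomposition of $\{\bar S_N<m_N/2\}$ and combine the resulting tail bounds with the median estimate \eqref{Xtail}. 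Checking that this concentration furnishes enough decay uniformly over all admissible models is the one delicate point; the rest is routine.

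\emph{Deducing \eqref{tail2} from \eqref{tail1}.} Fix $\eps>0$ and write
$$E[(\nu_{1,N})_2(\nu_{2,N})_2]= E\big[(\nu_{1,N})_2(\nu_{2,N})_2\1_{\{\nu_{1,N}\ge N\eps,\,\nu_{2,N}\ge N\eps\}}\big]+E\big[(\nu_{1,N})_2(\nu_{2,N})_2\1_{\{\nu_{1,N}<N\eps\}}\big]+E\big[(\nu_{1,N})_2(\nu_{2,N})_2\1_{\{\nu_{2,N}<N\eps,\,\nu_{1,N}\ge N\eps\}}\big].$$
Since $(\nu_{k,N})_2\le N^2$, the first term is at most $N^4\,\P(\nu_{1,N}\ge N\eps,\nu_{2,N}\ge N\eps)=o(N^2c_N)$ by \eqref{tail1}. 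For the second term, on $\{\nu_{1,N}<N\eps\}$ we have $(\nu_{1,N})_2\le\nu_{1,N}^2\le N\eps\,\nu_{1,N}$, so it is at most $N\eps\,E[\nu_{1,N}(\nu_{2,N})_2]$; and by exchangeability, using $\sum_{j\in[N]}\nu_{j,N}=N$ and $E\big[\sum_{j\in[N]}(\nu_{j,N})_2\big]=N(N-1)c_N$,
$$N(N-1)\,E[\nu_{1,N}(\nu_{2,N})_2]=E\Big[\sum_{j\in[N]}(\nu_{j,N})_2(N-\nu_{j,N})\Big]\le N\,E\Big[\sum_{j\in[N]}(\nu_{j,N})_2\Big]=N^2(N-1)c_N,$$
whence $E[\nu_{1,N}(\nu_{2,N})_2]\le Nc_N$ and the second term is $\le\eps N^2c_N$; the third term is bounded by $\eps N^2c_N$ in the same way. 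Dividing by $N^2c_N$, letting $N\to\infty$ and then $\eps\to0$ gives \eqref{tail2}.
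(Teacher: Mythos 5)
Your deduction of \eqref{tail2} from \eqref{tail1} is correct and is essentially the paper's argument, and the first half of your argument for \eqref{tail1} (the reduction via Lemma \ref{LLNlem}, the passage to $\P(X_{1,N}\ge\delta\bar{S}_N,\,X_{2,N}\ge\delta\bar{S}_N)=E[G(\bar{S}_N)^2]$ by conditioning on $\bar{S}_N$, and the treatment of the piece $\{\bar{S}_N\ge m_N/2\}$) is sound. However, the step you label the ``main obstacle'' is a genuine gap, not merely a delicate point. After bounding $G(\bar{S}_N)\le\P(X_{1,N}\ge 2)$ and invoking \eqref{lowcN}, what you need is $N^3\,\P(\bar{S}_N<m_N/2)\to 0$, and this simply does not follow from $c_N\to 0$. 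For instance, let $\P(X_{1,N}=1)=1-\mu_N/N$ and $\P(X_{1,N}=N^{10})=\mu_N/N$ with $\mu_N=\log\log N$. A direct computation gives $c_N\asymp 1/\mu_N\to 0$, while $\bar{S}_N=(N-2)+(N^{10}-1)B_N$ with $B_N\sim\mathrm{Bin}(N-2,\mu_N/N)$, whose median is of order $N^{10}\mu_N$, so that $\P(\bar{S}_N<m_N/2)\ge\P(B_N=0)=(1-\mu_N/N)^{N-2}\sim(\log N)^{-1}$. This is not $o(N^{-\alpha})$ for any $\alpha>0$, so no Bernstein-type or dyadic truncation argument can deliver the decay you are asking for: the statement you reduce to is false under the lemma's hypotheses. (This does not contradict the lemma; in such models, conditionally on $\bar{S}_N$ being atypically small, the probability that $X_{1,N}$ and $X_{2,N}$ both exceed $\delta\bar{S}_N$ is itself tiny, and it is exactly this correlation that your crude bound $G(\bar{S}_N)\le\P(X_{1,N}\ge 2)$ combined with the lossy inequality \eqref{lowcN} throws away.)

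The structural problem is that by bounding $\P(\nu_{1,N}\ge N\eps,\,\nu_{2,N}\ge N\eps)$ directly and multiplying by $N^2/c_N$, every exceptional event you introduce must have probability of order $o(N^{-2}c_N)$, and the hypotheses only supply much weaker control. The paper's proof sidesteps this with a counting device: letting $\Phi_N$ be the set of pairs $j<k$ with $\nu_{j,N},\nu_{k,N}\ge N\eps$, the constraint $\sum_k\nu_{k,N}=N$ forces $|\Phi_N|\le\eps^{-2}$, and exchangeability gives $\P(\nu_{1,N}\ge N\eps,\,\nu_{2,N}\ge N\eps)=\tfrac{2}{N(N-1)}E[|\Phi_N|]$, so it suffices to show $E[|\Phi_N|]=o(c_N)$. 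The exceptional events (the complement of the event in Lemma \ref{LLNlem} and of $\{S_N-\max_k X_{k,N}\ge\delta a_N\}$ from Lemma \ref{iidXS}) then enter only with the constant weight $\eps^{-2}$ and need only have probability $o(c_N)$, which is precisely what those lemmas provide (and, in the example above, all that is true), while on the good event both $X_{1,N}$ and $X_{2,N}$ must exceed $\eps\delta a_N/2$, giving a contribution at most $\binom{N}{2}\P(X_{1,N}>\eps\delta a_N/2)^2\lesssim c_N^2$ by \eqref{Xtail} and \eqref{amratio}. If you wish to keep your conditioning-on-$\bar{S}_N$ skeleton you would at the very least have to retain the factor $\P(X_{1,N}\ge\delta(N-2))^2$ rather than degrade it via \eqref{lowcN}, and even then a uniform treatment of $\{\bar{S}_N<m_N/2\}$ over all admissible offspring laws is not routine; the pair-counting argument is the cleaner route.
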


\begin{proof}
Let $\Phi_N$ be the set of all ordered pairs $(j,k)$ with $1 \leq j < k \leq N$ such that $\nu_{j,N} \geq N \eps$ and $\nu_{k,N} \geq N \eps$.  Let $|\Phi_N|$ be the cardinality of $\Phi_N$.  Note that at most $1/\eps$ of the random variables $\nu_{k,N}$ can exceed $N \eps$, and so
\begin{equation}\label{PhiNbound}
|\Phi_N| \leq 1/\eps^2.
\end{equation}
Note that
\begin{equation}\label{nuPhi}
\P \big( \nu_{1,N} \geq N \eps, \: \nu_{2,N} \geq N \eps \big) = \frac{2}{N(N-1)} E \big[ |\Phi_N| \big].
\end{equation}
Define the events
\begin{equation}\label{ANdef}
A_N = \bigg\{ \bigg| \frac{\nu_{k,N}}{N} - \frac{X_{k,N}}{S_N} \bigg| \leq \frac{\eps}{2} \mbox{ for all }k \in [N] \bigg\}
\end{equation}
and
\begin{equation}\label{BNdef}
B_N = \Big\{ S_N - \max_{1 \leq k \leq N} X_{k,N} \geq \delta a_N \Big\},
\end{equation}
where $\delta$ is the constant from Lemma \ref{iidXS}.  Also, note that
$X_{k,N}/S_N > \eps/2$ is equivalent to $X_{k,N} > (S_N - X_{k,N}) \eps/(2 - \eps)$, which on $B_N$ implies that $X_{k,N} > \eps \delta a_N/2$.  Therefore, using (\ref{PhiNbound}),
\begin{align*}
E\big[ |\Phi_N| \big] &= E \big[ |\Phi_N| \1_{A_N^c}\big] + E \big[ |\Phi_N| \1_{B_N^c}\big] + E \big[ |\Phi_N| \1_{A_N \cap B_N} \big] \\
&\leq \frac{\P(A_N^c) + \P(B_N^c)}{\eps^2} + \binom{N}{2} \P\bigg(X_{1,N} > \frac{\eps \delta a_N}{2}\bigg)^2.
\end{align*}
Now $\P(A_N^c) \ll N^{-2}c_N$ by Lemma \ref{LLNlem} and $\P(B_N^c) \ll c_N$ by part 2 of Lemma \ref{iidXS}.  Also, by (\ref{Xtail}) and (\ref{amratio}), we have $\P(X_{1,N} > \eps \delta a_N/2) \lesssim N^{-1} c_N$.  Combining these observations, and using that $c_N \rightarrow 0$ by assumption, we get
$$\lim_{N \rightarrow \infty} c_N^{-1} E\big[ |\Phi_N| \big] = 0.$$  The claim (\ref{tail1}) now follows from (\ref{nuPhi}).

To prove (\ref{tail2}), let $\eps > 0$, and note that
\begin{align*}
\frac{E[(\nu_{1,N})_2 (\nu_{2,N})_2]}{N^2 c_N} &\leq \frac{E\big[(\nu_{1,N})_2 (\nu_{2,N})_2 \1_{\{\nu_{1,N} \geq N \eps, \, \nu_{2,N} \geq N \eps\}}\big]}{N^2 c_N} \\
&\hspace{.3in}+ \frac{E\big[(\nu_{1,N})_2 (\nu_{2,N})_2 \1_{\{\nu_{1,N} < N \eps\}}\big]}{N^2 c_N} + \frac{E\big[(\nu_{1,N})_2 (\nu_{2,N})_2 \1_{\{\nu_{2,N} < N \eps\}}\big]}{N^2 c_N} \\
&\leq \frac{N^4 \P(\nu_{1,N} \geq N \eps, \, \nu_{2,N} \geq N \eps)}{N^2 c_N} + \frac{2 (N \eps) E[(\nu_{1,N})_2 \nu_{2,N}]}{N^2 c_N}.
\end{align*}
By exchangeability and the fact that $\nu_{1,N} + \dots + \nu_{N,N} = N$,
$$E[(\nu_{1,N})_2 \nu_{2,N}] = \frac{1}{N-1} E[(\nu_{1,N})_2 (\nu_{2,N} + \dots + \nu_{N,N})] \leq \frac{N}{N-1} E[(\nu_{1,N})_2] = N c_N.$$
It now follows, using (\ref{tail1}), that
$$\limsup_{N \rightarrow \infty} \frac{E[(\nu_{1,N})_2 (\nu_{2,N})_2]}{N^2 c_N} \leq \limsup_{N \rightarrow \infty} \frac{N^2}{c_N} \P(\nu_{1,N} \geq N \eps, \, \nu_{2,N} \geq N \eps) + \limsup_{N \rightarrow \infty} 2 \eps \leq 2 \eps.$$  Because $\eps > 0$ was arbitrary, the result (\ref{tail2}) follows.
\end{proof}

\begin{lemma}
Suppose $\lim_{N \rightarrow \infty} c_N = 0$.  Suppose $\Lambda$ is a probability measure on $[0,1]$.  Then, the ancestral processes $(\Psi_{n,N}(\lfloor t/c_N \rfloor))_{t \geq 0}$ in the Cannings model described above converge in $\Db_{\cP_n}[0, \infty)$ to the $\Lambda$-coalescent for all $n$ if and only if for all $x \in (0,1)$ such that $\Lambda(\{x\}) = 0$, we have
\begin{equation}\label{X1cond}
\lim_{N \rightarrow \infty} \frac{N}{c_N} \P \bigg( \frac{X_{1,N}}{a_N} > \frac{x}{1-x} \bigg) = \int_x^{1} y^{-2} \: \Lambda({\rm d}y).
\end{equation}
\end{lemma}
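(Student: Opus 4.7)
The plan is to apply Theorem~\ref{Cannings}. Its first condition is exactly the hypothesis $c_N\to0$, and its second is the content of Lemma~\ref{nu12tail}; the claim therefore reduces to showing that Theorem~\ref{Cannings}'s third condition
\[
\lim_{N\to\infty}\frac{N}{c_N}\,\P(\nu_{1,N}>Nx)=\int_x^1 y^{-2}\,\Lambda({\rm d}y)=:F(x),
\]
at every $\Lambda$-continuity point $x\in(0,1)$, is equivalent to \eqref{X1cond}.

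First I would use Lemma~\ref{LLNlem} to replace $\nu_{1,N}/N$ with $X_{1,N}/S_N$: the sandwich $|\P(\nu_{1,N}/N>x)-\P(X_{1,N}/S_N>x')|$ is controlled by $\P(|\nu_{1,N}/N-X_{1,N}/S_N|>\eps)=o(c_N/N^2)$ for $x'$ within $\eps$ of $x$, so after multiplication by $N/c_N$ and passing $\eps\to0$ through $\Lambda$-continuity points (using continuity of $F$), Cannings' third condition is equivalent to $(N/c_N)\,\P(X_{1,N}/S_N>x)\to F(x)$. Second, writing $t:=x/(1-x)$ and $\bar S_{N-1}:=S_N-X_{1,N}$, the identity $\{X_{1,N}/S_N>x\}=\{X_{1,N}>t\bar S_{N-1}\}$ together with the independence of $X_{1,N}$ and $\bar S_{N-1}$ gives
\[
\P(X_{1,N}/S_N>x)=E\!\left[G_N(t\bar S_{N-1}/a_N)\right],\qquad G_N(r):=\P(X_{1,N}/a_N>r),
\]
and Lemma~\ref{iidXS} part 1 applied to the i.i.d.\ sum $\bar S_{N-1}$ yields $\bar S_{N-1}/a_N\to_p 1$. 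For the forward direction, I would split the expectation by where $\bar S_{N-1}/a_N$ lies. On the middle shell $\{|\bar S_{N-1}/a_N-1|\le\eps\}$, monotonicity of $G_N$ sandwiches the integrand between $(N/c_N)\,G_N(t(1\pm\eps))$, which by \eqref{X1cond} converge to $F(\psi(t(1\pm\eps)))$ with $\psi(r):=r/(r+1)$ and pinch to $F(x)$ as $\eps\to0$; on the upper shell $\{\bar S_{N-1}/a_N>1+\eps\}$, the integrand is bounded by the finite quantity $(N/c_N)\,G_N(t(1+\eps))$ times a shell probability that tends to~$0$.

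The main obstacle is the lower shell $\{\bar S_{N-1}/a_N<1-\eps\}$, where $G_N(t\bar S_{N-1}/a_N)$ may be as large as $G_N(0)=1$ and $(N/c_N)\P(\bar S_{N-1}/a_N<1-\eps)$ need not vanish. I would handle this by splitting further using Lemma~\ref{iidXS} part 2: since $\bar S_{N-1}\ge S_N-\max_k X_{k,N}$, one has $\P(\bar S_{N-1}<\delta a_N)=o(c_N)$ for some $\delta\in(0,1-\eps)$. On the intermediate part $\{\bar S_{N-1}/a_N\in[\delta,1-\eps)\}$, the bound $G_N(t\bar S_{N-1}/a_N)\le G_N(t\delta)=O(c_N/N)$ combined with vanishing shell probability is negligible after scaling; on the deep part $\{\bar S_{N-1}<\delta a_N\}$, an exchangeability argument splits on whether $X_{1,N}=\max_k X_{k,N}$ (yielding $(1/N)\cdot o(c_N)=o(c_N/N)$) or not (in which case $X_{1,N}\le\bar S_{N-1}<\delta a_N$ forces $t<1$ and confines $X_{1,N}/a_N$ to a short range where independence and \eqref{X1cond} give $o(c_N/N)$). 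For the reverse direction, the bound $\P(X_{1,N}/a_N>r)\le\P(X_{1,N}/S_N>r/(r+1+\eps))/(1-o(1))$, obtained by conditioning on $\{\bar S_{N-1}/a_N\le 1+\eps\}$ and using independence, establishes vague tightness of $(N/c_N)\,\P(X_{1,N}/a_N\in\cdot)$ on $(0,\infty)$, and any vague subsequential limit is then pinned down to $y^{-2}\Lambda({\rm d}y)$ by reapplying the already-established forward direction.
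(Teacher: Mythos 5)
Your reduction (conditions 1 and 2 of Theorem \ref{Cannings} via the hypothesis and Lemma \ref{nu12tail}, then Lemma \ref{LLNlem} to trade $\nu_{1,N}/N$ for $X_{1,N}/S_N$) matches the paper, but from there your route is genuinely different: you analyse $\P(X_{1,N}/S_N>x)=E[G_N(t\bar S_{N-1}/a_N)]$ directly by a shell decomposition in $\bar S_{N-1}/a_N$, whereas the paper never needs a per-individual bound on the bad shell because it passes through the union of the events $\{\nu_{k,N}>Nx\}$ and inclusion--exclusion (see \eqref{PIE}), so that $\P(A_N^c)$, $\P(B_N^c)$ enter only once rather than multiplied by $N$. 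Precisely at the point where this matters, your argument has a gap: in the deep shell $\{\bar S_{N-1}<\delta a_N\}$, sub-case ``$X_{1,N}$ is not the maximum,'' the stated mechanism (``independence and \eqref{X1cond} on a short range'') cannot deliver the required $o(c_N/N)$. On that event the threshold is $t\bar S_{N-1}$, which can be as small as $t(N-1)$, and nothing forbids $a_N\gg N$ (e.g.\ $X_{1,N}=1$ or $K_N$ with probability $1/2$ each, $K_N\gg N$, for which $c_N\to0$); \eqref{X1cond} only controls $\P(X_{1,N}>\theta a_N)$ for fixed $\theta>0$, so conditioning on $\bar S_{N-1}=s$ with $s/a_N$ possibly $\to 0$ gives a conditional probability that need not be $O(1/N)$, and independence then yields at best $o(c_N)$ (the mass of the deep shell), short of $o(c_N/N)$ by exactly the factor $N$ you must absorb. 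Your ``max'' sub-case also needs a word about ties, and $\bar S_{N-1}/a_N\to_p1$ is not literally Lemma \ref{iidXS} applied to $\bar S_{N-1}$; as in the paper it follows from $X_{1,N}/a_N\to_p0$, i.e.\ \eqref{Xtail} and \eqref{amratio}.

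The needed bound is nevertheless true, and your own exchangeability idea repairs both sub-cases at once: on $\{X_{1,N}>t\bar S_{N-1}\}\cap\{\bar S_{N-1}<\delta a_N\}$ one has $X_{1,N}>xS_N$ and $S_N-\max_{1\le k\le N}X_{k,N}\le\bar S_{N-1}<\delta a_N$; since at most $1/x$ indices $j$ can satisfy $X_{j,N}>xS_N$, exchangeability gives $\P\big(X_{1,N}>t\bar S_{N-1},\,\bar S_{N-1}<\delta a_N\big)\le (xN)^{-1}\P\big(S_N-\max_k X_{k,N}<\delta a_N\big)=o(c_N/N)$ by part 2 of Lemma \ref{iidXS} (this also disposes of ties). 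With that substitution, your forward shell argument goes through, and your converse via tightness of $(N/c_N)\P(X_{1,N}/a_N\in\cdot)$ and identification of vague subsequential limits is a legitimate, somewhat slicker alternative to the paper's explicit two-sided $\eps,\delta,\theta$ sandwich, provided you note that the forward argument uses only finiteness of the limiting tails, so it applies to an arbitrary subsequential limit and not just to a probability measure $\Lambda$.
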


\begin{proof}
We need to check the three conditions of Theorem \ref{Cannings}.  Condition 1 holds by assumption, and condition 2 holds by Lemma  \ref{nu12tail}.

It remains to show that (\ref{X1cond}) is equivalent to condition 3 of Theorem \ref{Cannings}.  Define the events $A_N$ as in (\ref{ANdef}) with $\eps$ in place of $\eps/2$, and define $B_N$ as in (\ref{BNdef}).  Let $\bar{S}_{N-1} = S_N - X_{1,N}$.  Let $x \in (0, 1)$.  We have
\begin{align*}
\P(\nu_{1,N} > Nx) &\geq \P \bigg( A_N \cap \bigg\{\frac{X_{1,N}}{S_N} > x + \eps \bigg\} \bigg) \\
&\geq \P \bigg( A_N \cap \big\{ \bar{S}_{N-1} \leq (1 + \delta) a_N \big\} \cap \bigg\{ \frac{X_{1,N}}{a_N} \geq \frac{(x + \eps)(1 + \delta)}{1 - x - \eps} \bigg\} \bigg) \\
&\geq \P(\bar{S}_{N-1} \leq (1 + \delta)a_N) \P \bigg(\frac{X_{1,N}}{a_N} \geq \frac{(x + \eps)(1 + \delta)}{1 - x - \eps} \bigg) - \P(A_N^c).
\end{align*}
Note that $\P(A_N^c) \ll N^{-2} c_N$ by Lemma \ref{LLNlem}.  Also, because (\ref{Xtail}) and (\ref{amratio}) imply $X_{1,N}/a_N \rightarrow_p 0$, it follows from Lemma~\ref{iidXS} that $\bar{S}_{N-1}/a_N \rightarrow_p 1$.  Therefore,
\begin{equation}\label{infnu}
\liminf_{N \rightarrow \infty} \frac{N}{c_N} \P(\nu_{1,N} > Nx) \geq \liminf_{N \rightarrow \infty} \frac{N}{c_N} \P \bigg(\frac{X_{1,N}}{a_N} \geq \frac{(x + \eps)(1 + \delta)}{1 - x - \eps} \bigg)
\end{equation}
and
\begin{equation}\label{supX}
\limsup_{N \rightarrow \infty} \frac{N}{c_N} \P \bigg(\frac{X_{1,N}}{a_N} \geq \frac{(x + \eps)(1 + \delta)}{1 - x - \eps}\bigg) \leq \limsup_{N \rightarrow \infty} \frac{N}{c_N} \P(\nu_{1,N} > Nx).
\end{equation}

The other direction is more involved.  By exchangeability and the inclusion-exclusion formula,
\begin{equation}\label{PIE}
N \P(\nu_{k,N} > Nx) - \binom{N}{2} \P(\nu_{1,N} > Nx, \: \nu_{2,N} > Nx) \leq \P \bigg( \bigcup_{k=1}^N \{\nu_{k,N} > Nx\} \bigg).
\end{equation}
Now
\begin{align}\label{Punion}
\P \bigg( \bigcup_{k=1}^N \{\nu_{1,N} > N x\} \bigg) &\leq \P(A_N^c) + \P(B_N^c) + \P \bigg(B_N \cap \bigcup_{k=1}^N \bigg\{ \frac{X_{k,N}}{S_N} > x - \eps \bigg\} \bigg) \nonumber \\
&\leq \P(A_N^c) + \P(B_N^c) + N \P \bigg(B_N \cap \bigg\{ \frac{X_{1,N}}{S_N} > x - \eps \bigg\} \bigg).
\end{align}
Note that $\bar{S}_{N-1} \geq \delta a_N$ on $B_N$.  Then
\begin{align}\label{BXS}
&\P \bigg(B_N \cap \bigg\{ \frac{X_{1,N}}{S_N} > x - \eps \bigg\} \bigg) \nonumber \\
&\hspace{.2in}\leq \P \bigg( \big\{ \delta a_N \leq \bar{S}_{N-1} \leq (1 - \delta) a_N \big\} \cap \bigg\{ \frac{X_{1,N}}{X_{1,N} + \delta a_N} > x - \eps \bigg\} \bigg) \nonumber \\
&\hspace{1in}+ \P \bigg( \big\{ \bar{S}_{N-1} > (1 - \delta) a_N \big\} \cap \bigg\{ \frac{X_{1,N}}{X_{1,N} + (1 - \delta) a_N} > x - \eps \bigg\} \bigg) \nonumber \\
&\hspace{.2in}\leq \P\big(\bar{S}_{N-1} \leq (1 - \delta) a_N \big) \P \bigg( \frac{X_{1,N}}{a_N} \geq \frac{(x - \eps) \delta}{1 - x + \eps}\bigg) + \P \bigg( \frac{X_{1,N}}{a_N} \geq \frac{(x - \eps)(1 - \delta)}{1 - x + \eps} \bigg),
\end{align}
and plugging this back into (\ref{PIE}) and (\ref{Punion}), we get
\begin{align}\label{5terms}
N\P(\nu_{1,N} > Nx) &\leq \P(A_N^c) + \P(B_N^c) + \binom{N}{2} \P(\nu_{1,N} > Nx, \: \nu_{2,N} > Nx) \nonumber \\
&\hspace{1in}+ N\P\big(\bar{S}_{N-1} \leq (1 - \delta) a_N \big) \P \bigg( \frac{X_{1,N}}{a_N} \geq \frac{(x - \eps) \delta}{1 - x + \eps}\bigg) \nonumber \\
&\hspace{2.3in}+ N\P \bigg( \frac{X_{1,N}}{a_N} \geq \frac{(x - \eps)(1 - \delta)}{1 - x + \eps} \bigg).
\end{align}
We now show that the first four terms on the right-hand side of (\ref{5terms}) are small.  We have $\P(A_N^c) \ll N^{-2}c_N$ by Lemma \ref{LLNlem}, and $\P(B_N^c) \ll c_N$ by part 2 of Lemma \ref{iidXS}.  We have $\P(\nu_{1,N} > Nx, \: \nu_{2,N} > Nx) \ll N^{-2} c_N$ by Lemma \ref{nu12tail}.  Recall also that $\bar{S}_{N-1}/a_N \rightarrow_p 1$, and therefore $\P(\bar{S}_{N-1} \leq (1 - \delta) a_N) \rightarrow 0$, while we have
$\P(X_{1,N} \geq \theta a_N) \lesssim N^{-1} c_N$ for all $\theta > 0$ by (\ref{Xtail}) and (\ref{amratio}).
Thus, we get
\begin{equation}\label{supnu}
\limsup_{N \rightarrow \infty} \frac{N}{c_N} \P(\nu_{1,N} > Nx) \leq \limsup_{N \rightarrow \infty} \frac{N}{c_N} \P \bigg( \frac{X_{1,N}}{a_N} \geq \frac{(x - \eps)(1 - \delta)}{1 - x + \eps} \bigg)
\end{equation}
and
\begin{equation}\label{infX}
\liminf_{N \rightarrow \infty} \frac{N}{c_N} \P \bigg( \frac{X_{1,N}}{a_N} \geq \frac{(x - \eps)(1 - \delta)}{1 - x + \eps} \bigg) \geq \liminf_{N \rightarrow \infty} \frac{N}{c_N} \P(\nu_{1,N} > Nx).
\end{equation}

Recall that we need to show that condition 3 of Theorem~\ref{Cannings} is equivalent to (\ref{X1cond}).  First, suppose condition 3 of Theorem~\ref{Cannings} holds.  Choose $x \in (0, 1)$ such that $\Lambda(\{x\}) = 0$.  Because $\delta$ and $\eps$ can be arbitrarily small, equations (\ref{supX}) and (\ref{infX}) imply that for all $\theta > 0$ for which $\Lambda(\{x - \theta\}) = \Lambda(\{x + \theta\}) = 0$, we have
\begin{align*}
\int_{x + \theta}^1 y^{-2} \: \Lambda({\rm d}y) &= \liminf_{N \rightarrow \infty} \frac{N}{c_N}\P\big(\nu_{1,N} > N(x + \theta)\big) \leq \liminf_{N \rightarrow \infty} \frac{N}{c_N} \P \bigg( \frac{X_{1,N}}{a_N} \geq \frac{x}{1-x} \bigg) \\
&\leq \limsup_{N \rightarrow \infty} \frac{N}{c_N} \P \bigg( \frac{X_{1,N}}{a_N} \geq \frac{x}{1-x} \bigg) \leq \limsup_{N \rightarrow \infty} \P\big(\nu_{1,N} > N(x - \theta)\big) \\
&= \int_{x - \theta}^{1} y^{-2} \: \Lambda({\rm d}y).
\end{align*}
Because $\Lambda(\{x\}) = 0$, equation (\ref{X1cond}) follows.  Conversely, suppose (\ref{X1cond}) holds for all $x$ such that $\Lambda(\{x\}) = 0$.  Reasoning as above, equations (\ref{infnu}) and (\ref{supnu}) imply that for all $\theta > 0$ for which $\Lambda(\{x - \theta\}) = \Lambda(\{x + \theta\}) = 0$, we have
\begin{align*}
\int_{x+\theta}^1 y^{-2} \: \Lambda({\rm d}y) &= \liminf_{N \rightarrow \infty} \frac{N}{c_N} \P \bigg( \frac{X_{1,N}}{a_N} > \frac{x + \theta}{1 - (x + \theta)} \bigg) \leq \liminf_{N \rightarrow \infty} \frac{N}{c_N} \P(\nu_{1,N} > Nx) \\
&\leq \limsup_{N \rightarrow \infty} \frac{N}{c_N} \P(\nu_{1,N} > Nx) \leq \limsup_{N \rightarrow \infty} \frac{N}{c_N} \P \bigg( \frac{X_{1,N}}{a_N} \geq \frac{x - \theta}{1 - (x - \theta)} \bigg) \\
&= \int_{x - \theta}^1 y^{-1} \: \Lambda({\rm d}y).
\end{align*}
Therefore, condition 3 of Theorem~\ref{Cannings} holds, and the proof is complete.
\end{proof}

\section{Results for two-point distributions}\label{2ptsec}

We would like to prove the results for the simple model with a two-point distribution for the activation times introduced in Section \ref{ssec:simple}. To this end, we begin with an observation for the asymptotic behaviour of the moments of the geometric distributions that will govern the numbers of offspring of the early bird in each of the three regimes. The more general results from Section~\ref{sec:cannings} will be very useful.

Let $G_{1,N}$ be a geometric random variable with parameter $e^{-\lambda_N T_N}$. The following lemma provides, under condition \eqref{cond}, the asymptotic behavior of the $n$th moments 
$$M_N(n)\coloneqq  E\left[\left(\frac{G_{1,N}}{G_{1,N}+N-1}\right)^n\right].$$
\begin{lemma}\label{moments}
Let $n\in\Nb$. Assume that \eqref{cond} holds with $\kappa,\beta>0$. 
\begin{enumerate}
 \item If $\beta>1$, $\lim_{N\to\infty}M_N(n)=1$.
 \item If $\beta=1$, $\lim_{N\to\infty}M_N(n)= E[Y_\kappa^n]$, where $Y_\kappa$ is a random variable on $[0,1]$ with distribution
$$\P(Y_\kappa>x)=e^{- \frac{x}{\kappa(1-x)}},\qquad x\in[0,1].$$
 \item If $\beta<1$, $M_N(n)\sim n! \kappa^{n\beta}N^{-n(1-\beta)}$ as $N\to\infty$.
\end{enumerate}
\end{lemma}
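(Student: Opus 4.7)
The plan is to exploit the fact that under (\ref{cond}), we have $p_N := e^{-\lambda_N T_N} = (\kappa N)^{-\beta}$, so in all three regimes $p_N \to 0$ and $G_{1,N}$ is geometric with a vanishing parameter. The classical fact that $p_N G_{1,N}$ converges in distribution to an $\mathrm{Exp}(1)$ variable $W$ will drive each case; the three cases differ only in how $N$ compares with $E[G_{1,N}] = 1/p_N = (\kappa N)^\beta$.

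For cases (i) and (ii) the argument is soft: the random variables $G_{1,N}/(G_{1,N}+N-1)$ take values in $[0,1]$, so convergence in distribution automatically yields convergence of all moments. When $\beta > 1$, $N p_N = \kappa^{-\beta} N^{1-\beta} \to 0$, hence $N/G_{1,N} = N p_N/(p_N G_{1,N}) \to 0$ in probability (since $W > 0$ almost surely), giving $G_{1,N}/(G_{1,N}+N-1) \to 1$ in probability and $M_N(n) \to 1$. When $\beta = 1$, $p_N = 1/(\kappa N)$ so $G_{1,N}/N = \kappa \cdot p_N G_{1,N} \Rightarrow \kappa W$ in distribution, and the continuous mapping theorem yields
\begin{equation*}
\frac{G_{1,N}}{G_{1,N}+N-1} \Rightarrow \frac{\kappa W}{\kappa W + 1} \stackrel{d}{=} Y_\kappa,
\end{equation*}
where the distributional identity follows from (\ref{eq:Y_is_W}); one concludes $M_N(n) \to E[Y_\kappa^n]$.

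For case (iii) we need an actual rate of decay, not just distributional convergence, and the $[0,1]$-boundedness argument is too crude. Setting $X_N := G_{1,N}/(N-1)$, I would expand
\begin{equation*}
\left(\frac{X_N}{1+X_N}\right)^n = X_N^n - X_N^n \left(1 - \frac{1}{(1+X_N)^n}\right),
\end{equation*}
and bound the error term by $n X_N^{n+1}$, using the elementary inequality $1 - (1+x)^{-n} \leq nx$ valid for $x \geq 0$. The main term is handled by the exact moment asymptotic $E[G_{1,N}^n] \sim n!/p_N^n$; this follows from the factorial moment identity $E[G_{1,N}(G_{1,N}-1)\cdots(G_{1,N}-k+1)] = k!(1-p_N)^{k-1}/p_N^k$ for a geometric on $\{1,2,\ldots\}$, together with the expansion of $G_{1,N}^n$ as that falling factorial plus lower-order polynomial terms whose expectations are of order $1/p_N^{n-1} = o(1/p_N^n)$. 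This yields $E[X_N^n] \sim n! \kappa^{n\beta} N^{-n(1-\beta)}$, while $E[X_N^{n+1}]$ is smaller by a further factor of order $N^{-(1-\beta)}$ and is therefore absorbed into the error.

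The main technical ingredient is the factorial-moment computation underlying case (iii); once it is in place, the rest of the proof reduces to bounded convergence and the continuous mapping theorem and should go through with little additional work.
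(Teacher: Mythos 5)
Your argument is correct, but it reaches the lemma by a different route than the paper. The paper works throughout with the layer-cake representation $M_N(n)=\int_0^1\P\bigl(G_{1,N}>(N-1)x^{1/n}/(1-x^{1/n})\bigr)\,{\rm d}x$ (its equation \eqref{i3}): parts 1 and 2 follow from the pointwise limit of the geometric tail $(1-(\kappa N)^{-\beta})^{\lfloor (N-1)y\rfloor}$ together with dominated convergence, and part 3 is obtained by a change of variables turning the integral into a Laplace-type integral $\int_0^\infty \frac{ny^{n-1}}{(1+y)^{n+1}}e^{-\alpha_N y}\,{\rm d}y$ whose asymptotics give $\alpha_N^{-n}n!$. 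You instead treat parts 1 and 2 softly, via $p_NG_{1,N}\Rightarrow W\sim\mathrm{Exp}(1)$, the continuous mapping theorem, the identity \eqref{eq:Y_is_W}, and the fact that weak convergence of $[0,1]$-valued random variables yields convergence of all moments --- this is essentially the same underlying tail computation, repackaged probabilistically. Your part 3 is genuinely different: the sandwich $X_N^n-nX_N^{n+1}\le (X_N/(1+X_N))^n\le X_N^n$ with $X_N=G_{1,N}/(N-1)$, combined with the exact factorial moments $E[(G_{1,N})_k]=k!(1-p_N)^{k-1}/p_N^k$ and the Stirling expansion of $G_{1,N}^n$, gives $E[X_N^n]\sim n!\kappa^{n\beta}N^{-n(1-\beta)}$ with the correction term smaller by a factor $N^{-(1-\beta)}$, exactly as needed. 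Your route buys a more elementary, purely algebraic treatment of the $\beta<1$ regime (no integral asymptotics), while the paper's single tail-integral representation handles all three regimes uniformly; both are complete proofs.
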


\begin{proof}
Note first that
 \begin{equation}\label{i3}
 M_N(n)=\int_{0}^1\P\Bigg( \frac{G_{1,N}}{G_{1,N}+N-1}> x^{1/n}\Bigg){\rm d} x=\int_{0}^1\P\Bigg( G_{1,N}>\frac{(N-1)x^{1/n}}{1-x^{1/n}} \Bigg){\rm d} x.
\end{equation}
Assume now that $\beta>1$ and let us show part 1 of the lemma. Note that, for any $y\geq 0$
$$\P\left( G_{1,N}> (N-1)y \right)=\left (1-\frac{1}{(\kappa N)^{\beta}}\right)^{\lfloor (N-1)y\rfloor}\xrightarrow[N\to\infty]{}1.$$
Thus, part 1 of the lemma follows from the dominated convergence theorem. 

Assume now that $\beta=1$ and let us prove part 2. Note that, for any $y\geq 0$
$$\P\left( G_{1,N}> (N-1)y \right)=\left (1-\frac{1}{\kappa N}\right)^{\lfloor (N-1)y\rfloor}\xrightarrow[N\to\infty]{}e^{-\frac{y}{\kappa}}.$$
Thus, part 2 follows using dominated convergence theorem and making the substitution $y=x^{1/n}$. 

In the remainder of the proof we assume that $\beta<1$. Making the change of variable $y=x^{1/n}/(1-x^{1/n})$ in \eqref{i3} and using standard properties of the floor function, we obtain
$$ M_N(n)\sim \int_{0}^{\infty}\frac{ny^{n-1}}{(1+y)^{n+1}}\left(1-\frac{1}{(\kappa N)^\beta}\right)^{(N-1)y}{\rm d} y= \int_{0}^{\infty}\frac{ny^{n-1}}{(1+y)^{n+1}}e^{-\alpha_N y}{\rm d} y,$$
as $N\to \infty$, where $\alpha_N=-(N-1)\log(1-(\kappa N)^{-\beta})$. Making the substitution $z=\alpha_N y$, we obtain
$$M_N(n)\sim\alpha_N^{-n}\int_0^\infty \frac{nz^{n-1}}{\left(1+\frac{z}{\alpha_N}\right)^{n+1}}e^{-z} {\rm d} z\sim\alpha_N^{-n}\int_0^\infty {nz^{n-1}}e^{-z} {\rm d} z=\alpha_N^{-n}n!,$$
and part 3 follows since $\alpha_N\sim \kappa^{-\beta} N^{1-\beta}$ as $N\to \infty$.
\end{proof}

\begin{lemma}\label{c1-J}
Under assumption \eqref{cond0}, we have
$$\lim_{N\to\infty} c_N=0.$$
Moreover, if \eqref{cond} holds for
\begin{enumerate}
 \item  $\beta>1$, then $c_N\sim N\om_N$ as $N\to\infty$.
 \item  $\beta=1$, then $c_N\sim N\om_N \,E[Y_\kappa^2]$ as $N\to\infty$.
 \item  $\beta<1$, then $c_N\sim 2\kappa ^{2\beta} \om_N N^{2\beta-1}$ as $N\to\infty$.
\end{enumerate}
\end{lemma}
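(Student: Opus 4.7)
The plan is to start from the representation $c_N = N\, E[X_{1,N}(X_{1,N}-1)/(S_N(S_N-1))]$ given by \eqref{cNX} and exploit the two-point structure of the model. Since $X_{1,N} = 1$ on $\{b_{1,N}=0\}$, that event contributes nothing, while on $\{b_{1,N}=1\}$ we have $X_{1,N} = G_{1,N}$ and $S_N = G_{1,N} + A_N$ with $A_N \coloneqq \sum_{i=2}^N X_{i,N} \geq N-1$. Independence of $b_{1,N}$ from $(G_{1,N}, A_N)$ then gives
$$c_N = N\omega_N\, E\!\left[\frac{G_{1,N}(G_{1,N}-1)}{(G_{1,N}+A_N)(G_{1,N}+A_N-1)}\right].$$

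For the first assertion, the pointwise bound $A_N \geq N-1$ together with the elementary inequality $G(G-1)/((G+N-1)(G+N-2)) \leq (G/(G+N-1))^2 \leq 1$ immediately yields $c_N \leq N\omega_N M_N(2) \leq N\omega_N$, which tends to $0$ under \eqref{cond0}. For the sharp asymptotics under \eqref{cond}, I would sandwich $c_N$ between this same upper bound and the matching lower bound obtained by restricting to $\{J' = 0\}$, where $J' \coloneqq \sum_{i=2}^N b_{i,N}$; on that event $A_N = N-1$ and $S_N = G_{1,N} + N - 1$ deterministically, and $P(J'=0) = (1-\omega_N)^{N-1} \to 1$ by \eqref{cond0}. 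It therefore suffices to establish that
$$I_N \coloneqq E\!\left[\frac{G_{1,N}(G_{1,N}-1)}{(G_{1,N}+N-1)(G_{1,N}+N-2)}\right] \sim M_N(2).$$

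For this I would use the sandwich
$$\left(\frac{G_{1,N}-1}{G_{1,N}+N-1}\right)^2 \leq \frac{G_{1,N}(G_{1,N}-1)}{(G_{1,N}+N-1)(G_{1,N}+N-2)} \leq \left(\frac{G_{1,N}}{G_{1,N}+N-1}\right)^2$$
together with the algebraic identity $(G/(G+N-1))^2 - ((G-1)/(G+N-1))^2 = (2G-1)/(G+N-1)^2$, which reduces the matter to showing that the correction term $E[G_{1,N}/(G_{1,N}+N-1)^2]$ is of strictly smaller order than $M_N(2)$ in each regime. This is the one technical step. In the regimes $\beta \geq 1$, the trivial bound $E[G_{1,N}/(G_{1,N}+N-1)^2] \leq E[1/(G_{1,N}+N-1)] \leq 1/N \to 0$ suffices, since Lemma \ref{moments}(1)--(2) gives $M_N(2) \to 1$ or $E[Y_\kappa^2] > 0$. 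In the subcritical regime $\beta < 1$, I would instead use $E[G_{1,N}/(G_{1,N}+N-1)^2] \leq E[G_{1,N}]/N^2 = \kappa^\beta N^{\beta-2}$, which is of smaller order than $M_N(2) \sim 2\kappa^{2\beta} N^{2\beta-2}$ because $N^{\beta-2}/N^{2\beta-2} = N^{-\beta} \to 0$.

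Combining the upper and lower bounds yields $c_N \sim N\omega_N M_N(2)$, and the three stated asymptotics follow by inserting the leading order of $M_N(2)$ from the three cases of Lemma \ref{moments}. The main obstacle is the regime-3 estimate of the correction term, but as indicated above, the simple-minded bound $E[G_{1,N}]/N^2$ already defeats $M_N(2) \sim 2\kappa^{2\beta} N^{2\beta-2}$ by a factor $N^{-\beta}$, so no more refined Laplace-type estimate is needed.
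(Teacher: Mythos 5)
Your proposal is correct and follows essentially the same route as the paper: reduce $c_N$ to $N\omega_N$ times an expectation involving only $G_{1,N}$ and $N-1$ (the paper's split on whether any other individual wakes early, with $I_N^+\le I_N^0$, is exactly your monotonicity-in-$A_N$ sandwich combined with $P(J'=0)\to 1$), then show this expectation is asymptotic to $M_N(2)$ by bounding a correction term of order $E[G_{1,N}/(G_{1,N}+N-1)^2]$, and finally invoke Lemma \ref{moments}. The only differences are cosmetic: you use a two-sided sandwich where the paper uses the exact identity \eqref{i2b}, and in the regime $\beta<1$ you bound the correction by $E[G_{1,N}]/N^2=\kappa^\beta N^{\beta-2}$ instead of $M_N(1)/N$, which gives the same order.
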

\begin{proof}
We begin with a few general observations. Let $\bar{S}_{N-1} = S_N - X_{1,N}=\sum_{i=2}^N X_{i,N}$, and note that, thanks to Lemma \ref{dict}, we have
\begin{equation}\label{i1}
c_N=NE\left[\frac{(X_{1,N})_2}{(S_N)_2}\right]=N\omega_N E\left[\frac{(G_{1,N})_2}{(G_{1,N}+\bar{S}_{N-1})_2}\right],
\end{equation}
Since the expectation in the previous expression is smaller than one, the first statement follows from the assumption that $N\omega_N\to 0$ as $N\to \infty$.

Let us now have a closer look at the expectation in \eqref{i1}. Splitting on the event that there exists $b_{i,N}=1$ with $i>1$ and its complement, and defining
$$I_N^+\coloneqq E\left[\frac{(G_{1,N})_2}{(G_{1,N}+\bar{S}_{N-1})_2}\;\bigg|\; \sum_{i=2}^N b_{i,N} >0\right]\leq 1\quad\textrm{and}\quad I_N^0 \coloneqq E\left[\frac{(G_{1,N})_2}{(G_{1,N}+{N-1})_2}\right],$$
we obtain
\begin{equation}\label{i2}
 E\left[\frac{(G_{1,N})_2}{(G_{1,N}+\bar{S}_{N-1})_2}\right]=(1-(1-\omega_N)^{N-1})I_N^+ + (1-\omega_N)^{N-1} I_N^0,
\end{equation}
Clearly, $0\leq I_N^+\leq I_N^0$, and because $N\omega_N\to 0$ it follows that $I_N^0$ is the leading term in \eqref{i2} and 
\begin{equation}\label{eq:cN_only_1_matters}
 c_N= N\omega_N E\left[\frac{(G_{1,N})_2}{(G_{1,N}+\bar{S}_{N-1})_2}\right]\sim N\omega_N I^0_N, \quad \text{ as } N\rightarrow \infty.
\end{equation}
In addition, we have
\begin{equation}\label{i2b}
 I_N^0=M_N(2)-E\left[\frac{(N-1)G_{1,N}}{(G_{1,N}+N-1)(G_{1,N}+N-1)_2}\right],
\end{equation}
and the second term is smaller than $1/N$. Note that we have not used any assumptions on the distribution of $G_{1,N}$ up to this point. 

Parts 1 and 2 of the lemma now follow directly using Lemma \ref{moments}. In the remainder of the proof we assume that \eqref{cond} holds for $\beta<1$. In order to prove part 3, we use Lemma \ref{moments} to see that
$$M_N(2)\sim\frac{2\kappa^{2\beta}}{N^{2(1-\beta)}},$$
as $N\to \infty$. To complete the proof, we need to show that the second term in \eqref{i2b} converges faster to $0$. Note that, using Lemma \ref{moments} with $n=1$ in the last step, we get
$$E\left[\frac{G_{1,N}}{(G_{1,N}+N-1)^2}\right]\leq \frac{1}{N}M_N(1)\sim \frac{\kappa^{\beta}}{N^{2-\beta}},$$
as $N\to\infty$.  This completes the proof. 
\end{proof}

\begin{lemma}\label{c-King}
Assume that condition \eqref{cond0} holds. If in addition $\eqref{cond}$ holds for $\beta<1$,
then $$\lim_{N\to\infty}\frac{E[(\nu_{1,N})_3]}{N^2 c_N}=0.$$
In particular, in this case, the processes $(\Psi_{n,N}(t/c_N))_{t\geq 0}$ converge to Kingman's coalescent.
\end{lemma}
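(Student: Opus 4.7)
The plan is to first estimate $E[(\nu_{1,N})_3]$ by reducing it to a moment of the geometric distribution through Lemma \ref{dict}, exactly as was done for $c_N$ in Lemma \ref{c1-J}. Applying \eqref{multimoments} with $r=1$ and $k_1=3$, together with the observations that $(X_{1,N})_3 = 0$ on $\{b_{1,N}=0\}$ (since then $X_{1,N}=1$) and that $\bar{S}_{N-1} \geq N-1$ because each $X_{i,N}\geq 1$, yields
$$\frac{E[(\nu_{1,N})_3]}{(N)_3} = E\!\left[\frac{(X_{1,N})_3}{(S_N)_3}\right] \leq \omega_N\, E\!\left[\frac{(G_{1,N})_3}{(G_{1,N}+N-1)_3}\right].$$
Bounding the falling factorials via $(G_{1,N})_3 \leq G_{1,N}^3$ and $(G_{1,N}+N-1)_3 \geq (1-2/(N-1))^2\,(G_{1,N}+N-1)^3$, the right-hand side of the above display is at most $(1+o(1))\,\omega_N\, M_N(3)$.

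Plugging in the asymptotics from Lemma \ref{moments}(3) with $n=3$, which gives $M_N(3) \sim 3!\,\kappa^{3\beta} N^{-3(1-\beta)}$, and from Lemma \ref{c1-J}(3), which gives $c_N \sim 2\kappa^{2\beta}\omega_N N^{2\beta-1}$, one obtains
$$\frac{E[(\nu_{1,N})_3]}{N^2 c_N} \lesssim \frac{N^3 \cdot \omega_N \cdot N^{-3(1-\beta)}}{N^2 \cdot \omega_N\, N^{2\beta-1}} = N^{\beta-1} \xrightarrow[N\to\infty]{} 0,$$
since $\beta<1$. This proves the displayed limit in the statement.

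For the ``in particular'' assertion, I would verify the three conditions of Theorem \ref{Cannings} with $\Lambda = \delta_0$. Condition 1 is the content of Lemma \ref{c1-J}, and since $c_N \to 0$, condition 2 follows automatically from Lemma \ref{nu12tail}. For condition 3, note that $\int_x^1 y^{-2}\,\delta_0({\rm d}y)=0$ for every $x\in(0,1)$, and Markov's inequality applied to $(\nu_{1,N})_3$, valid once $Nx\geq 3$, gives
$$\frac{N}{c_N}\, \P(\nu_{1,N} > Nx) \leq \frac{N\,E[(\nu_{1,N})_3]}{c_N\,(Nx)(Nx-1)(Nx-2)} \lesssim \frac{E[(\nu_{1,N})_3]}{x^3 N^2 c_N},$$
which tends to zero by what was just proved.

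No step is genuinely hard: the argument essentially assembles estimates already derived in Lemmas \ref{moments}, \ref{c1-J}, \ref{nu12tail} and Theorem \ref{Cannings}. The only mildly delicate point is the comparison between $(G_{1,N}+N-1)_3$ and $(G_{1,N}+N-1)^3$, which is routine once one checks that the correction factor $(1-2/(N-1))^{-2}$ is $1+o(1)$.
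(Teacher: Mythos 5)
Your proof of the main estimate is correct and follows essentially the same route as the paper: reduce to pre-sampling family sizes via Lemma \ref{dict}, observe that $(X_{1,N})_3$ vanishes unless $b_{1,N}=1$, bound $\bar{S}_{N-1}\geq N-1$, compare the resulting falling-factorial ratio with $M_N(3)$, and then combine Lemma \ref{moments} (part 3) with the asymptotics for $c_N$ from Lemma \ref{c1-J} (part 3) to get a bound of order $N^{\beta-1}\to 0$. The only cosmetic difference there is that the paper uses the inequality $(a)_3/(b)_3\leq (a/b)^3$ for $a\leq b$, $b\geq 3$, whereas you insert a $(1-2/(N-1))^{-2}=1+o(1)$ correction factor; both are fine.

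The genuine divergence is in the ``in particular'' step. The paper simply invokes M\"ohle's criterion \cite[Thm.~4(b)]{mo00}, which states that $E[(\nu_{1,N})_3]/(N^2c_N)\to 0$ (together with $c_N\to 0$) already yields convergence to Kingman's coalescent. You instead verify the three conditions of Theorem \ref{Cannings} with $\Lambda=\delta_0$: condition 1 from Lemma \ref{c1-J}, condition 2 from Lemma \ref{nu12tail}, and condition 3 by the Markov-type bound $\P(\nu_{1,N}>Nx)\leq E[(\nu_{1,N})_3]/((Nx)(Nx-1)(Nx-2))$, which indeed gives $\frac{N}{c_N}\P(\nu_{1,N}>Nx)\to 0=\int_x^1 y^{-2}\,\delta_0({\rm d}y)$. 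This is a valid, self-contained alternative that stays entirely within the toolkit already set up in Section \ref{sec:cannings}; what the paper's citation buys is brevity, while your route makes explicit why the third-moment condition forces the $\delta_0$ limit.
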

\begin{proof}
The second statement follows from the first one using \cite[Thm. 4(b)]{mo00}.
 Let us prove the first statement. Thanks to Lemma \ref{dict}, and using that $(X_{1,N})_3=0$ if $b_{1,N}=0$, we obtain
 $$\frac{E[(\nu_{1,N})_3]}{N^2 c_N}=\frac{(N-1)(N-2)}{N c_N}E\left[\frac{(X_{1,N})_3}{(S_N)_3}\right]\leq\frac{N\omega_N}{c_N}E\left[\frac{(G_{1,N})_3}{(G_{1,N}+\bar{S}_{N-1})_3}\right],$$
 with $\bar{S}_{N-1}=\sum_{k=2}^N X_{i,N}$.
Moreover, from part 3 in Lemma \ref{c1-J}, we see that it suffices to show that
$$\lim_{N\to\infty}N^{2(1-\beta)}E\left[\frac{(G_{1,N})_3}{(G_{1,N}+
\bar{S}_{N-1})_3}\right]=0.$$
Now, using that $\bar{S}_{N-1}\geq N-1$ and that, for $a\leq b$ and $b\geq 3$, we have $(a)_3/(b)_3\leq (a/b)^3$, we obtain
\begin{align*}
 E\left[\frac{(G_{1,N})_3}{(G_{1,N}+\bar{S}_{N-1})_3}\right]\leq E\left[\left(\frac{G_{1,N}}{G_{1,N}+{N-1}}\right)^3\right]=M_N(3),
\end{align*}
for $N\geq 3$, and the result follows using Lemma \ref{moments} with $n=3$.
 \end{proof}
 
\begin{proof}[Proof of Theorem \ref{simplethm}]
 The case $\beta <1$ is already covered by Lemma \ref{c-King}.
 
 We want to apply Theorem \ref{Cannings} for the case of $\beta \geq 1$. Conditions 1 and 2 hold by Lemma~\ref{c1-J} and Lemma~\ref{nu12tail} respectively. Hence we are left to check condition 3.
 
 Using Lemma \ref{LLNlem}, we obtain that for any $x \in (0,1)$ and any $\varepsilon \in (0, x\wedge(1-x))$, for sufficiently large $N$
 \begin{align}\label{eq:nu_vs_X}
\frac{N}{c_N}\P\left(\frac{X_{1,N}}{S_N} > x+\varepsilon\right) \leq  \frac{N}{c_N}\P\left(\nu_{1,N} > Nx\right) \leq  \frac{N}{c_N}\P\left(\frac{X_{1,N}}{S_N} > x-\varepsilon\right).
 \end{align}
 Let $A$ be the event that the individual with label 1 woke up early, i.e.\ $A := \{ \tau_{1,N} = 0\}=\{ b_{1,N} = 1\}$, and let $B$ be the event that at least two individuals woke up early, i.e.\ $B:=\{\sum_{i=1}^N b_{i,N} =2\}$. 
 For any $y \in (0,1)$ we can split the probability we are interested in into
 \begin{align*}
 \P\left(\frac{X_{1,N}}{S_N} > y\right)	& = \P\left(\frac{X_{1,N}}{S_N} > y\mid A^c\right)\P(A^c)  +\P\left(\frac{X_{1,N}}{S_N} > y\mid A\cap B^c\right)\P\left( A\cap B^c\right) \\								       	       & \qquad \qquad \qquad \qquad +\P\left(\frac{X_{1,N}}{S_N} > y\mid A\cap B\right)\P\left( A\cap B\right).%\\
 \end{align*}
 Note that $\P\left(X_{1,N}/S_N > y\mid A^c\right) \leq \P(1/N>y)=0$ for  any given $y$ and $N$ sufficiently large. In addition we can bound $\P\left( A\cap B\right)\leq N\omega_N^2$. By Lemma \ref{c1-J}, we have $c_N \sim N \omega_N C(\beta)$, where $C(\beta)=1$ if $\beta >1$ and $C(1) = \E[Y_{\kappa}^2]$ if $\beta=1$.  Hence,
 \begin{align*}
  \limsup_{N\rightarrow \infty} \frac{N}{c_N}\P\left(\frac{X_{1,N}}{S_N} > y\mid A\cap B\right)\P\left( A\cap B\right)=0.
 \end{align*}
Observe that we can rewrite
\begin{align*}
\P\left(\frac{X_{1,N}}{S_N} > y\mid A\cap B^c\right) 	& = \P\left(\frac{G_{1,N}}{G_{1,N} + N-1} > y\right) = \P\left(G_{1,N}>\frac{y}{1-y}(N-1)\right) \\
								& = \left(1-\frac{1}{(\kappa N)^{\beta}}\right)^{\left\lfloor\frac{y}{1-y}(N-1)\right\rfloor},
\end{align*}
 since $G_{1,N}$ has a geometric distribution with parameter $(\kappa \beta)^{-1}$.
 
 If $\beta > 1$, combining the observations above, we obtain
 \begin{align*}
  \lim_{N\rightarrow \infty} \frac{N}{c_N}\P\left(\frac{X_{1,N}}{S_N} > y\right) & = \lim_{N\rightarrow \infty}\frac{N\omega_N(1-\omega_N)^{N-1}}{N\omega_N}\left(1-\frac{1}{(\kappa N)^{\beta}}\right)^{\left\lfloor\frac{y}{1-y}(N-1)\right\rfloor}  = 1 
 \end{align*}
 for any $y \in (0,1)$. Setting $\Lambda_{ss} := \delta_1$, the Dirac-measure on 1, we conclude from \eqref{eq:nu_vs_X} that 
 \begin{align*}
   \lim_{N\rightarrow \infty} \frac{N}{c_N}\P\left(\nu_{1,N} > Nx\right) & = 1 = \int_x^1z^2\Lambda_{ss}({\rm d}z),
										 \end{align*}
for any $x \in (0,1)$. Theorem \ref{Cannings} then implies that the genealogy converges to the star-shaped coalescent.
 
Analogously, if $\beta=1$, then defining $Y_{\kappa}$ as in part 2 of Theorem \ref{simplethm},
$$\lim_{N\rightarrow \infty} \frac{N}{c_N}\P\left(\frac{X_{1,N}}{S_N} > y \right) = \lim_{N\rightarrow \infty}\frac{N\omega_N(1-\omega_N)^{N-1} }{c_N}\left(1-\frac{1}{\kappa N}\right)^{\left\lfloor\frac{y}{1-y}(N-1)\right\rfloor}  = \frac{1}{\E[Y_\kappa^2]} e^{-\frac{1}{\kappa}\frac{y}{1-y}}.$$
Since \eqref{eq:nu_vs_X} holds for all $\varepsilon >0$ and $\sup_{\varepsilon >0} \frac{x-\varepsilon}{1-(x-\varepsilon)} = \inf_{\varepsilon >0} \frac{x+\varepsilon}{1-(x+\varepsilon)} =  \frac{x}{1-x}$ this implies that 
\begin{align*}
\lim_{N\rightarrow \infty} \frac{N}{c_N}\P\left(\nu_{1,N} > Nx\right) & = \frac{1}{\E[Y_\kappa^2]} e^{-\frac{1}{\kappa}\frac{x}{1-x}} =  \frac{1}{\E[Y_\kappa^2]}\P(Y_\kappa>x)
\end{align*}
Hence, setting $\Lambda_\kappa({\rm d}y) := \frac{y^2}{\E[Y_\kappa^2]}\P(Y_\kappa\in{\rm d}y)$, we obtain
\begin{align*}
\lim_{N\rightarrow \infty} \frac{N}{c_N}\P\left(\nu_{1,N} > Nx\right) & = \int_x^1y^{-2} \Lambda_{\kappa}({\rm d}y)
\end{align*}
for all $x \in (0,1)$, which ends the proof.
\end{proof}

\section{Exponentially increasing rates of exit from dormancy}\label{expsec}
 
This section is devoted to the proof of Theorem \ref{expthm}, which characterizes the asymptotic genealogies in the model described in Section \ref{expsub}.

\subsection{A comparison between the genealogies of two models}

The main ingredient in the proof of Theorem \ref{expthm} is a result that allows us to compare the genealogies of two populations constructed from the same sequence $(\zeta_i)_{i=1}^N$ of i.i.d. positive random variables. The first model is the one described in Section \ref{modelsec}, with no summer (i.e. $t_N=T_N$), with $\lambda_N=\lambda>0$, and where $T_N-\tau_{i,N}= \zeta_i\wedge T_N$ (the model in Section \ref{expsub} is the special case where $\zeta_1$ is exponentially distributed with parameter $\gamma$). In the second model, the family sizes $(\tilde{X}_1,\ldots,\tilde{X}_N)$ at the end of the year are i.i.d. and such that, conditionally on $\zeta_i$, $\tilde{X}_i$ is geometrically distributed with parameter $e^{-\lambda \zeta_i}$ (i.e. days start at time $-\infty$). The vector of family sizes $(\tilde{\nu}_{1,N},\ldots,\tilde{\nu}_{N,N})$ is obtained by sampling $N$ individuals without replacement among the $\tilde{S}_N:=\tilde{X}_1+\cdots +\tilde{X}_N$ present at the end of the year.

The next result will be useful for comparing the genealogies of the previously described models. 
\begin{lemma}\label{coupling}
Let $f:\Rb_+^N\to \Rb_+$ be a positive bounded function. Then,
$$\lvert E[f(X_{1,N},\ldots,X_{N,N})]- E[f(\tilde{X}_1,\ldots,\tilde{X}_N)]\rvert \leq 2 \lVert f\rVert_{\infty} (1-\P(\zeta_1\leq T_N)^N).$$
\end{lemma}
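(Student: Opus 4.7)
The plan is to construct an explicit coupling of the two vectors of family sizes on a common probability space, so that they coincide on a high-probability event, and then apply boundedness of $f$ on the complementary event.

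The key observation is that conditional on $\zeta_i$, the variable $X_{i,N}$ is geometric with parameter $e^{-\lambda(\zeta_i\wedge T_N)}$, while $\tilde X_i$ is geometric with parameter $e^{-\lambda\zeta_i}$. These two conditional distributions coincide precisely when $\zeta_i\le T_N$. So on the event
$$A \coloneqq \bigcap_{i=1}^N \{\zeta_i\le T_N\} = \{\max_{1\le i\le N}\zeta_i \le T_N\},$$
which has probability $\P(A)=\P(\zeta_1\le T_N)^N$, we can build $X_{i,N}$ and $\tilde X_i$ from the same source of randomness (e.g., one i.i.d.\ uniform random variable per coordinate used for the inverse-CDF construction of a geometric with parameter $e^{-\lambda(\zeta_i\wedge T_N)}=e^{-\lambda\zeta_i}$) and thereby arrange $X_{i,N}=\tilde X_i$ for every $i$. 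On $A^c$, we couple the remaining coordinates in any measurable way (say, independently), which does not affect the marginal laws.

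Once the coupling is in place, the bound is routine. Write
$$E[f(X_{1,N},\ldots,X_{N,N})] - E[f(\tilde X_1,\ldots,\tilde X_N)] = E\bigl[\bigl(f(X_{1,N},\ldots,X_{N,N}) - f(\tilde X_1,\ldots,\tilde X_N)\bigr)\1_{A^c}\bigr],$$
since the integrand vanishes on $A$. Taking absolute values and using the triangle inequality together with the bound $|f|\le \|f\|_\infty$ pointwise gives
$$\bigl\lvert E[f(X_{1,N},\ldots,X_{N,N})] - E[f(\tilde X_1,\ldots,\tilde X_N)]\bigr\rvert \le 2\|f\|_\infty \P(A^c) = 2\|f\|_\infty\bigl(1-\P(\zeta_1\le T_N)^N\bigr),$$
which is the claim.

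There is no real obstacle here; the only thing to be careful about is verifying that the coupling is legitimate, i.e.\ that the marginals of the constructed vectors indeed match the laws of $(X_{1,N},\ldots,X_{N,N})$ and $(\tilde X_1,\ldots,\tilde X_N)$. Both vectors have independent coordinates given $(\zeta_1,\ldots,\zeta_N)$, and the conditional laws of the coordinates match by the truncation identity $\zeta_i\wedge T_N = \zeta_i$ on $\{\zeta_i\le T_N\}$, so the coupling is valid coordinatewise and hence jointly by conditional independence.
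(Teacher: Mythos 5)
Your proposal is correct and follows essentially the same route as the paper: both rest on the observation that, built from the same $(\zeta_i)_{i=1}^N$ (and common underlying randomness), the two offspring vectors coincide on the event $\{\max_i \zeta_i \le T_N\}$, whose probability is $\P(\zeta_1\le T_N)^N$, after which boundedness of $f$ gives the factor $2\|f\|_\infty$ on the complement. The paper states this coupling in one line; you simply spell out the inverse-CDF construction that makes it explicit.
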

\begin{proof}
It follows directly from the fact that $(X_{1,N},\ldots,X_{N,N})$ and $(\tilde{X}_1,\ldots,\tilde{X}_N)$ are equal if $\zeta_i\leq T_N$ for all $i\in[N]$.
\end{proof}
Let us now set
$$ c_N:=\frac{ E[(\nu_{1,N})_2]}{N-1}\quad\textrm{and}\quad \tilde{c}_N:=\frac{ E[(\tilde{\nu}_{1,N})_2]}{N-1}.$$
The next result provides sufficient conditions for the limiting genealogies of the two models to coincide.
\begin{prop}\label{equiva}
 Assume that 
 \begin{equation}\label{condTN}
  \lim_{N\to\infty}N^3\P(\zeta_1> T_N)=0.
 \end{equation}
Then $c_N\sim \tilde{c}_N$ as $N\to\infty$. Moreover, for $r\geq 1$, the condition
 \begin{equation}\label{condTNb}
  \lim_{N\to\infty}N^{r+2}\P(\zeta_1> T_N)=0
 \end{equation}
implies that, for all $k_1,\ldots,k_r\geq 2$,
$$\lim_{N\to\infty}\frac{ E[(\nu_{1,N})_{k_1}\cdots(\nu_{r,N})_{k_r}]}{N^{k_1+\cdots+k_r-r}c_N}= \lim_{N\to\infty}\frac{ E[(\tilde{\nu}_{1,N})_{k_1}\cdots(\tilde{\nu}_{r,N})_{k_r}]}{N^{k_1+\cdots+k_r-r}\tilde{c}_N},$$
in the sense that if either limit exists, then so does the other, and the limits are equal.
\end{prop}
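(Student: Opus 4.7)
The strategy is to combine Lemma~\ref{coupling} with the representation \eqref{multimoments} from Lemma~\ref{dict}. The key observation is that the integrand
\[
f_K(x_1,\ldots,x_N) \;:=\; \frac{(x_1)_{k_1}\cdots(x_r)_{k_r}}{(x_1+\cdots+x_N)_{K}}, \qquad K := k_1+\cdots+k_r,
\]
appearing on the right of \eqref{multimoments} is bounded by $1$ (being a sampling probability), and that both $(X_{1,N},\ldots,X_{N,N})$ and $(\tilde X_1,\ldots,\tilde X_N)$ take values in $\{1,2,\ldots\}^N$, so $f_K$ is well-defined under both laws once $N>K$. Applying Lemma~\ref{coupling} with $\|f_K\|_\infty \leq 1$, multiplying by $(N)_K$, and using $1-(1-p)^N \leq Np$ with $p=P(\zeta_1>T_N)$, one obtains the master estimate
\begin{equation}\label{eq:pl_master}
\bigl| E[(\nu_{1,N})_{k_1}\cdots(\nu_{r,N})_{k_r}] - E[(\tilde\nu_{1,N})_{k_1}\cdots(\tilde\nu_{r,N})_{k_r}] \bigr| \;\leq\; 2 N^{K+1} P(\zeta_1 > T_N),
\end{equation}
valid for every $r\geq 1$ and $k_1,\ldots,k_r\geq 2$.

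Next I need an a priori lower bound on $\tilde c_N$. The second model is of the Cannings form studied in Section~\ref{sec:cannings}, and the distribution of $\tilde X_1$ does not depend on $N$, so $q := P(\tilde X_1\geq 2)=E[1-e^{-\lambda\zeta_1}]$ is a fixed positive constant (under the implicit assumption that $\zeta_1>0$ with positive probability). Inequality \eqref{lowcN} then yields $\tilde c_N \geq q^2/(2N)$. With this in hand, specialising \eqref{eq:pl_master} to $r=1$, $k_1=2$ and dividing by $N-1$ gives $|c_N - \tilde c_N|\lesssim N^2 P(\zeta_1>T_N)$, so under \eqref{condTN},
\[
\frac{|c_N - \tilde c_N|}{\tilde c_N} \;\lesssim\; N^3\, P(\zeta_1 > T_N) \;\longrightarrow\; 0,
\]
which proves the first assertion $c_N\sim\tilde c_N$. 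As a byproduct one also has $c_N\gtrsim 1/N$.

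For the second assertion, fix $r\geq 1$ and $k_1,\ldots,k_r\geq 2$. Dividing \eqref{eq:pl_master} by $N^{K-r} c_N$ and using $c_N\gtrsim 1/N$,
\[
\left| \frac{E[(\nu_{1,N})_{k_1}\cdots(\nu_{r,N})_{k_r}]}{N^{K-r} c_N} - \frac{E[(\tilde\nu_{1,N})_{k_1}\cdots(\tilde\nu_{r,N})_{k_r}]}{N^{K-r} c_N} \right| \;\lesssim\; \frac{N^{r+1} P(\zeta_1 > T_N)}{c_N} \;\lesssim\; N^{r+2} P(\zeta_1 > T_N),
\]
which tends to $0$ under \eqref{condTNb}. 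Since $\tilde c_N/c_N \to 1$ by the first part, replacing $c_N$ with $\tilde c_N$ in the denominator of the second term alters it only by a multiplicative factor tending to $1$. Hence the limit of either of the two normalised moments exists if and only if the other does, and they coincide.

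\textbf{Expected main obstacle.} There is no deep difficulty; once \eqref{eq:pl_master} is identified as the right tool, the rest is careful bookkeeping. The one subtlety is that the general claim requires the a priori bound $c_N \gtrsim 1/N$, which itself relies on first proving $c_N \sim \tilde c_N$ and exploiting the crude inequality \eqref{lowcN} for $\tilde c_N$ (which is available precisely because the distribution of $\tilde X_1$ is free of $N$). This forces the two assertions of the proposition to be established in the order stated.
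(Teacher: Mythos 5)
Your proposal is correct and follows essentially the same route as the paper's proof: both rest on Lemma \ref{coupling} applied to the bounded sampling-probability function from \eqref{multimoments}, the crude lower bound \eqref{lowcN} to get $c_N\gtrsim 1/N$, and the equivalence $c_N\sim\tilde c_N$ to swap normalizations. Your explicit remark that \eqref{lowcN} is best applied to the untruncated model (where $\P(\tilde X_1\geq 2)$ is a fixed positive constant since the law of $\tilde X_1$ does not depend on $N$) makes precise a step the paper leaves implicit, but it is the same argument.
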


\begin{proof}
Using Lemma \ref{dict}  for the two models we obtain
 $$c_N=N E\left[\frac{(X_{1,N})_2}{(S_{N})_2}\right]\quad\textrm{and}\quad\tilde{c}_N=N E\left[\frac{(X_1)_2}{(\tilde{S}_{N})_2}\right].$$
 Thus, using Lemma \ref{coupling} with $f$ defined via $f(x_1,\ldots,x_N)=N(x_1)_2/(x_1+\cdots +x_N)_2$, we obtain
 $$|c_N-\tilde{c}_N|\leq 2 N (1-\P(\zeta_1<T_N)^N),$$
and the first result follows from \eqref{condTN} and (\ref{lowcN}).

For the second statement, let us assume that \eqref{condTNb} holds. Note first that Lemma \ref{dict} yields
\begin{equation}\label{fid}
 \frac{ E[(\tilde{\nu}_{1,N})_{k_1}\cdots(\tilde{\nu}_{r,N})_{k_r}]}{N^{k_1+\cdots+k_r-r}\tilde{c}_N}=\frac{(N)_{k_1+\ldots+k_r}}{N^{k_1+\ldots+k_r}}\frac{N^r}{\tilde{c}_N} E\left[\frac{(\tilde{X}_1)_{k_1}\cdots(\tilde{X}_r)_{k_r}}{(\tilde{S}_N)_{k_1+\cdots+k_r}}\right],
\end{equation}
and 
\begin{equation}\label{sid}
 \frac{ E[(\nu_{1,N})_{k_1}\cdots({\nu}_{r,N})_{k_r}]}{N^{k_1+\cdots+k_r-r}c_N}=\frac{(N)_{k_1+\ldots+k_r}}{N^{k_1+\ldots+k_r}}\frac{N^r}{c_N} E\left[\frac{(X_{1,N})_{k_1}\cdots(X_{r,N})_{k_r}}{(S_N)_{k_1+\cdots+k_r}}\right].
\end{equation}
Consider the function $f:\Nb^N\to\Rb_+$ defined via $$f(x_1,\ldots,x_N)=\frac{(x_1)_{k_1}\cdots(x_r)_{k_r}}{(x_1+\cdots +x_N)_{k_1+\cdots+k_r}}.$$
Note that $f(x_1,\ldots,x_N)$ gives the probability that, if we sample $k_1+\cdots+k_r$ balls from an urn containing, for each $i\in[N]$, $x_i$ balls with label $i$, the first $k_1$ of them have label $1$, the next $k_2$ balls have label $2$, and so on. In particular, $\lVert f\Vert_\infty\leq 1$. Thus, applying Lemma \ref{coupling} with $f$,  
we obtain, for $N$ sufficiently large,
\begin{equation}\label{difference}
 \frac{N^r}{\bar{c}_N}\left|  E\left[\frac{(\bar{X}_1)_{k_1}\cdots(\bar{X}_r)_{k_r}}{(\bar{S}_N)_{k_1+\cdots+k_r}}\right]-E\left[\frac{(X_{1,N})_{k_1}\cdots(X_{r,N})_{k_r}}{(S_N)_{k_1+\cdots+k_r}}\right]\right|\leq \frac{2 N^r}{\bar{c}_N}(1-\P(\zeta_1<T_N)^N).
\end{equation}
Therefore, using (\ref{lowcN}) and \eqref{condTNb}, we conclude that the left-hand side in \eqref{difference}
converges to zero as $N\to\infty$. Moreover, since \eqref{condTNb} implies \eqref{condTN}, we have $c_N\sim\tilde{c}_N$. Hence, the result follows from \eqref{fid} and \eqref{sid}.
\end{proof}

\subsection{Exponential model} 
In this section, we come back to the model described in Section \ref{expsec}, i.e. we assume that $\zeta_1$ satisfies \eqref{zetatail}, that its law has no mass at zero, and that $T_N$ satisfies \eqref{zetaTN}.

The next result provides the asymptotic behaviour of the tails of $\tilde{X}_1$.
\begin{lemma}\label{l-tail}
 Assume that \eqref{zetatail} holds, and set $a=\gamma/\lambda$. Then,
\begin{equation}\label{jc}
 \P(\tilde{X}_1>k)\sim c\,\Gamma(1+a) k^{-a},\quad \textrm{as $k\to\infty$}.
\end{equation}
\end{lemma}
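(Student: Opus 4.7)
The plan is to reduce $\P(\tilde{X}_1 > k)$ to a Beta integral, to which Stirling's formula applies directly. Since $\tilde{X}_1$ given $\zeta_1$ is geometric with parameter $e^{-\lambda \zeta_1}$, I have
$$\P(\tilde{X}_1 > k) = E\bigl[(1-e^{-\lambda \zeta_1})^k\bigr] = \int_0^\infty (1-e^{-\lambda y})^k \, dF_{\zeta_1}(y),$$
where $F_{\zeta_1}$ is the distribution function of $\zeta_1$. Integrating by parts -- the boundary terms vanish because $(1-e^{-\lambda y})^k \to 0$ as $y \to 0$ for $k \geq 1$ and $\P(\zeta_1 > y) \to 0$ as $y \to \infty$ -- gives
$$\P(\tilde{X}_1 > k) = k\lambda \int_0^\infty \P(\zeta_1 > y)(1-e^{-\lambda y})^{k-1} e^{-\lambda y} \, dy.$$

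Next, I would substitute $s = e^{-\lambda y}$ to obtain
$$\P(\tilde{X}_1 > k) = k \int_0^1 \P\bigl(\zeta_1 > -\lambda^{-1}\log s\bigr)(1-s)^{k-1} \, ds.$$
The tail hypothesis \eqref{zetatail} translates into $\P(\zeta_1 > -\lambda^{-1}\log s) \sim c\, s^a$ as $s \to 0^+$, where $a = \gamma/\lambda$. Comparing heuristically with the Beta integral
$$ck \int_0^1 s^a (1-s)^{k-1} \, ds = c\, k\, B(a+1,k) = \frac{c\, k\, \Gamma(a+1)\,\Gamma(k)}{\Gamma(a+k+1)},$$
and using Stirling's formula in the form $\Gamma(a+k+1)/\Gamma(k)\sim k^{a+1}$ as $k \to \infty$, one arrives at the claimed asymptotic $c\,\Gamma(a+1)k^{-a}$.

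To make the replacement of $\P(\zeta_1 > -\lambda^{-1}\log s)$ by $cs^a$ rigorous, I would fix a small $\epsilon>0$ and split $\int_0^1 = \int_0^\epsilon + \int_\epsilon^1$. On $[\epsilon,1]$, the factor $(1-s)^{k-1}\leq(1-\epsilon)^{k-1}$ decays exponentially in $k$, giving a contribution that is $o(k^{-a})$. On $[0,\epsilon]$, the tail asymptotic yields $\P(\zeta_1 > -\lambda^{-1}\log s) = (c + r_\epsilon(s))s^a$ with $\sup_{s \in (0,\epsilon]} |r_\epsilon(s)| \to 0$ as $\epsilon \to 0$; substituting and extending the integration back to $[0,1]$ (at the cost of another exponentially small error) gives $k\int_0^\epsilon \P(\zeta_1 > -\lambda^{-1}\log s)(1-s)^{k-1}\,ds = (c + o_\epsilon(1))\,\Gamma(a+1)k^{-a}(1+o(1))$, and letting $\epsilon \to 0$ after $k\to\infty$ completes the proof. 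The main (mild) obstacle is uniform control of the remainder near $s = 0$, which is exactly what the $\epsilon$-splitting delivers; the rest is standard Gamma asymptotics.
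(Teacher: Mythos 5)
Your proof is correct, but it takes a genuinely different route from the paper. You reduce $\P(\tilde{X}_1>k)=E[(1-e^{-\lambda\zeta_1})^k]$ by integration by parts and the substitution $s=e^{-\lambda y}$ to $k\int_0^1 \P(\zeta_1>-\lambda^{-1}\log s)(1-s)^{k-1}\,{\rm d}s$, then compare with the Beta integral $kB(a+1,k)\sim\Gamma(a+1)k^{-a}$ via an $\eps$-splitting that isolates the region $s\le\eps$ where the tail hypothesis gives $\P(\zeta_1>-\lambda^{-1}\log s)=(c+o(1))s^a$ uniformly, the complementary region contributing only an exponentially small error. The paper instead recognizes $\P(\tilde{X}_1>k)=\int_0^\infty e^{-ku}\,\mu_\lambda({\rm d}u)$ as the Laplace transform of the push-forward measure $\mu_\lambda=\mu\circ f_\lambda^{-1}$ with $f_\lambda(u)=-\ln(1-e^{-\lambda u})$, checks that $\mu_\lambda((0,x])\sim c\,x^{a}$ as $x\to0^+$, and then invokes Karamata's Tauberian theorem to transfer this to the stated asymptotics of the transform. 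The two arguments exploit the same change of variable and the same consequence of \eqref{zetatail}; yours buys a self-contained, elementary proof (essentially an explicit instance of the Tauberian argument, in the spirit of the heuristic computation given after Theorem \ref{expthm} for the exactly exponential case), while the paper's citation of the Tauberian theorem makes the proof shorter and delegates the uniformity bookkeeping you carry out by hand. Two minor points, neither a gap: your integration by parts is cleanest justified by Fubini, writing $(1-e^{-\lambda y})^k=\int_0^y k\lambda(1-e^{-\lambda u})^{k-1}e^{-\lambda u}\,{\rm d}u$ and interchanging, which also covers distributions of $\zeta_1$ with atoms; and in the sandwich step one should state explicitly that $\limsup_k k^a\P(\tilde X_1>k)\le (c+\sup_{(0,\eps]}|r|)\Gamma(a+1)$ and the matching $\liminf$ bound before letting $\eps\to0$, which is exactly what your description amounts to.
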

\begin{proof}
 Let $\mu$ denote the law of $\zeta_1$. Consider the bijective function $f_\lambda:(0,\infty) \to (0,\infty)$ defined via
 $$f_\lambda(u):=-\ln(1-e^{-\lambda u}),\quad u\in (0,\infty),$$
 and the measure $\mu_\lambda:=\mu\circ f_\lambda^{-1}$, i.e. $\mu_\lambda$ is the push forward measure of $\mu$ by $f_\lambda$. With these definitions, we have
 $$\P(\tilde{X}_1>k)=\int_0^{\infty}(1-e^{-\lambda u})^k\P(\zeta_1\in {\rm d} u)=\int_0^\infty e^{-k u} \mu_\lambda ({\rm d}u),$$
 i.e. the tail of $\tilde{X}_1$ is the Laplace transform of the measure $\mu_\lambda$. The asymptotic behaviour of this Laplace transform around $\infty$ relates to the behaviour around zero of the distribution function of $\mu_\lambda$ via a Tauberian theorem. Let us make this precise. Set $\bar{\mu}(x)\coloneqq \mu((y,\infty))$, $y>0$, and note that
 $$\mu_{\lambda}((0,x])=\bar{\mu}(f_\lambda^{-1}(x)).$$
 Hence, using the substitution $y=f_\lambda^{-1}(x)$, we obtain
 \begin{align*}
  \lim_{x\to 0+} \frac{\mu_{\lambda}((0,x])}{x^{a}}=\lim_{y\to\infty} \frac{\bar{\mu}(y)}{(f_\lambda(y))^a}=\lim_{y\to\infty} \frac{\bar{\mu}(y)}{e^{-\lambda a y}}=c.
 \end{align*}
The result then follows directly from Karamata's Tauberian theorem (Theorem 1.7.1 of \cite{bgt}).
\end{proof}

\begin{proof}[Proof of Theorem \ref{expthm}]
Let us denote by $(\tilde{\Psi}_{n,N}(d))_{d=0}^\infty$ the ancestral process associated with the model whose family sizes (before sampling) are $(\tilde{X}_1,\ldots,\tilde{X}_N)$, and set $a\coloneqq\gamma/\lambda$. 

Since the law of $\zeta_1$ satisfies \eqref{zetatail}, we conclude from Lemma \ref{l-tail} that the tails of $\tilde{X}_1$ satisfy \eqref{jc}. Hence, Theorem 4 in \cite{sch03} implies that: i) if $a\geq 1$, the processes $(\tilde{\Psi}_{n,N}(\lfloor t/\tilde{c}_N\rfloor))_{t\geq 0}$ converge as $N\to\infty$ to the $\Lambda$-coalescent in which $\Lambda$ is $\delta_0$ if $a\geq 2$ or the Beta$(2-a,a)$ distribution if $a\in[1,2)$, and ii) if $a\in(0,1)$, the processes $(\tilde{\Psi}_{n,N}(d))_{d=0}^\infty$ converge as $N\to\infty$ to the $\Xi_a$-coalescent described in Theorem 4(d) of \cite{sch03}. 

It remains to prove that that the processes $({\Psi}_{n,N}(d))_{d=0}^\infty$ and $(\tilde{\Psi}_{n,N}(d))_{d=0}^\infty$, in the appropriate time scale, have the same limiting genealogy. For this, note that, under \eqref{zetatail} and \eqref{zetaTN}, conditions \eqref{condTN} and \eqref{condTNb} are satisfied. 
Hence, the result follows combining Proposition \ref{equiva} with Theorem~2.1 in \cite{mosa01}.
\end{proof}

\section{Spring and summer}\label{summersec}

In this section we prove Proposition \ref{springsummer}, which, informally speaking, states that under mild hypotheses, the length of the summer $T_N-t_N$ does not change the genealogy of the population.  The key to the argument will be the following lemma about Polya urns.

\begin{lemma}\label{polya}
Consider a Polya urn in which there are initially $M$ balls, all of different colors.  We repeatedly draw a ball at random from the urn and return it to the urn along with another ball of the same color, until there are $N$ balls in the urn.  Then if we choose two balls at random from the urn, the probability that they are the same color is at most $2/(M+1)$.
\end{lemma}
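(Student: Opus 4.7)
The plan is to compute the desired probability in closed form via a one-step recursion for the symmetric functional that governs the probability of a same-color pair. Let $C_i^{(n)}$ denote the number of balls of color $i$ when the urn contains $n$ balls, and define
\[
Q_n \coloneqq \sum_{i=1}^M C_i^{(n)}\bigl(C_i^{(n)} - 1\bigr).
\]
Conditional on the configuration at step $N$, the probability that two uniformly chosen balls share a color is exactly $Q_N/(N(N-1))$, so it suffices to show that $E[Q_N] \leq 2N(N-1)/(M+1)$. Initially $Q_M = 0$ since all $M$ balls have distinct colors, so the bound will follow once we track the growth of $E[Q_n]$.

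Next I would derive a one-step recursion for $E[Q_n]$. When the urn has $n-1$ balls and we add a new ball, the chosen color is $i$ with probability $C_i^{(n-1)}/(n-1)$, and the increment in $Q$ is $(C_i^{(n-1)}+1)C_i^{(n-1)} - C_i^{(n-1)}(C_i^{(n-1)} - 1) = 2 C_i^{(n-1)}$. Averaging gives
\[
E[Q_n \mid \mathcal{F}_{n-1}] = Q_{n-1} + \frac{2}{n-1}\sum_{i=1}^M \bigl(C_i^{(n-1)}\bigr)^2 = Q_{n-1} + \frac{2}{n-1}\bigl(Q_{n-1} + (n-1)\bigr) = \frac{n+1}{n-1}\,Q_{n-1} + 2,
\]
where I used that $\sum_i C_i = n-1$.

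Taking expectations, the substitution $a_n \coloneqq E[Q_n]/(n(n+1))$ turns the recursion into the telescoping identity $a_n = a_{n-1} + 2/(n(n+1))$ with $a_M = 0$. Summing yields
\[
a_N = 2\sum_{k=M+1}^{N}\Bigl(\tfrac{1}{k} - \tfrac{1}{k+1}\Bigr) = \frac{2}{M+1} - \frac{2}{N+1},
\]
hence $E[Q_N] = 2N(N-M)/(M+1)$. Dividing by $N(N-1)$ gives the exact value
\[
\Pbb(\text{two random balls have the same color}) = \frac{2(N-M)}{(M+1)(N-1)} \leq \frac{2}{M+1},
\]
with the bound immediate from $N-M \leq N-1$ (i.e.\ $M \geq 1$).

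There is no genuine obstacle here; the only mild subtlety is recognizing that $Q_n$ (rather than the full distribution of the composition) is the right quantity to track, and choosing the normalization $n(n+1)$ that makes the recursion telescope. Exchangeability of the Polya urn is what guarantees that $E[Q_N]/(N(N-1))$ really is the sampling probability we want, but beyond that the argument is purely a two-line computation.
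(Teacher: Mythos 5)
Your proof is correct, and in fact it proves more than the lemma: your recursion gives the exact value $\frac{2(N-M)}{(M+1)(N-1)}$ for the matching probability, from which the bound $2/(M+1)$ is immediate (and visibly sharp as $N\to\infty$ with $M$ fixed). Your route is genuinely different from the paper's. The paper conditions on how many of the two sampled balls belong to the original $M$ (zero, one, or two): two originals can never match, a mixed pair matches with conditional probability controlled by symmetry, and for two later-added balls the exchangeability of the Polya urn reduces the question to whether the \emph{first two} balls added share a color, which is exactly $2/(M+1)$; summing the contributions gives the bound with no moment computation. You instead track the expected number of ordered same-color pairs $E[Q_n]$ through a one-step recursion and a telescoping normalization, which is entirely elementary and self-contained and yields a closed form, at the cost of a short calculation. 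One small remark: the identification of the sampling probability with $E[Q_N]/(N(N-1))$ needs only the tower property (condition on the final urn composition and sample without replacement), not exchangeability; exchangeability is the ingredient the paper's argument relies on, not yours. Either argument suffices for the way the lemma is used in Proposition \ref{springsummer}, where only the upper bound $2/(M+1)$ matters.
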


\begin{proof}
Let $D$ be the event that both balls have the same color.  For $k \in \{0, 1, 2\}$, let $R_k$ be the event that $k$ out of the two balls chosen are among the $M$ balls that were in the urn at the beginning.  Then $P(R_2|D) = 0$ because the initial $M$ balls all have different colors.  By symmetry, we have $P(R_1|D) = 1/M$.  Finally, the well-known exchangeability of Polya urns implies that $P(R_0|D)$ is the same as the probability that the first two balls added to the urn are the same color, which is $2/(M+1)$.  It follows that $P(D) = \sum_{k=0}^2 P(R_k)P(D|R_k) \leq 2/(M+1)$.
\end{proof}

\begin{proof}[Proof Proposition \ref{springsummer}]
For the population with a summer, we want to bound the probability $q_N$ that two individuals chosen at random at time $T_N$ have the same ancestor at time $t_N$.  Let $M = Y_{1,N} + \dots + Y_{N,N}$ be the number of individuals alive at time $t_N$.  Between times $t_N$ and $T_N$, the sizes of the families started by the $M$ individuals at time $t_N$ evolve as independent Yule processes.  Using the a well-known connection between Yule processes and Polya urns, one can see that the sequence keeping track of the families into which successive individuals are born follows the same dynamics as the sequence keeping track of the colors of successive balls added to a Polya urn starting from $M$ balls of different colors.  Thus, by Lemma \ref{polya}, the probability, conditional on $M$, that two individuals chosen at random at time $T_N$ have the same ancestor at time $t_N$ is at most $2/(M+1)$, which means the unconditional probability satisfies $$q_N \leq 2 E \bigg[ \frac{1}{Y_{1,N} + \dots + Y_{N,N}} \bigg].$$

Consider the canonical coupling of $\Psi_{n,N}$ and $\hat \Psi_{n,N}$ using the same activation times $\tau_{i,N}^{(d)}$ and the same birth times during the spring for both processes.  Let $D_N$ be the first day (or year) that at least one pair of individuals in the sample in $\Psi_{n,N}$ finds a common ancestor during the summer, that is, in $(t_N,T_N]$. By the coupling, the processes $\Psi_{n,N}$ and $\hat \Psi_{n,N}$ coincide until time $D_N$, and by the observations above and condition \eqref{H}, for all $K > 0$ we have
 \begin{align*}
&\P\big(\Psi_{n,N}(d) = \hat \Psi_{n,N}(d) \:\: \forall d = 0, \ldots, \lfloor K\rho_N\rfloor \big) \\
&\hspace{1in}\geq  \P(D_N > K\rho_N) \geq \left(1- 2 \binom{n}{2} \E\left[\frac{1}{Y_{1,N}+\cdots+Y_{N,N}}\right]\right)^{K\rho_N} \xrightarrow{N\rightarrow \infty} 1,
 \end{align*}
which implies the result.
\end{proof}

\section{Classifying the possible limits}\label{classsec}

In this section, we prove Theorem \ref{charthm}, which classifies all possible $\Lambda$-coalescents that can arise as limits in the model introduced in Section \ref{modelsec}.  

\begin{rem}\label{probrem}
Suppose the ancestral processes $(\Psi_{n,N}(\lfloor \rho_N t \rfloor))_{t \geq 0}$ converge to the $\Lambda$-coalescent for all $n$, where $\Lambda$ is a finite nonzero measure.  Because $\Lambda([0,1])$ is the rate at which two randomly chosen lineages merge in the $\Lambda$-coalescent and $c_N$ is the probability that two individuals have the same parent in one generation in the Cannings model, we must have $\rho_N c_N \sim \Lambda([0,1])$ as $N \rightarrow \infty$.  It follows that if we replace $\rho_N$ by $1/c_N$, then all of the transition rates in the limit will be multiplied by $1/\Lambda([0,1])$.  Therefore, the ancestral processes $(\Psi_{n,N}(\lfloor t/c_N \rfloor))_{t \geq 0}$ will converge for all $n$ to the $\tilde{\Lambda}$-coalescent, where $\tilde{\Lambda} = \Lambda/\Lambda([0,1])$ is a probability measure.

Conversely, suppose the ancestral processes $(\Psi_{n,N}(\lfloor t/c_N \rfloor))_{t \geq 0}$ converge for all $n$ to the $\Lambda$-coalescent, where $\Lambda$ is a probability measure.  Let $a > 0$.  Then, if we choose $\rho_N = a/c_N$, the ancestral processes $(\Psi_{n,N}(\lfloor \rho_N t \rfloor))_{t \geq 0}$ will converge for all $n$ to the $a \Lambda$-coalescent.  We can also, of course, obtain convergence to the zero measure by choosing $\rho_N$ such that $\rho_N c_N \rightarrow 0$ as $N \rightarrow \infty$.
\end{rem}

In view of Remark \ref{probrem}, we may restrict our attention to the case in which $\rho_N = 1/c_N$, and to show that the possible $\Lambda$-coalescents that can arise as limits with this choice of scaling are precisely the probability measures satisfying (\ref{decomp}) and (\ref{hdef}).  Proposition \ref{construction} shows how to obtain all of these $\Lambda$-coalescents as limits.

\begin{prop}\label{construction}
 Let $\Lambda$ be a probability measure on $[0,1]$ of the form
 \begin{align}\label{eq:Lambda}
  \Lambda = a_1\delta_0 + a_2\delta_1 + \Lambda'
 \end{align}
 for nonnegative real numbers $a_1$ and $a_2$ and $\Lambda'$ a measure on $(0,1)$ with density $h$ given by \eqref{hdef}. Then there exist choices for birth-rates $(\lambda_N)_{N=1}^{\infty}$, the times $(t_N)_{N=1}^{\infty}$, $(T_N)_{N=1}^{\infty}$ distributions of the wake-up times $(\tau_{1,N})_{N=1}^{\infty}$ such that the ancestral processes $(\Psi_{n,N}(\lfloor t/c_N \rfloor))_{t \geq 0}$ converge for all $n$ to the $\Lambda$-coalescent. 
\end{prop}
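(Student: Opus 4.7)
The plan is to construct, for each $\Lambda$ of the form \eqref{decomp}--\eqref{hdef}, an explicit sequence of models to which Theorem~\ref{Cannings} can be applied. The key observation is that \eqref{declp} exhibits $\Lambda$ as a mixture of the extremal measures $\delta_0$, $\delta_1$, and $\{\Lambda_\kappa\}_{\kappa>0}$, each of which arises as a limit in Theorem~\ref{simplethm}. The natural strategy is therefore to make the wake-up distribution itself a mixture of the early-bird regimes that produce these extremals.

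Concretely, I would take $\lambda_N=1$, $t_N=T_N$ with $T_N/\log N\to\infty$ slowly, and set $\sigma_N(\kappa)\coloneqq(T_N-\log(\kappa N))\vee 0$ for $\kappa\in(0,\infty]$, so that an individual waking at $\sigma_N(\kappa)$ has family size approximately geometric with mean of order $\kappa N$ (with $\kappa=\infty$ corresponding to waking at time $0$, i.e.\ to the star-shaped regime of Theorem~\ref{simplethm}(i)). Take the wake-up distribution to be: with probability $1-\omega_N$, wake at $T_N$ (one descendant); with probability $\omega_N$, be an ``early bird'' whose type $K$ is drawn from an auxiliary probability law $\nu$ on $(0,\infty]$, and wake at $\sigma_N(K)$. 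From \eqref{declp}, the law $\nu$ is chosen so that $\nu({\rm d}\kappa)\propto E[Y_\kappa^2]\,\eta({\rm d}\kappa)$ on $(0,\infty)$ and $\nu(\{\infty\})\propto a_2$; the Kingman atom $a_1\delta_0$ is encoded by placing a further atom of $\nu$ at a vanishing $\kappa_N\downarrow 0$, exploiting the weak convergence $\Lambda_{\kappa_N}\Rightarrow\delta_0$ as $\kappa_N\downarrow 0$. The normalization constant and $\omega_N$ are tuned so that $N\omega_N\to 0$ and $c_N$ has the correct linear-in-$\omega_N$ asymptotics predicted by Lemma~\ref{c1-J}.

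To verify Theorem~\ref{Cannings}: Condition~1 follows from $N\omega_N\to 0$ together with Lemma~\ref{c1-J}, and Condition~2 is immediate from Lemma~\ref{nu12tail}. The tail Condition~3 is the heart. Conditional on being an early bird of type $K=\kappa$, the variable $X_{1,N}$ is geometric with mean of order $\kappa N$, and by Lemmas~\ref{LLNlem} and~\ref{iidXS} one has $\bar S_{N-1}/N\to 1$ in probability, whence the conditional law of $\nu_{1,N}/N$ given $K=\kappa$ converges weakly to $Y_\kappa$, exactly as in the proof of Theorem~\ref{simplethm}. Integrating against $\nu$ and invoking \eqref{declp} delivers
$$
\lim_{N\to\infty}\frac{N}{c_N}P(\nu_{1,N}>Nx)=\int_x^1 y^{-2}\,\Lambda({\rm d}y)
$$
at every continuity point $x\in(0,1)$ of $\Lambda$, which is precisely Condition~3.

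The main obstacle is that the conditioning argument for Condition~3 must be uniform in $\kappa$, which is delicate when $\eta$ has mass accumulating near $0$ or $\infty$, and is closely related to the subtlety in encoding $a_1\delta_0$ via the vanishing atom $\kappa_N$. The cleanest way around this is a two-step approximation: first establish the construction when $\eta$ is a finite sum of atoms supported on a compact subinterval of $(0,\infty)$ (in which case the mixture verification is direct and the contributions of the finitely many early-bird types do not interact in the limit), and then extend to general $\eta$ by approximating $\Lambda$ in an appropriate topology by probability measures of this special form, with $a_1\delta_0$ and $a_2\delta_1$ handled via the limiting regimes described above. The uniqueness of the decomposition recorded in the fifth remark following Theorem~\ref{charthm} makes this approximation well-defined, and a standard diagonal argument then yields a single sequence of models whose ancestral processes converge to the desired $\Lambda$-coalescent.
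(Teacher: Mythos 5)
Your overall architecture is the same as the paper's: build the wake-up law as a mixture of the two-point ``early bird'' regimes of Section \ref{ssec:simple} (a very early wake-up for the $\delta_1$ part, wake-ups at $T_N-\log(\kappa N)$ for the $\Lambda_\kappa$ parts, a vanishing $\kappa_N$ for the $\delta_0$ part) and verify the three conditions of Theorem \ref{Cannings}. However, your choice of the mixing measure is wrong, and this is a genuine gap rather than a presentational one. In your model, a type-$\kappa$ early bird appears in a given generation with probability of order $N\omega_N\,\nu({\rm d}\kappa)$ (with $\nu$ your auxiliary law), and conditionally on this the merged fraction $\nu_{1,N}/N$ converges to $Y_\kappa$; hence on the time scale $1/c_N$ the limiting merger-rate measure on $(0,1)$ is
\begin{equation*}
y^{-2}\Lambda({\rm d}y)\;=\;\lim_{N\to\infty}\frac{N\omega_N}{c_N}\int_{(0,\infty)}\P\big(Y_\kappa\in{\rm d}y\big)\,\nu({\rm d}\kappa).
\end{equation*}
Since $N\omega_N/c_N$ is a scalar, matching this with $y^{-2}\Lambda'({\rm d}y)=\big(\int_0^\infty f_\kappa(y)\,\eta({\rm d}\kappa)\big)\,{\rm d}y$ forces $\nu({\rm d}\kappa)\propto\eta({\rm d}\kappa)$ on $(0,\infty)$, not $\propto \E[Y_\kappa^2]\,\eta({\rm d}\kappa)$ as you prescribe. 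In \eqref{declp} the weight $\E[Y_\kappa^2]$ cancels against the normalization $1/\E[Y_\kappa^2]$ hidden inside $\Lambda_\kappa$; dynamically, $\E[Y_\kappa^2]$ is exactly the contribution of a type-$\kappa$ early bird to $c_N$ per unit of early-bird probability (Lemma \ref{c1-J}, part 2), so it is produced automatically by the model and must not also be inserted into the type frequencies. With your weights the construction converges to the coalescent built from the measure $\E[Y_\kappa^2]\,\eta({\rm d}\kappa)$ in place of $\eta$, which is the wrong $\Lambda$ unless $\E[Y_\kappa^2]$ is $\eta$-a.e.\ constant; for instance with $\eta({\rm d}\kappa)=\kappa^{-1-a}\,{\rm d}\kappa$ you would not obtain the Beta$(2-a,a)$ coalescent.

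Correcting the weights re-creates the difficulty you tried to sidestep: $\eta$ is only required to satisfy $\int_0^\infty(1\wedge\kappa^2)\,\eta({\rm d}\kappa)<\infty$ and may have infinite total mass, so $\nu\propto\eta$ cannot be a fixed probability law. The paper's proof resolves this directly: it truncates $\kappa$ to $[N^{-r/2},N]$, gives the $\kappa$-types total weight of order $N^{-2r}\eta([N^{-r/2},N])\to 0$ and the star type weight of order $N^{-2r}a_2$, assigns the remaining (dominant) mass to the Kingman-type wake-up $\log(N^{1-r})$ (your $\kappa_N=N^{-r}$, whose weight must exceed the other weights by a factor of order $\kappa_N^{-2}$ so that its contribution to $c_N$ stays comparable), and then passes to the limit in the $\kappa$-integrals by dominated convergence, using $\E[Y_\kappa^2]\le 2\kappa^2$ and the uniform bound $c(\kappa,N)^{-1}\le 2\kappa$ to dominate by a constant multiple of $\kappa^2\wedge 1$. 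Your alternative route---finitely many atoms on a compact $\kappa$-interval first, then approximation of $\Lambda$ and a diagonal argument---is not implausible (the coalescent rates depend on $\Lambda$ through bounded continuous integrands, and convergence in $\Db_{\cP_n}[0,\infty)$ is metrizable), but as written it is only a sketch: you would still need to exhibit approximating measures within the class \eqref{hdef}, control the model-dependent time scale $c_N$ along the diagonal, and re-verify condition 3 of Theorem \ref{Cannings} for the diagonal sequence, none of which is carried out, and all of which the paper's truncation-plus-domination argument makes unnecessary.
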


\begin{proof}
We will construct the approximating Cannings model as a mixture of the simple two-point models discussed in Section \ref{ssec:simple} and deduce the convergence of their ancestral processes to the desired $\Lambda$-coalescent from Theorem \ref{Cannings} using the analogous observations for the simple models made in Section \ref{2ptsec}. 

For any $N \in \N$, let the length of spring be $t_N = T_N = \log(N^2)$. This is an arbitrary choice that ensures that the length of spring is finite in the Cannings model, but grows to infinity as $N\rightarrow \infty$.  Let 
$$\omega_N = N^{-2},$$
and let $b_{1,N}, \ldots, b_{N,N}$ be i.i.d. Bernoulli random variables with parameter $\omega_N$. Choose the wake-up times to be of the form 
\begin{align*}
 \tau_{i,N} = (1-b_{i,N})T_N + b_{i,N}\tilde \tau_{i,N}, \quad i \in [N],
\end{align*}
where $\tilde\tau_{1,N}, \ldots, \tilde\tau_{N,N}$ are i.i.d. with a distribution that will depend on the specific form of $\Lambda$ in \eqref{eq:Lambda}, and $(b_{i,N})_{i \in[N]}$ and $(\tilde \tau_{i,N})_{i \in[N]}$ are independent, for any fixed $N \in \N$.  Here $\tilde \tau_{i,N}$ describes the wake-up time of the individual $i$ given that it woke up early. 
 Assuming $\lambda_N\equiv 1$, the number of offspring of individual $i$ at the end of spring before resampling is thus given by 
\begin{align*}
 X_{i,N} = (1-b_{i,N}) + b_{i,N}G_{i,N}
\end{align*}
where $G_{i,N}$ has a geometric distribution on $\N$ with parameter $e^{-(T_N - \tilde \tau_{i,N})}$. Abbreviate $\sigma_{i,N}:=T_N-\tilde\tau_{i,N}$. 

Like in \eqref{eq:cN_only_1_matters} in the simple cases, we immediately see 
\begin{equation}\label{eq:cNsim}
  c_N \sim N\omega_NE\left[\frac{(G_{1,N})_2}{(G_{1,N} +N - 1)_2}\right].
\end{equation}
Note that this observation holds for any distribution of $\sigma_{i,N}$.
In particular, if we denote by $c^{(1)}_N$, $c^{(2), \kappa}_N$  and $c^{(3)}_N$ the analogs of $c_N$ in the simple model with $\beta=1+r$, $\beta=1$ and $\beta=1-r$, respectively, by Lemma \ref{c1-J} we know
\begin{align}\label{eq:lemma3.1}
 N \omega_NE\left[\frac{(G_{1,N})_2}{(G_{1,N} +N - 1)_2}\;\bigg\vert\; \sigma_{1,N}=\log(N^{1+r}) \right] \sim &\ c^{(1)}_N \sim  N\omega_N,\\
 N \omega_NE\left[\frac{(G_{1,N})_2}{(G_{1,N} +N - 1)_2}\;\bigg\vert\; \sigma_{1,N}=\log(\kappa N)\right] \sim &\ c^{(2), \kappa}_N\sim N\omega_NE[Y^2_{\kappa}],\quad \kappa>0,\notag\\
 N \omega_NE\left[\frac{(G_{1,N})_2}{(G_{1,N} +N - 1)_2}\;\bigg\vert\; \sigma_{1,N}=\log(N^{1-r}) \right] \sim &\ c^{(3)}_N \sim 2\omega_NN^{1-2r}.\notag
\end{align}
Assume more specifically that $r \in (0, 1/2),$
and define
\begin{align*}
\alpha_N:=\int_{N^{-r/2}}^N \eta(\dd\kappa).
\end{align*}
Since we assume $\int_0^{\infty} (\kappa^2\wedge1)\eta(\dd \kappa)<\infty$, we can estimate
\begin{align}\label{eq:estalpha1}
 \alpha_N=\int_{N^{-r/2}}^N \eta(\dd\kappa) = \int_{N^{-r/2}}^N \frac{1}{(\kappa^2\wedge1)} (\kappa^2\wedge1)\eta(\dd\kappa) \leq N^{r}\int_0^{\infty} (\kappa^2\wedge1)\eta(\dd \kappa).
 \end{align}
In particular, $N^{-2r}\alpha_N \rightarrow 0$ as $N\rightarrow \infty$. 
 
We now distinguish different cases for the possible choices of $\Lambda$ in \eqref{eq:Lambda}. Let us first treat the case where $a_1>0$.  Using $X :=_d \mu$ to denote that we are defining a random variable $X$ to have the distribution $\mu$, for sufficiently large $N$ we can define
 \begin{align*}
  \sigma_{1,N} &:=_d \frac{2N^{-2r}\alpha_N}{a_1} \int_{N^{-r/2}}^N \delta_{(\log(\kappa N)\wedge T_N)}\frac{\eta(\dd \kappa)}{\alpha_N}\\
		& \qquad \qquad +\frac{2N^{-2r}a_2}{a_1} \delta_{(\log(N^{1+r})\wedge T_N)} + \left(1- \frac{2N^{-2r}(\alpha_N + a_2)}{a_1}\right)\delta_{(\log(N^{1-r})\wedge T_N)}\notag\\
		 & = \frac{2N^{-2r}\alpha_N}{a_1} \int_{N^{-r/2}}^{N}\delta_{\log(\kappa N)}\frac{\eta(\dd \kappa)}{\alpha_N}\\
		& \qquad \qquad +\frac{2N^{-2r}a_2}{a_1} \delta_{\log(N^{1+r})} + \left(1- \frac{2N^{-2r}(\alpha_N + a_2)}{a_1}\right)\delta_{\log(N^{1-r})}.\notag
 \end{align*}
 If $\eta$ is not the zero-measure, then $\alpha_N$ is positive and the integral in the first term is an integral with respect to the probability measure $\1_{[N^{-r/2}, N]}(\kappa)\eta(\dd \kappa)/\alpha_N$.  Otherwise the first term is simply zero. 
 
Note that this $\sigma_{1,N}$ is precisely a mixture of the simple cases discussed above. We now check the three conditions in Theorem \ref{Cannings} to obtain the desired convergence.  Let $G_{1,N}^{\kappa}$ be a random variable having a geometric distribution on $\N$ with parameter $(\kappa N)^{-1}$.
Conditioning on the possible values of $\sigma_{1,N}$ in \eqref{eq:cNsim}, and using \eqref{eq:lemma3.1} in the last step, which determines the asymptotic behavior of the last two summands, we then get
\begin{align}\label{eq:cN_1}
c_N	& \sim N  \omega_N  \bigg\{ \frac{2N^{-2r}\alpha_N}{a_1}\int_{N^{-r/2}}^{N} E\left[\frac{(G_{1,N})_2}{(G_{1,N} +N - 1)_2}\;\bigg\vert\; \sigma_{1,N}=\log(\kappa N) \right]\frac{\eta(\dd \kappa)}{\alpha_N} \notag\\
	& \qquad \qquad \qquad  + \frac{2N^{-2r} a_2}{a_1} E\left[\frac{(G_{1,N})_2}{(G_{1,N} +N - 1)_2}\;\bigg\vert\; \sigma_{1,N}=\log(N^{1+r})\right] \notag\\
	& \qquad \qquad \qquad \qquad +\left(1-2N^{-2r}\frac{\alpha_N + a_2}{a_1}\right)E\left[\frac{(G_{1,N})_2}{(G_{1,N} +N - 1)_2}\;\bigg\vert\; \sigma_{1,N}=\log( N^{1-r}) \right]\bigg\}\notag\\
	& \sim N  \omega_N\frac{2N^{-2r}}{a_1}\int_{N^{-r/2}}^{N}  E\left[\frac{(G_{1,N}^{\kappa})_2}{(G_{1,N}^{\kappa} +N - 1)_2}\right]\eta(\dd \kappa) \notag\\
	& \qquad \qquad \qquad \qquad + \frac{2N^{-2r} a_2}{a_1} c^{(1)}_N +\left(1-2N^{-2r}\frac{\alpha_N + a_2}{a_1}\right)c^{(3)}_N
  \end{align}
Due to the integral, the first summand requires a bit more care.  Equation \eqref{eq:lemma3.1} yields
  \begin{align*}
   \lim_{N\rightarrow \infty} E\left[\frac{(G_{1,N}^{\kappa})_2}{(G_{1,N}^{\kappa} +N - 1)_2}\right] = E\left[Y_{\kappa}^2\right].
  \end{align*}
By \eqref{eq:Y_is_W}, we have $Y_{\kappa} = \kappa W/(\kappa W+1)$, where $W$ has an exponential distribution with parameter~1.  Therefore, using (\ref{Ydef}) and the fact that $E[Y_{\kappa}^2] = \int_0^{\infty} P(Y_{\kappa} > \sqrt{x}) \, \dd x$, we estimate
 \begin{align}\label{eq:kappa2}
  \int_0^1e^{-\frac{1}{\kappa}\frac{\sqrt{x}}{1-\sqrt{x}}}\dd x = E\left[Y_{\kappa}^2\right] = E\left[\left(\frac{\kappa W}{\kappa W+1}\right)^2\right] \leq \kappa^2 E[W^2] = 2\kappa^2
 \end{align}
 for any $\kappa >0$. Like in the simple cases \eqref{i2b}, we obtain 
 \begin{align*}
  E\left[\frac{(G_{1,N}^{\kappa})_2}{(G_{1,N}^{\kappa} +N - 1)_2} \right] = E\left[\left(\frac{G_{1,N}^{\kappa}}{G_{1,N}^{\kappa}+N-1}\right)^2\right]-E\left[\frac{(N-1)G_{1,N}^{\kappa}}{(G_{1,N}^{\kappa}+N-1)(G_{1,N}^{\kappa}+N-1)_2}\right],             
 \end{align*}
and the second term is smaller than $1/N$. Using \eqref{eq:estalpha1}
 \begin{align}\label{eq:1/Nint}
  \int_{N^{-r/2}}^{N} E\left[\frac{(N-1)G_{1,N}^{\kappa}}{(G_{1,N}^{\kappa}+N-1)(G_{1,N}^{\kappa}+N-1)_2}\right]\eta(\dd \kappa) \leq N^{-1}N^{r}\int_0^{\infty}(\kappa^2\wedge 1)\eta(\dd \kappa) \xrightarrow{N\rightarrow \infty} 0.
 \end{align}
 Like in \eqref{i3} and below, we see
 \begin{align*}
  E\left[\left(\frac{G_{1,N}^{\kappa}}{G_{1,N}^{\kappa}+N-1}\right)^2\right] 	& = \int_{0}^1\P\left( G_{1,N}^{\kappa}>\frac{(N-1)\sqrt{x}}{1-\sqrt{x}} \right)\dd x =\int_{0}^1\left (1-\frac{1}{\kappa N}\right)^{\left\lfloor (N-1)\frac{\sqrt{x}}{1-\sqrt{x}}\right\rfloor}\dd x\\
&\leq \left(1-\frac{1}{\kappa N}\right)^{-1}\int_0^1e^{\frac{\sqrt{x}}{1-\sqrt{x}}(N-1)\log\left(1-\frac{1}{\kappa N}\right)} \dd x \\
&= \left(1-\frac{1}{\kappa N}\right)^{-1} E\left[Y_{\frac{1}{c(\kappa,N)}}^2\right],
 \end{align*}
 if we define
 \begin{align*}
  c(\kappa, N):= -(N-1)\log\left(1-\frac{1}{\kappa N}\right)>0.
 \end{align*}
 Using \eqref{eq:kappa2}, for any $N$ sufficiently large we can bound
 \begin{align}\label{eq:c2}
   E\left[\left(\frac{G_{1,N}^{\kappa}}{G_{1,N}^{\kappa}+N-1}\right)^2\right]  \leq 3c(\kappa,N)^{-2}.
 \end{align}
 Standard calculus shows that  $\kappa c(\kappa, N)$ is decreasing in $\kappa$, and we can therefore estimate 
\begin{align*}
  \inf_{N^{-r/2}\leq \kappa\leq 1} \kappa c(\kappa, N) = c(1, N) = -(N-1)\log\left(1-\frac{1}{N}\right) \xrightarrow{N\rightarrow \infty} 1.
 \end{align*}
 This yields a uniform upper bound for $c(\kappa,N)^{-1}$ because for all $N$ sufficiently large
 \begin{align}\label{eq:ckappa}
  \forall \kappa \in [N^{-r/2},1], \text{\; we have \,} \kappa c(\kappa,N)\geq \frac{1}{2} \text{ \; and therefore \; } c(\kappa,N)^{-1} \leq 2\kappa.
 \end{align}
 This allows us to estimate 
 \begin{align*}
  \1_{[N^{r/2},N]}(\kappa) E\left[\left(\frac{G_{1,N}^{\kappa}}{G_{1,N}^{\kappa}+N-1}\right)^2\right] & \leq \1_{[N^{r/2},1]}(\kappa)3c(\kappa, N)^{-2} +  \1_{(1,N]}(\kappa)\\
													& \leq \1_{[N^{r/2},1]}(\kappa)8\kappa^2 +  \1_{(1,N]}(\kappa) \\
													& \leq \1_{[N^{r/2},N]}(\kappa)12(\kappa^2\wedge 1).
 \end{align*}

Since we assume $\int_0^{\infty} (\kappa^2\wedge1)\eta(\dd \kappa)<\infty$, we have found an integrable upper bound.  Lebesgue's dominated convergence theorem and the fact that $c(\kappa, N)^{-1} \rightarrow \kappa$ as $N \rightarrow \infty$ yield
 \begin{align*}
  \lim_{N\rightarrow \infty}\int_0^{\infty}\1_{[N^{-r/2}, N]}(\kappa)E\left[\left(\frac{G_{1,N}^{\kappa}}{G_{1,N}^{\kappa}+N-1}\right)^2\right] \eta(\dd \kappa) = \int_0^{\infty}E\left[Y_{\kappa}^2\right]\eta(\dd \kappa).
 \end{align*}
 If we combine this with \eqref{eq:1/Nint}, we obtain
 \begin{align}\label{eq:Y2isthelimit}
  \lim_{N\rightarrow\infty}\int_{N^{-r/2}}^{N} E\left[\frac{(G_{1,N}^{\kappa})_2}{(G_{1,N}^{\kappa} +N - 1)_2} \right]\eta(\dd \kappa) = \int_0^{\infty}E\left[Y_{\kappa}^2\right]\eta(\dd \kappa).
 \end{align}
 It follows from (\ref{declp}) that
 \begin{equation}\label{EYkappa2}
 \int_0^{\infty} E[Y_{\kappa}^2] \: \eta(\dd \kappa) = \Lambda'([0,1]) = 1 - a_1 - a_2,
 \end{equation}
 and it was noted after (\ref{eq:estalpha1}) that $N^{-2r} \alpha_N \rightarrow 0$ as $N \rightarrow \infty$.  Therefore, plugging (\ref{eq:Y2isthelimit}) into \eqref{eq:cN_1} yields
 \begin{align}\label{eq:cNasympt}
  c_N 	& \sim N  \omega_N\frac{2N^{-2r}}{a_1}\int_0^{\infty}E\left[Y_{\kappa}^2\right]\eta(\dd \kappa) + \frac{2N^{-2r} a_2}{a_1}  N  \omega_N +\left(1-2N^{-2r}\frac{\alpha_N + a_2}{a_1}\right)2\omega_NN^{1-2r}.\notag \\
	& \sim N^{1-2r}\omega_N \left(\frac{2}{a_1}(1-a_1-a_2) + \frac{2a_2}{a_1} + 2\right) = N^{1-2r}\omega_N \frac{2}{a_1}
 \end{align}
 In particular, $c_N\rightarrow 0$ as $N\rightarrow \infty$ and the first condition of Theorem \ref{Cannings} holds.
 The second condition of Theorem \ref{Cannings} now follows directly from Lemma \ref{nu12tail}.

We are only left to verify the third condition. 
As in the proof of Theorem \ref{simplethm}, let $A := \{ \tau_{1,N} = 0\}=\{ b_{1,N} = 1\}$ be the event that the individual with label $1$ woke up early, and let $B:=\{\sum_{i=1}^N b_{i,N} =2\}$ be the event that at least two individuals woke up early.
Like in the simple case, regardless of the precise distribution of $\sigma_{1,N}$ we have 
 \begin{align*}
 \P\left(\frac{X_{1,N}}{S_N} > y\right) & =\P\left(\frac{X_{1,N}}{S_N} > y \;\big\vert\;  A^c\right)\P(A^c) + \P\left(\frac{X_{1,N}}{S_N} >y \;\big\vert\;  A\cap B\right)\P(A\cap B) \\
					& \qquad \qquad + \P\left(\frac{X_{1,N}}{S_N} > y  \;\big\vert\;  A\cap B^c\right)\P(A\cap B^c).
 \end{align*}
We have  $\P(X_{1,N}/S_N > y \;| \; A^c) \leq \P(1/N>y) = 0$ for $N$ sufficiently large, whence we may ignore this term in further considerations. Also, since $\P(A \cap B) \leq N\omega_N^2$ and we chose $\omega_N=N^{-2}$,
  \begin{align*}
   \limsup_{N\rightarrow \infty} \frac{N}{c_N}\P\left(\frac{X_{1,N}}{S_N} >y \;\big\vert\;  A\cap B\right)\P(A \cap B) \leq \lim_{N\rightarrow\infty}\frac{N}{c_N}N\omega_N^2 = \lim_{N\rightarrow\infty}\frac{a_1}{2}N^{-1+2r}=0.
  \end{align*}
As before, we condition on the different values of $\sigma_{1,N}$.
Letting $G_{1,N}^{\kappa}$, $G_{1,N}^{+}$, and $G_{1,N}^{-}$ be geometric random variables with parameters $(\kappa N)^{-1}$, $N^{-(1+r)}$ and $N^{-(1-r)}$ respectively, we obtain
 \begin{align*}
  \lim_{N\rightarrow\infty}&\frac{N}{c_N}\P\left(\frac{X_{1,N}}{S_N} > y\right)	 = \lim_{N\rightarrow\infty}\frac{N}{c_N}\P\left(\frac{G_{1,N}}{G_{1,N}+N-1}> y \right)\P(A_1\cap B_2^c)\\
			    & = \lim_{N\rightarrow\infty}\frac{N\omega_N}{c_N}\bigg\{\frac{2N^{-2r}\alpha_N}{a_1}\int_{N^{-r/2}}^{N}\P\left(\frac{G_{1,N}}{G_{1,N}+N-1}> y \;\bigg\vert\; \sigma_{1,N}=\log(\kappa N) \right)\frac{\eta(\dd \kappa)}{\alpha_N} \\
			    & \qquad \qquad  + \frac{2N^{-2r} a_2}{a_1}\P\left(\frac{G_{1,N}}{G_{1,N}+N-1}> y \;\bigg\vert\; \sigma_{1,N}=\log(N^{1+r}) \right)\\
			    & \qquad \qquad \qquad +\left(1-2N^{-2r}\frac{\alpha_N + a_2}{a_1}\right)\P\left(\frac{G_{1,N}}{G_{1,N}+N-1}> y\;\bigg\vert\; \sigma_{1,N}=\log( N^{1-r})  \right)\bigg\} \\
			    & = \lim_{N\rightarrow\infty}\int_{N^{-r/2}}^{N}\P\left(\frac{G_{1,N}^{\kappa}}{G_{1,N}^{\kappa}+N-1} > y\right)\eta(\dd \kappa) \\
			    & \qquad \qquad  +  \lim_{N\rightarrow\infty}a_2P\left(\frac{G_{1,N}^{+}}{G_{1,N}^{+}+N-1} >y\right) \\
			    & \qquad \qquad \qquad + \lim_{N\rightarrow\infty}\frac{a_1}{2}N^{2r}\P\left(\frac{G_{1,N}^{-}}{G_{1,N}^{-}+N-1} > y\right).
 \end{align*}	
  Let us consider the three limits separately.  Reasoning as in the proof of Lemma \ref{moments},
 \begin{align*}
   \lim_{N\rightarrow \infty}\P\left(\frac{G_{1,N}^{\kappa}}{G_{1,N}^{\kappa}+N-1} > y\right) = \lim_{N\rightarrow \infty}\left(1-\frac{1}{\kappa N}\right)^{\left\lfloor(N-1)\frac{y}{1-y}\right\rfloor  } = e^{-\frac{1}{\kappa}\frac{y}{1-y}}.
 \end{align*}
 Using Chebychev's inequality together with \eqref{eq:c2} and \eqref{eq:ckappa} we again obtain an integrable upper bound and therefore can use Lebesgue's dominated convergence theorem to obtain
 \begin{align}\label{eq:Pkappa}
  \lim_{N\rightarrow \infty}\int_{N^{-r/2}}^{N}\P\left(\frac{G_{1,N}^{\kappa}}{G_{1,N}^{\kappa}+N-1} > y\right)\eta(\dd \kappa) = \int_0^{\infty}e^{-\frac{1}{\kappa}\frac{y}{1-y}}\eta(\dd \kappa).
 \end{align}
 Likewise,
 \begin{align}\label{eq:Pr_1}
  \lim_{N\rightarrow \infty}\P\left(\frac{G_{1,N}^{+}}{G_{1,N}^{+}+N-1} >y\right) = \lim_{N\rightarrow \infty}\left(1-\frac{1}{ N^{1+r}}\right)^{\left\lfloor(N-1)\frac{y}{1-y}\right\rfloor  } = 1.
 \end{align}
Lastly,
 \begin{align*}
  \lim_{N\rightarrow\infty}\frac{a_1}{2}N^{2r}\P\left(\frac{G_{1,N}^{-}}{G_{1,N}^{-}+N-1} > y\right) 
				& = \lim_{N\rightarrow \infty}\frac{a_1}{2}N^{2r}\left(1-\frac{1}{ N^{1-r}}\right)^{\left\lfloor(N-1)\frac{y}{1-y}\right\rfloor  } = 0.
 \end{align*}
 Combining this, we obtain
 \begin{align}\label{eq:limP}
  \lim_{N\rightarrow\infty}&\frac{N}{c_N}\P\left(\frac{X_{1,N}}{S_N} > y\right) = \int_0^{\infty}e^{-\frac{1}{\kappa}\frac{y}{1-y}}\eta(\dd \kappa) + a_2 + 0
 \end{align}
 for every $y \in (0,1)$. Applying Lemma \ref{LLNlem} as in \eqref{eq:nu_vs_X} and using the identity
 \begin{equation}\label{expid}
e^{-\ell x/(1-x)} = \int_x^1 \frac{\ell}{(1-y)^2} e^{-\ell y/(1-y)} \: {\rm d}y,
\end{equation}
equation (\ref{eq:limP}) implies that for every $x\in (0,1)$, we have
 \begin{align}\label{eq:limnu}
   \lim_{N\rightarrow\infty}\frac{N}{c_N}\P\left(\nu_{1,N} > Nx\right) 	& = \int_0^{\infty}e^{-\frac{1}{\kappa}\frac{x}{1-x}}\eta(\dd \kappa) + a_2 + 0 \nonumber \\
									& = \int_0^{\infty}\int_x^1\frac{1}{\kappa}\frac{1}{(1-y)^2}e^{-\frac{1}{\kappa}\frac{y}{1-y}}\dd y\;\eta(\dd \kappa) + a_2 + 0 \nonumber \\
									& = \int_x^1 y^{-2}h(y)\dd y + a_2 + 0 \nonumber \\
									& = \int_x^1 y^{-2}(\Lambda' + a_2 \delta_1 + a_1 \delta_0)(\dd y).
 \end{align}
 With this we have verified the third condition of Theorem \ref{Cannings} and may thus conclude that the ancestral processes of the Cannings model we constructed do indeed converge to the $\Lambda$-coalescent.
 
 The case of $a_1=0$ just requires an adaptation of the distribution of $\sigma_{1,N}$.  We define
\begin{align}\label{eq:distsigma_2}
  \sigma_{1,N} &:=_d N^{-2r}\alpha_N \int_{N^{-r/2}}^{N}\delta_{\log(\kappa N)}\frac{\eta(\dd \kappa)}{\alpha_N} +N^{-2r}a_2 \delta_{\log(N^{1+r})} + \left(1- N^{-2r}(\alpha_N + a_2)\right)\delta_{0}.\notag
 \end{align}
 As before, we calculate $c_N$ by conditioning on the possible values of $\sigma_{1,N}$ and obtain
$$c_N \sim N  \omega_NN^{-2r}\int_{N^{-r/2}}^{N}  E\left[\frac{(G_{1,N}^{\kappa})_2}{(G_{1,N}^{\kappa} +N - 1)_2}\right]\eta(\dd \kappa) + N^{-2r} a_2 c^{(1)}_N + 0.$$
Using \eqref{eq:lemma3.1}, \eqref{eq:Y2isthelimit}, and (\ref{EYkappa2}), we get
  \begin{align*}
   c_N	 & \sim N^{1-2r}  \omega_N\left\{\int_0^{\infty}  E\left[Y^2_{\kappa}\right]\eta(\dd \kappa) + a_2 \right\} =  N^{1-2r}  \omega_N\left\{  1-a_2 + a_2 \right\}  = N^{1-2r}  \omega_N,
  \end{align*}
 which converges to 0 as $N\rightarrow \infty$ and therefore the first condition in Theorem \ref{Cannings} holds. Again, the second condition the follows directly from  Lemma \ref{nu12tail}. To obtain the third condition, as before, we condition on the different values of $\sigma_{1,N}$ and obtain
 \begin{align*}
  \lim_{N\rightarrow\infty}\frac{N}{c_N}\P\left(\frac{X_{1,N}}{S_N} > y\right)	
			    & = \lim_{N\rightarrow\infty}\int_{N^{-r/2}}^{N}\P\left(\frac{G_{1,N}^{\kappa}}{G_{1,N}^{\kappa}+N-1} > y\right)\eta(\dd \kappa) \\
			    & \qquad \qquad \qquad \qquad +  \lim_{N\rightarrow\infty}a_2P\left(\frac{G_{1,N}^{+}}{G_{1,N}^{+}+N-1} >y\right).
 \end{align*}	
Using \eqref{eq:Pkappa} and \eqref{eq:Pr_1} we obtain 
 \begin{align*}
  \lim_{N\rightarrow\infty}&\frac{N}{c_N}\P\left(\frac{X_{1,N}}{S_N} > y\right) = \int_0^{\infty}e^{-\frac{1}{\kappa}\frac{y}{1-y}}\eta(\dd \kappa) + a_2 
 \end{align*}
 for every $y \in (0,1)$ as in \eqref{eq:limP} and therefore, for every $x\in (0,1)$, 
 \begin{align*}
   \lim_{N\rightarrow\infty}\frac{N}{c_N}\P\left(\nu_{1,N} > Nx\right) 
	& = \int_x^1 y^{-2}(\Lambda' + a_2 \delta_1)(\dd y)
 \end{align*}
 as in \eqref{eq:limnu}. With this we have verified the third condition of Theorem \ref{Cannings}, which completes the proof.
 \end{proof}

\begin{proof}[Proof of Theorem \ref{charthm}]
Proposition \ref{construction} and Remark \ref{probrem} establish that all measures $\Lambda$ that can be written as in (\ref{decomp}), with the density of $\Lambda'$ being given by (\ref{hdef}), can arise as limits of the ancestral processes in the model introduced in Section \ref{modelsec}.  It remains to show that these are the only measures that can be obtained.

In view of Remark \ref{probrem}, it suffices to consider the scaling in which $\rho_N = 1/c_N$.  Note that if we denote by $\mu_N$ the distribution of $\exp(-\lambda_N(T_N - \tau_{1,N}))$, then
\begin{equation}\label{geomeq}
\P(X_{1,N} > n) = \int_0^1 (1 - p)^m \: \mu_N({\rm d}p), \hspace{.2in}\mbox{for all }m \in \N.
\end{equation}
That is, the distribution of $X_{1,N}$ is a mixture of geometric distributions.  We need to show that if (\ref{geomeq}) holds, 
then the measure $\Lambda$ that appears on the right-hand side of (\ref{X1cond}) must satisfy (\ref{decomp}) and (\ref{hdef}).

For $0 < x < 1$, we have
$$\frac{N}{c_N} \P \bigg( \frac{X_{1,N}}{a_N} > \frac{x}{1 - x} \bigg) = \frac{N}{c_N} \P \bigg(X_{1,N} > \frac{a_N x}{1 - x} \bigg) = \frac{N}{c_N} \int_0^1 (1 - p)^{\lfloor \frac{a_N x}{1-x} \rfloor} \: \mu_N({\rm d}p).$$
We first show that we get minimal contribution to the integral when $p \geq N^{-3/4}$.  Using (\ref{cNlower}),
\begin{align*}
\frac{N}{c_N} \int_{N^{-3/4}}^1 (1 - p)^{\lfloor \frac{a_N x}{1-x} \rfloor} \: \mu_N({\rm d}p) &\leq \frac{N^2(N+1)}{2 \P(S_N > N)} \int_{N^{-3/4}}^1 (1 - p)^{\lfloor \frac{a_N x}{1-x} \rfloor} \: \mu_N({\rm d}p) \\
&\leq \frac{N^2(N+1)}{2 \P(X_{1,N} > 1)} \int_{N^{-3/4}}^1 (1 - p)^{\lfloor \frac{a_N x}{1-x} \rfloor} \: \mu_N({\rm d}p) \\
&= \frac{N^2(N+1)}{2} \frac{\int_{N^{-3/4}}^1 (1 - p)^{\lfloor \frac{a_N x}{1-x} \rfloor} \: \mu_N({\rm d}p)}{\int_{N^{-3/4}}^1 (1 - p) \: \mu_N({\rm d}p)}.
\end{align*}
Because $a_N \geq N$, we have, for all $p \geq N^{-3/4}$,
$$\frac{(1-p)^{\lfloor \frac{a_N x}{1-x} \rfloor}}{1-p} \leq \bigg(1 - \frac{1}{N^{3/4}} \bigg)^{\frac{Nx}{1-x} - 2},$$
and therefore $$\limsup_{N \rightarrow \infty} \frac{N}{c_N} \int_{N^{-3/4}}^1 (1 - p)^{\lfloor \frac{a_N x}{1-x} \rfloor} \: \mu_N({\rm d}p) \leq \limsup_{N \rightarrow \infty} \frac{N^2(N + 1)}{2} \bigg(1 - \frac{1}{N^{3/4}} \bigg)^{\frac{Nx}{1-x} - 2} = 0.$$  It follows that for all $x \in (0,1)$ such that $\Lambda(\{x\}) = 0$, we have
\begin{equation}\label{Lambdalim}
\lim_{N \rightarrow \infty} \frac{N}{c_N} \int_0^{N^{-3/4}} (1 - p)^{\lfloor \frac{a_N x}{1-x} \rfloor} \: \mu_N({\rm d}p) = \int_x^1 y^{-2} \: \Lambda({\rm d}y).
\end{equation}

We claim that for $p < N^{-3/4}$, we can make the approximation $1 - p \approx e^{-p}$, and we therefore assume for now that
\begin{equation}\label{expapprox}
\lim_{N \rightarrow \infty} \frac{N}{c_N} \int_0^{N^{-3/4}} \Big( (1 - p)^{\lfloor \frac{a_N x}{1-x} \rfloor} - e^{-\frac{a_N px}{1-x}} \Big) \: \mu_N({\rm d}p) = 0.
\end{equation}
Then, (\ref{Lambdalim}) implies that for all $x \in (0,1)$ such that $\Lambda(\{x\}) = 0$, we have
$$\lim_{N \rightarrow \infty} \frac{N}{c_N} \int_0^{N^{-3/4}} e^{-\frac{a_N px}{1-x}} \: \mu_N({\rm d}p) = \int_x^1 y^{-2} \: \Lambda({\rm d}y).$$  Define a new measure $\chi_N$ on $(0, \infty)$ to be the push-forward of the restriction of $\mu_N$ to $(0, N^{-3/4})$ by the map $p \mapsto a_N p$, multiplied by $N/c_N$.  Writing $z = x/(1-x)$, we then have
\begin{equation}\label{LapLambda}
\lim_{N \rightarrow \infty} \int_0^{\infty} e^{-\ell z} \: \chi_N(d \ell) = \int_x^1 y^{-2} \: \Lambda({\rm d}y)
\end{equation}
for all $x \in (0,1)$ such that $\Lambda(\{x\}) = 0$.  We claim that this convergence must hold for all $x \in (0,1)$.  To see this, we assume, seeking a contradiction, that $\Lambda(\{x\}) = b > 0$ for some $x > 0$.  Choose $u$ and $v$ such that $0 < u < v < z$.  Choose $C_1$ and $C_2$ such that $e^{-\ell v} \ell \leq C_1 e^{-\ell u}$ for all $\ell \geq 0$ and $\int_0^{\infty} e^{-\ell u} \chi_N({\rm d} \ell) \leq C_2$ for sufficiently large $N$.  Choose $0 < \delta < \min\{z - v, b/(4x^2C_1C_2)\}$.  Then, (\ref{LapLambda}) implies that for sufficiently large $N$, we have
\begin{align*}
\frac{b}{2x^2} &< \int_0^{\infty} \big( e^{-\ell(z - \delta)} - e^{-\ell(z + \delta)} \big) \: \chi_N({\rm d}\ell) \\
&\leq 2 \delta \int_0^{\infty} e^{-\ell(z - \delta)} \ell \: \chi_N({\rm d}\ell) \\
&\leq 2 \delta C_1 \int_0^{\infty} e^{-\ell u} \: \chi_N({\rm d}\ell) \\
&\leq 2 \delta C_1 C_2 \\
&\leq \frac{b}{2x^2},
\end{align*}
which is a contradiction.  Therefore, $\Lambda(\{x\}) = 0$ for all $x > 0$, and thus by (\ref{LapLambda}) the Laplace transforms of the measures $\chi_N$ converge pointwise to a limit on $(0, \infty)$.  By Theorem 8.5 of \cite{bw}, it follows that the measures $\chi_N$ converge vaguely to a limit measure $\chi$, and the pointwise limit of the Laplace transforms of $\chi_N$ is the Laplace transform of $\chi$.  That is, for all $x \in (0, 1)$, we have
$$\int_0^{\infty} e^{-\ell x/(1-x)} \: \chi({\rm d}\ell) = \int_x^1 y^{-2} \: \Lambda({\rm d}y).$$  We now use (\ref{expid})
and change the order of integration to get
$$\int_x^1 \bigg( \int_0^{\infty} \ell \bigg( \frac{y}{1-y} \bigg)^2 e^{-\ell y/(1-y)} \: \chi({\rm d} \ell) \bigg) \: y^{-2} \: {\rm d}y = \int_x^1 y^{-2} \: \Lambda({\rm d}y).$$  Now letting $\eta$ be the push-forward of $\chi$ by the map $x \mapsto 1/x$, we see that the restriction of $\Lambda$ to $(0,1)$ must have density $h$, as given in (\ref{hdef}).  To obtain the integrability condition, we note that because we are assuming $\rho_N = 1/c_N$, the measure
$\Lambda$ must be a probability measure, and therefore $$1 \geq \int_0^1 h(y) \: {\rm d}y = \int_0^{\infty} \int_0^1 y^2 \cdot \frac{1}{\kappa (1-y)^2} e^{-\frac{y}{\kappa(1-y)}} \: {\rm d}y \: \eta({\rm d}\kappa).$$  Letting $W$ have an exponential distribution with mean $1$, the inner integral is $$E \bigg[ \bigg( \frac{\kappa W}{\kappa W + 1} \bigg)^2 \bigg],$$ which is easily seen to be bounded between $C_3(1 \wedge \kappa^2)$ and $C_4(1 \wedge \kappa^2)$ for some positive constants $C_3$ and $C_4$ for all $\kappa > 0$.  This implies that $\int_0^{\infty} (1 \wedge \kappa^2) \: \eta({\rm d} \kappa) < \infty$.

It remains only to establish (\ref{expapprox}).  For $0 < p < 1$, the Taylor expansion $\log(1-p) = -\sum_{n=1}^{\infty} p^n/n$ yields $-p/(1-p) \leq \log(1-p) \leq -p$, and therefore $e^{-p/(1-p)} \leq 1-p \leq e^{-p}$.  It follows that
\begin{equation}\label{squeeze}
e^{-\frac{p^2}{1-p} \frac{a_N x}{1-x}} e^{-\frac{a_N p x}{1-x}} = e^{-\big(\frac{p}{1-p} \frac{a_N x}{1-x} \big)} \leq (1 - p)^{\lfloor \frac{a_N x}{1-x} \rfloor} \leq e^{-p\big( \frac{a_N x}{1-x} - 1\big)} = e^{-\frac{a_N p x}{1-x}} e^p.
\end{equation}
For $p \leq N^{-3/4}$, the upper bound in (\ref{squeeze}) gives $$e^{-\frac{a_N px}{1-x}}  \geq (1 - p)^{\lfloor \frac{a_N x}{1-x} \rfloor} e^{-N^{-3/4}},$$ which, in combination with the finiteness of the right-hand side of (\ref{Lambdalim}), implies that 
\begin{equation}\label{limsup}
\limsup_{N \rightarrow \infty} \frac{N}{c_N} \int_0^{N^{-3/4}} \Big( (1 - p)^{\lfloor \frac{a_N x}{1-x} \rfloor} - e^{-\frac{a_N px}{1-x}} \Big) \: \mu_N({\rm d}p) \leq 0.
\end{equation}
For the bound in the other direction, note that if $p \leq N^{-3/4}$ and $a_N \leq N^{5/4}$, then the lower bound in (\ref{squeeze}) gives $$e^{-\frac{a_N px}{1-x}} \leq (1 - p)^{\lfloor \frac{a_N x}{1-x} \rfloor} e^{\frac{2x}{(1-x)} N^{-1/4}},$$ which, in combination with the finiteness of the right-hand side of (\ref{Lambdalim}), gives that for $a_N \leq N^{5/4}$,
\begin{equation}\label{liminf}
\liminf_{N \rightarrow \infty} \frac{N}{c_N} \int_0^{N^{-3/4}} \Big( (1 - p)^{\lfloor \frac{a_N x}{1-x} \rfloor} - e^{-\frac{a_N px}{1-x}} \Big) \: \mu_N({\rm d}p) \geq 0.
\end{equation}
Now suppose instead $a_N > N^{5/4}$.  If $p \leq N^{1/4}/a_N$ and $p \leq N^{-3/4}$, then $p^2 a_N \leq N^{-1/2}$.  Therefore, the lower bound in (\ref{squeeze}) gives $$e^{-\frac{a_N px}{1-x}} \leq (1 - p)^{\lfloor \frac{a_N x}{1-x} \rfloor} e^{\frac{2x}{(1-x)} N^{-1/2}},$$ and therefore
\begin{equation}\label{bigaN1}
\liminf_{N \rightarrow \infty} \frac{N}{c_N} \int_0^{N^{1/4}/a_N} \Big( (1 - p)^{\lfloor \frac{a_N x}{1-x} \rfloor} - e^{-\frac{a_N px}{1-x}} \Big) \: \mu_N({\rm d}p) \geq 0.
\end{equation}
Finally, to handle the case when $N^{1/4}/a_N < p \leq N^{-3/4}$, note that the assumption $a_N > N^{5/4}$ implies, by part~1 of Lemma \ref{iidXS}, that $\P(S_N \geq \frac{1}{2} N^{5/4}) \geq 1/2$ for sufficiently large $N$, and therefore there is a positive constant $C_5$ such that $c_N \geq C_5/N$.  Therefore,
\begin{equation}\label{bigaN2}
\liminf_{N \rightarrow \infty} \frac{N}{c_N} \int_{N^{1/4}/a_N}^{N^{-3/4}} \Big( (1 - p)^{\lfloor \frac{a_N x}{1-x} \rfloor} - e^{-\frac{a_N px}{1-x}} \Big) \: \mu_N({\rm d}p) \geq \liminf_{N \rightarrow \infty} - \frac{N^2}{C_5} e^{-\frac{x}{1-x} N^{1/4}} = 0.
\end{equation}
Equations (\ref{bigaN1}) and (\ref{bigaN2}) imply that (\ref{liminf}) holds also when $a_N > N^{5/4}$ which, along with (\ref{limsup}), implies that (\ref{expapprox}) holds.
\end{proof}

\bigskip
\noindent {\bf {\Large Acknowledgments}}

\bigskip
\noindent The authors thank Jochen Blath for bringing to their attention the reference \cite{wv19}.  They also thank Anton Wakolbinger for a fruitful discussion over Zoom during the Bernoulli-IMS One World Symposium.
FC was supported by the Deutsche Forschungsgemeinschaft (CRC 1283 ``Taming Uncertainty'',
Project C1).  AGC was supported in part by CONACYT CIENCIA BASICA A1-S-14615.  JS was supported in part by NSF Grant DMS-1707953.

\end{document}